\PassOptionsToPackage{unicode}{hyperref}
\PassOptionsToPackage{hyphens}{url}
\documentclass[
  11pt,
  letterpaper]{siamart251216}
\usepackage{xcolor}
\usepackage[margin=1in]{geometry}
\usepackage{amsmath,amssymb}
\usepackage{iftex}
\ifPDFTeX
  \usepackage[T1]{fontenc}
  \usepackage[utf8]{inputenc}
  \usepackage{textcomp} % provide euro and other symbols
\else % if luatex or xetex
  \usepackage{unicode-math} % this also loads fontspec
  \defaultfontfeatures{Scale=MatchLowercase}
  \defaultfontfeatures[\rmfamily]{Ligatures=TeX,Scale=1}
\fi
\usepackage{lmodern}
\ifPDFTeX\else
\fi
\IfFileExists{upquote.sty}{\usepackage{upquote}}{}
\IfFileExists{microtype.sty}{% use microtype if available
  \usepackage[]{microtype}
  \UseMicrotypeSet[protrusion]{basicmath} % disable protrusion for tt fonts
}{}
\makeatletter
\@ifundefined{KOMAClassName}{% if non-KOMA class
  \IfFileExists{parskip.sty}{%
    \usepackage{parskip}
  }{% else
    \setlength{\parindent}{0pt}
    \setlength{\parskip}{6pt plus 2pt minus 1pt}}
}{% if KOMA class
  \KOMAoptions{parskip=half}}
\makeatother
\makeatletter
\ifx\paragraph\undefined\else
  \let\oldparagraph\paragraph
  \renewcommand{\paragraph}{
    \@ifstar
      \xxxParagraphStar
      \xxxParagraphNoStar
  }
  \newcommand{\xxxParagraphStar}[1]{\oldparagraph*{#1}\mbox{}}
  \newcommand{\xxxParagraphNoStar}[1]{\oldparagraph{#1}\mbox{}}
\fi
\ifx\subparagraph\undefined\else
  \let\oldsubparagraph\subparagraph
  \renewcommand{\subparagraph}{
    \@ifstar
      \xxxSubParagraphStar
      \xxxSubParagraphNoStar
  }
  \newcommand{\xxxSubParagraphStar}[1]{\oldsubparagraph*{#1}\mbox{}}
  \newcommand{\xxxSubParagraphNoStar}[1]{\oldsubparagraph{#1}\mbox{}}
\fi
\makeatother

\usepackage{longtable,booktabs,array}
\usepackage{calc} % for calculating minipage widths
\usepackage{etoolbox}
\makeatletter
\patchcmd\longtable{\par}{\if@noskipsec\mbox{}\fi\par}{}{}
\makeatother
\IfFileExists{footnotehyper.sty}{\usepackage{footnotehyper}}{\usepackage{footnote}}
\makesavenoteenv{longtable}
\usepackage{graphicx}
\makeatletter
\newsavebox\pandoc@box
\newcommand*\pandocbounded[1]{% scales image to fit in text height/width
  \sbox\pandoc@box{#1}%
  \Gscale@div\@tempa{\textheight}{\dimexpr\ht\pandoc@box+\dp\pandoc@box\relax}%
  \Gscale@div\@tempb{\linewidth}{\wd\pandoc@box}%
  \ifdim\@tempb\p@<\@tempa\p@\let\@tempa\@tempb\fi% select the smaller of both
  \ifdim\@tempa\p@<\p@\scalebox{\@tempa}{\usebox\pandoc@box}%
  \else\usebox{\pandoc@box}%
  \fi%
}
\def\fps@figure{htbp}
\makeatother

\providecommand{\tightlist}{%
  \setlength{\itemsep}{0pt}\setlength{\parskip}{0pt}}

\usepackage[numbers]{natbib}
\usepackage{amsmath}
\usepackage{amssymb}
\usepackage{mathtools}
\usepackage{enumitem}

\usepackage{booktabs}
\usepackage{longtable}
\usepackage{array}
\usepackage{makecell}
\usepackage{caption}
\usepackage{tikz}
\usetikzlibrary{shapes.geometric, arrows}
\usepackage[ruled,vlined,linesnumbered,algo2e]{algorithm2e}

\newsiamremark{remark}{Remark}

\usepackage{acro}
\acsetup{
  make-links = false,
  pages/display = first
}
\DeclareAcronym{SC}{
    short = {SC},
    long = {stochastic co-spectrality},
    tag = {abbrev},
}
\DeclareAcronym{SSI}{
    short = {SSI},
    long = {stochastic spectral identifiability},
    tag = {abbrev},
}
\DeclareAcronym{ROSA}{
    short = {ROSA},
    long = {Robust Order-aware Spectral Amplification},
    tag = {abbrev},
}
\DeclareAcronym{ROSA-S}{
    short = {ROSA-S},
    long = {\acl{ROSA}-single},
    tag = {abbrev},
}
\DeclareAcronym{ROSA-C}{
    short = {ROSA-C},
    long = {\acl{ROSA}-contrastive},
    tag = {abbrev},
}
\DeclareAcronym{PH}{
    short = {PH},
    long = {persistent homology},
    tag = {abbrev},
}
\DeclareAcronym{SNR}{
    short = {SNR},
    long = {signal to noise ratio},
    tag = {abbrev},
}
\DeclareAcronym{IS2}{
    short = {IS2},
    long = {inverse coefficient of variation stability score},
    tag = {abbrev},
}
\DeclareAcronym{CTPL}{
    short = {CTPL},
    long = {calibrated tempered power-law},
    tag = {abbrev},
}
\DeclareAcronym{JCP}{
    short = {JCP},
    long = {Jacobian-coherence priority},
    tag = {abbrev},
}
\DeclareAcronym{MST}{
    short = {MST},
    long = {minimum spanning tree},
    tag = {abbrev},
}
\DeclareAcronym{MSTA}{
    short = {MSTA},
    long = {augmented minimum spanning tree},
    tag = {abbrev},
}
\DeclareAcronym{BLAS}{
    short = {BLAS},
    long = {Basic Linear Algebra Subprograms},
    tag = {abbrev},
}
\DeclareAcronym{SIMD}{
    short = {SIMD},
    long = {single instruction, multiple data},
    tag = {abbrev},
}
\DeclareAcronym{ROI}{
    short = {ROI},
    long = {region of interest},
    tag = {abbrev},
}
\DeclareAcronym{PDB}{
    short = {PDB},
    long = {Protein Data Bank},
    tag = {abbrev},
}

\makeatletter
\@ifpackageloaded{caption}{}{\usepackage{caption}}
\AtBeginDocument{%
\ifdefined\contentsname
  \renewcommand*\contentsname{Table of contents}
\else
  \newcommand\contentsname{Table of contents}
\fi
\ifdefined\listfigurename
  \renewcommand*\listfigurename{List of Figures}
\else
  \newcommand\listfigurename{List of Figures}
\fi
\ifdefined\listtablename
  \renewcommand*\listtablename{List of Tables}
\else
  \newcommand\listtablename{List of Tables}
\fi
\ifdefined\figurename
  \renewcommand*\figurename{Figure}
\else
  \newcommand\figurename{Figure}
\fi
\ifdefined\tablename
  \renewcommand*\tablename{Table}
\else
  \newcommand\tablename{Table}
\fi
}
\@ifpackageloaded{float}{}{\usepackage{float}}
\floatstyle{ruled}
\@ifundefined{c@chapter}{\newfloat{codelisting}{h}{lop}}{\newfloat{codelisting}{h}{lop}[chapter]}
\floatname{codelisting}{Listing}

\makeatother
\makeatletter
\@ifpackageloaded{caption}{}{\usepackage{caption}}
\@ifpackageloaded{subcaption}{}{\usepackage{subcaption}}
\makeatother
\usepackage{bookmark}
\IfFileExists{xurl.sty}{\usepackage{xurl}}{} % add URL line breaks if available
\hypersetup{
  pdftitle={ROSA: Metric Amplification on Noisy Graphs with Theoretical Guarantees for Amplified Spectral Distances},
  pdfauthor={Ben Cardoen; Fabian Spill},
  hidelinks,
  pdfcreator={LaTeX via pandoc}}

\title{ROSA: Metric Amplification on Noisy Graphs with Theoretical
Guarantees for Amplified Spectral Distances}
\author{Ben Cardoen \and Fabian Spill}
\date{2026-07-30}
\begin{document}
\maketitle
\begin{abstract}
In many applications, graphs are observed repeatedly, where the task is
to observe and track weak localized structural perturbations under
noise, such as vertex-coordinate displacements or edge-weight/attribute
changes on a fixed vertex set. One-shot graph distances can miss these
localized perturbations or fluctuate too strongly to support reliable
monitoring, especially in large graphs where signal dilutes with scale.
We propose \ac{ROSA}, a distance-amplification operator that integrates
metric evaluations along an order-aware edge-removal filtration. We
prove that \ac{ROSA} cannot reduce the included base distance and
conditionally increases it whenever a filtered step exposes additional
signal; separately, we prove that it can strictly improve an \ac{IS2},
defined as the mean of a noisy graph distance divided by its standard
deviation, under an explicit sufficient condition that can be checked
empirically. Experiments on synthetic graphs with localized edits under
three noise models, varying graph densities, and three algorithmically
selected edit sites show that \ac{ROSA} can often approximately double
the \ac{IS2} of the base spectral distance. We show where \ac{ROSA}
works and stops working on real-world use cases including mitochondrial
networks, fMRI correlation matrices, tissue networks, protein conformer
graphs, and retinal vasculature multiplex graphs. The empirical results
across graph, operator, and noise settings are consistent with the
theoretical amplification conditions and boundary constructions.
Evaluated with empirically estimated operative quantities, the bound
agrees with the observed gain in direction and approximate scale.
\end{abstract}

\begin{keywords}
graph metrics, spectral distances, graph filtration, inverse coefficient of variation, edge ranking, metric amplification
\end{keywords}

\begin{MSCcodes}
05C50, 05C82, 15A18, 62R40
\end{MSCcodes}

\section{Introduction}\label{sec-intro-rosa}

Global graph metrics are widely used to compare networks and quantify
differences \cite{Chung1997,Spielman2007,SpielmanTeng2007}, supporting
tasks from community detection \cite{VonLuxburg2007} to graph similarity
and anomaly detection \cite{Koutra2016DeltaCon,Tantardini2020}. In many
practical workflows, however, the aim is not just to distinguish two
networks once, but to observe and track weak localized structural
perturbations under repeated noisy observation. In this paper, such
perturbations are typically vertex-coordinate displacements or
edge-weight/attribute changes on a fixed vertex set, rather than
unrestricted graph-edit-distance operations. This setting arises in
dynamic monitoring, repeated sampling, and stochastic perturbation,
where the relevant signal may be small relative to both graph size and
measurement noise. Direct one-shot graph distances can be sensitive to
two related fragilities in this regime. Classical perturbation bounds
show that eigenvalue displacement under structured perturbations can be
tight \cite{StewartSun1990,Bhatia1997}, while even modest localized
perturbations can yield negligible global spectral change when the
perturbation is diluted across many eigenvalues. Under
moderate-to-heavy-tailed noise, this signal dilution can become severe
enough in practice that distances between genuinely distinct graphs are
hard to separate from noise. The practical need is therefore not only a
graph distance, but a way to amplify weak localized signal without
changing the underlying comparison target.

Several existing approaches address adjacent parts of this problem.
Graph kernels and spectral descriptors compare graphs through
eigenvalues, heat kernels, or diffusion summaries
\cite{Vishwanathan2010,Kondor2016,Tsitsulin2018NetLSD}. These methods
provide useful comparison summaries, but they are not designed
specifically to amplify weak localized signal under noise. Metric-based
methods such as DeltaCon \cite{Koutra2016DeltaCon} provide principled
baselines but remain sensitive to noise. Persistent-homology methods
track topological events across filtrations and enjoy strong stability
guarantees \cite{CohenSteiner2007}, while persistent Laplacians enrich
filtered complexes with spectral information
\cite{MemoliWanWang2022PersistentLaplacians,GulenMemoliWanWang2023PersistentLaplacianMaps}.
Spectral sparsification preserves spectra under graph reduction
\cite{spielman2011spectral}, while dynamic spectral monitoring methods
track evolving graphs over time \cite{Aleardi2018SpectralDistortion}.
Recent work also studies online network change-point localization
directly from sequential graph observations
\cite{YuPadillaWangRinaldo2024}. This downstream monitoring task is
complementary to our setting rather than a detector-level benchmark
considered here. Geometric graph distances such as fGOT provide
alternative transport-based comparison frameworks
\cite{Maretic2021fGOT}. These methods are valuable, but they do not
provide an operator whose explicit purpose is to amplify an existing
base distance under noise.

Here we propose an operator that acts on a chosen base graph metric by
integrating it along an order-aware edge-removal filtration. We make
five contributions:

\begin{itemize}
\tightlist
\item
  introduce \ac{ROSA}, an operator on graph distances that amplifies a
  base metric by integrating it along an order-aware edge-removal
  filtration.
\item
  prove a conditional amplification bound for spectral distances: when
  cumulative mean growth dominates variance growth along the filtration,
  \ac{ROSA} improves the inverse coefficient-of-variation stability
  score of the base metric.
\item
  identify structural boundary regimes for this amplification mechanism,
  including an explicit weighted-star instantiation, a sharp
  obstruction-by-symmetry case, and dense graph settings in which local
  filtration structure collapses.
\item
  validate \ac{ROSA} empirically across synthetic graph families,
  multiple noise models, graph densities, and alternative operators,
  showing broad but non-universal amplification.
\item
  show that the same mechanism transfers to real graph geometries,
  including controlled biological trajectories and a fixed-topology
  protein conformer benchmark, while also clarifying where the gain
  depends on noise scale, graph structure, and filtration design.
\end{itemize}

Because \ac{ROSA} acts on a base distance, it can in principle be
inserted into pipelines that rely on graph dissimilarity measures,
including graph kernel classification \cite{Vishwanathan2010}, graph
generative model evaluation via spectral MMD
\cite{You2018GraphRNN,Liao2019GRAN}, and dynamic network monitoring.
This paper studies amplification itself: when and why an order-aware
filtration can strengthen a base graph distance under noise, when it
fails, and how these regimes appear in synthetic and real graph
geometries. We combine conditional theory, explicit boundary
constructions, and controlled experiments on synthetic, biological, and
connectome-derived graphs. Notation used throughout is summarized in
Appendix Table \ref{tbl:notation-summary}. All code, data, and
reproduction scripts are publicly available in the paper
repository.\footnote{\url{https://github.com/systems-mechanobiology/ROSA}.}

\section{Methods}\label{sec-methods}

\subsection{ROSA-C Amplification}\label{sec-filtration}

Given a base graph distance
\(d: \mathcal{G} \times \mathcal{G} \to \mathbb{R}_{\geq 0}\) and an
edge-ranking heuristic that assigns a score \(f(e,X)\) to each edge
\(e \in E_X\) of a graph \(X\), \ac{ROSA} acts as an operator on
distances, removing edges according to a precomputed graph-wise ordering
and accumulating base metric evaluations along a filtration trajectory.
For two graphs \(G=(V,E_G)\) and \(H=(V,E_H)\) on a common vertex set,
let \(\pi^G=(e_G^{(1)},\dots,e_G^{(|E_G|)})\) and
\(\pi^H=(e_H^{(1)},\dots,e_H^{(|E_H|)})\) be the edge orderings obtained
by sorting the scores \(f(e,G)\) and \(f(e,H)\) in decreasing order,
with any ties resolved by the implementation's graph-wise edge
enumeration and sort behavior. Fix an integer filtration depth \(m\)
with \(0 \le m \le \min(|E_G|,|E_H|)\). For \(k=0,\dots,m\), define
\(G_k=(V,E_G \setminus \{e_G^{(1)},\dots,e_G^{(k)}\})\) and
\(H_k=(V,E_H \setminus \{e_H^{(1)},\dots,e_H^{(k)}\})\), with \(G_0=G\)
and \(H_0=H\). The induced trajectory is the ordered vector
\(\mathcal{T}_C(G,H) = (d(G_k,H_k))_{k=0}^{m}\), and the amplified
distance is \(d^{\mathrm{ROSA}}_f(G,H) := \sum_{k=0}^{m} d(G_k, H_k)\).
In this work, we instantiate \(d\) with the Wasserstein-2 spectral
distance on combinatorial Laplacian eigenvalues
\cite{santambrogio2015optimal}. Given eigenvalue multisets
\(\boldsymbol{\lambda} = \{\lambda_1,\dots,\lambda_n\}\) and
\(\boldsymbol{\lambda}' = \{\lambda'_1,\dots,\lambda'_n\}\) of graphs
\(G\) and \(G'\) on the same vertex set, define empirical spectral
measures \(\mu_G = \frac{1}{n}\sum_i \delta_{\lambda_i}\) and
\(\mu_{G'} = \frac{1}{n}\sum_j \delta_{\lambda'_j}\). The spectral
distance is then \(d_{\mathrm{SD}}(G,G') = W_2(\mu_G,\mu_{G'})\), where
\(W_2\) is the one-dimensional Wasserstein-2 distance; for empirical
measures on \(\mathbb{R}\) with matched mass,
\(W_2^2(\mu_G,\mu_{G'}) = \frac{1}{n}\sum_{i=1}^n (\lambda_{(i)}-\lambda'_{(i)})^2\),
where \(\lambda_{(i)}\) and \(\lambda'_{(i)}\) are the sorted
eigenvalues \cite{santambrogio2015optimal}. The framework is compatible,
by construction, with any base distance or dissimilarity that is
sensitive to edge perturbations (e.g., NetLSD
\cite{Tsitsulin2018NetLSD}, DeltaCon \cite{Koutra2016DeltaCon}, or
distances derived from graph kernels \cite{Vishwanathan2010}). To
compare two graphs \(G\) and \(H\), \ac{ROSA} filters both in lockstep,
computing the base distance at each step. Each graph receives its own
graph-wise edge ordering: \(\pi^G\) is induced by \(f(\cdot,G)\) and
\(\pi^H\) by \(f(\cdot,H)\). The orderings are not shared, coupled, or
conditioned on each other; the heuristic \(f\) is applied to each graph
separately (Algorithm \ref{alg:rosa-c}). The only alignment is in the
step index \(k\), not in the ordering itself. In implementations, the
depth may be specified by an integer retained-edge delimiter \(q\) with
\(0 \le q \le \min(|E_G|,|E_H|)\), giving \(m = \min(|E_G|, |E_H|) - q\)
(Algorithm \ref{alg:rosa-c}); for equal-edge comparisons this leaves
\(q\) edges in each graph after the final removal step. For the metric
theorem below, \(m\) rather than a global \(q\) is the fixed parameter
across the graph class being compared. For the spectral \(W_2\) distance
used here, we choose the combinatorial Laplacian. In contrast, the
normalized Laplacian adds graph-specific, step-dependent degree
rescaling, so it would require separate justification in a cumulative
filtration comparison. The metric theorem applies when the chosen base
distance is a genuine metric; the spectral instantiation used in the
experiments is a spectral dissimilarity on graphs and can be a
pseudometric when non-identical graphs are Laplacian-cospectral.

\subsection{\texorpdfstring{\ac{JCP}
ordering}{ ordering}}\label{sec-jacobian}

For each edge \(e=(i,j)\), the local sensitivity of the combinatorial
Laplacian \(L=D-A\) to an infinitesimal change in edge weight is
captured by the Jacobian
\(B_{ij}=(\mathbf{e}_i-\mathbf{e}_j)(\mathbf{e}_i-\mathbf{e}_j)^{\top}\),
where \(\mathbf{e}_i\) is the \(i\)th standard basis vector. This is a
rank-one update describing how \(L\) changes with \(w_{ij}\). Let
\(N_k(i)\) denote the vertices within at most \(k\) hops of \(i\) and
set \(U_e^{(k)}=N_k(i)\cup N_k(j)\). To quantify how strongly this
virtual perturbation aligns with the existing local spectral structure,
define \[
\rho_e^{(k)}=\frac{\langle B_{ij}[U_e^{(k)}],\,L[U_e^{(k)}]\rangle_F}
{\|B_{ij}[U_e^{(k)}]\|_F\,\|L[U_e^{(k)}]\|_F}\in[-1,1],
\label{eq:jcp-score}
\] where \(\langle\cdot,\cdot\rangle_F\) and \(\|\cdot\|_F\) denote the
Frobenius inner product and norm, respectively, and brackets denote the
principal submatrix restricted to the indicated neighbourhood. The
implementation exposes this neighbourhood size as the \texttt{radius}
argument; the default \ac{JCP} ordering uses \(U_e=U_e^{(1)}\) and
\(\rho_e=\rho_e^{(1)}\), and we write \(\mathrm{JCP}_k\) for radius
\(k\). Edges are sorted by decreasing \(\rho_e\), matching the
implementation's removal order, so high-coherence edges are removed
earlier in the filtration. \ac{JCP} is a spectrum-aware and
computationally efficient heuristic for ordering edges: it is directly
coupled to local spectral structure, uses only local neighbourhood
computations, and is designed to respond to localized changes. Unless
stated otherwise, the experiments use the radius-one definition. The
effect of \(k>1\) using \(\mathrm{JCP}_k\) is quantified in Figure
\ref{fig:retina-multiplex-jcp}. For a fixed edge with nonnegative
weights, increasing \(k\) leaves the numerator unchanged and cannot
decrease the denominator in Equation \ref{eq:jcp-score}; the raw score
is therefore non-increasing. This does not imply that the edge ranking
is fixed, because different edges can dilute at different rates, as
highlighted by the multiplex-radius experiment.

\subsection{IS2 as the Stability Criterion}\label{sec-is2}

\begin{definition}[Metric IS2]\label{def:metric-snr}
Let $d(G, H)$ be a distance metric between two graphs subject to a joint noise process $\eta=(\eta_G,\eta_H)$.
We define the inverse coefficient of variation stability score (\ac{IS2}) of the metric as
$$
\mathrm{IS2}_d = \frac{\mathbb{E}_\eta[d(G^{\eta_G}, H^{\eta_H})]}{\sqrt{\mathrm{Var}_\eta(d(G^{\eta_G}, H^{\eta_H}))}},
\label{eq:is2-definition}
$$
where $G^{\eta_G}, H^{\eta_H}$ denote the noisy graphs.
\end{definition}

This inverse-CV-style score quantifies the stability of the observed
graph-comparison signal relative to measurement uncertainty. We use it
as a stability or observability index rather than as a classical
engineering \ac{SNR}, in a spirit closer to normalized uncertainty
quantification than to power-based signal-processing conventions
\cite{WithersNadarajah2012,BrownGoodwinSorger2001}.

For a base metric \(d\) and its amplified counterpart
\(d_f^{\mathrm{ROSA}} = \mathcal{A}_f(d)\), we define the amplification
ratio \(R := \mathrm{IS2}_{\mathrm{ROSA}} / \mathrm{IS2}_d\). Thus
\(R>1\) means that amplification improves the stability score of the
base metric.

\subsection{Stochastic ROSA}\label{sec-stochastic-rosa}

In real world conditions it is likely ordering-stability issues emerge
when ranking edges: if two edges receive sufficiently similar heuristic
scores, noise can make their relative order unstable. This matters
because \ac{ROSA} compares graphs along their independently induced
filtrations, so local ordering uncertainty can propagate into the
accumulated trajectory; see Section \ref{sec-theory-rosa} for the role
of ordering stability in the theoretical assumptions and Section
\ref{sec-results-stochastic} for the empirical stochastic analysis.
Under a noisy edge-score model \(S'(e) = S(e) + \eta(e)\) with
i.i.d.~symmetric score noise, the probability of a pairwise order flip
between edges with score gap \(g > 0\) is
\(\mathbb{P}_{\text{flip}} = 1 - F_Z(g)\), where \(Z = \eta_x - \eta_y\)
is the induced symmetric difference variable. This probability is
strictly decreasing in \(g\): edges with similar scores are most
vulnerable to reordering. To make the filtration robust against such
noise-induced local reorderings, we employ a stochastic filtration that
randomizes adjacent ranks in the sorted edge list. In practice, the
adjacent-swap probability \(p_{\mathrm{swap}}\) is used as a tunable
surrogate for local ordering instability rather than being estimated
directly from \(\mathbb{P}_{\text{flip}}\). Given \(B\) randomized
orderings \(\widetilde{\Pi}^{(1)},\dots,\widetilde{\Pi}^{(B)}\), the
implementation applies independent fixed-parity adjacent swaps to
positions \((1,2),(3,4),\dots\) in each graph's precomputed ordering. It
then reports Monte Carlo means, standard deviations, and empirical
\([0.025,0.975]\) quantiles of the scalar \ac{ROSA} sums and violation
rates; stored trajectories are diagnostic outputs rather than the
primary aggregate. This nonparametrically probes a restricted
distribution of local rank permutations without requiring explicit
knowledge of the noise parameters. Stochastic \ac{ROSA} marginalizes the
\ac{ROSA} functional over a restricted class of local rank
perturbations: adjacent-swap perturbations of the deterministic edge
ordering. It approximates
\(\Phi_{\mathrm{rob}}(G,H) := \mathbb{E}_{\Pi \sim \mathcal{D}_{\text{local}}}[\Phi(G,H;\Pi)]\),
where \(\Phi\) is the \ac{ROSA} trajectory integral and
\(\mathcal{D}_{\text{local}}\) is the distribution induced by pairwise
adjacent swaps. This should be interpreted as: under benign
ordering-noise assumptions, how stable the amplified graph-comparison
signal remains once ordering uncertainty is accounted for. Stochastic
\ac{ROSA} does not estimate the true permutation distribution over all
possible edge orderings, and pathological global reorderings, where the
entire ranking is scrambled, are explicitly out of scope. The algorithm
only considers non-overlapping adjacent swaps, not arbitrary reorderings
with large index jumps, because the latter would rapidly induce
prohibitive computational costs.

\subsection{Complexity and Scalability}\label{sec-complexity}

The cost of \ac{ROSA} is dominated by repeated base-metric evaluations
along the filtration. For the \(W_2\) instantiation, the expensive step
is the eigendecomposition of the combinatorial Laplacian at successive
filtration states. This makes scalability depend on three complementary
levers: reducing the number of evaluated filtration steps, reducing the
cost per spectral evaluation, and reducing wall-clock time through
parallel execution. Table \ref{tbl:complexity-options} summarizes the
main options used in this paper; Section \ref{sec-results-scaling}
reports their empirical scaling behavior.

\begin{table}[t]
\centering
\caption{Complexity and scalability options for ROSA with spectral $W_2$. Here $n$ is the number of vertices, $m$ the filtration depth, $s$ the stride, $L$ the retained filtration length, $S$ the checkpoint stride for the incremental eigensolver, $r \in (0,1]$ the retained spectral fraction, and $P$ the number of workers.}
\label{tbl:complexity-options}
\small
\renewcommand{\arraystretch}{1.12}
\begin{tabular}{@{}>{\raggedright\arraybackslash}p{0.18\textwidth} >{\raggedright\arraybackslash}p{0.34\textwidth} >{\raggedright\arraybackslash}p{0.42\textwidth}@{}}
\toprule
Variant & Cost / effect & Comment \\
\midrule
Full ROSA & $O(m \cdot n^3)$ & Full eigendecomposition at every filtration step \\
Strided ROSA & $O(\lceil m/s \rceil \cdot n^3)$ & Evaluate every $s$-th step only \\
Truncated filtration & $O(L \cdot n^3)$ & Keep early, high-information part of the trajectory \\
Incremental eigensolver & \makecell[l]{$O\!\left(n^3 + \lfloor L/S \rfloor n^3\right.$ \\ $\left.\quad + (L-\lfloor L/S \rfloor)n^2\right)$} & Approximate rank-1 secular updates between checkpoints; validate against full spectra \\
Top-$r$ fraction / sparse eigensolver & solver-dependent, below full spectrum when converged & Exploits spectral localization; runtime depends on tolerance, gaps, and backend \\
Parallel execution & ideal $\approx 1/P$ wall-clock reduction & Independent after masks/orderings are fixed; speedup is limited by BLAS/cache/memory contention \\
\bottomrule
\end{tabular}
\end{table}

A first strategy is striding, because a full \(m\)-step trajectory is
not always necessary. With stride \(s \geq 1\), the strided \ac{ROSA}
distance is
\(d^{\text{ROSA}}_{f,s}(G,H) := \sum_{j=0}^{\lfloor m/s \rfloor} d(G_{js}, H_{js})\),
where \(G_{js}, H_{js}\) are the filtration states after \(js\) edge
removals and \(s=1\) recovers ROSA-C. Empirically (Figure
\ref{fig:scaling}, panels C--D), much of the amplification signal is
concentrated in the early filtration steps, so moderate stride and
truncation can reduce cost while preserving \(R > 1\). Next, we can
exploit that for spectral-based distances such as \(W_2\), each edge
removal is a rank-1 perturbation of the Laplacian. We therefore
implement an incremental strategy: full eigendecompositions are
refreshed only every \(S\) steps, while intermediate steps use a secular
equation update \cite{gu1995divide}. For \(S>1\), intermediate spectra
are approximate because eigenvectors are refreshed only at checkpoints;
the benchmark records agreement of the integrated \ac{ROSA} value with
sequential eigendecomposition in the tested regime. For a retained
filtration of length \(L\), a coarse runtime model is
\(T(n, L, S) = T_{\mathrm{setup}}(n) + \lfloor L/S \rfloor \cdot T_{\mathrm{eigen}}(n) + (L - \lfloor L/S \rfloor) \cdot T_{\mathrm{secular}}(n)\),
with \(T_{\mathrm{setup}} = O(n^3)\), \(T_{\mathrm{eigen}} = O(n^3)\),
and \(T_{\mathrm{secular}} = O(n^2)\).

The third lever is wall-clock reduction through parallelism: once the
edge orderings and edge masks are fixed, selected filtration states or
contiguous step blocks can be reconstructed independently and their
distance contributions integrated. In practice, dense
\acs{BLAS}-accelerated eigensolvers quickly saturate because each worker
may also spawn internal \acs{BLAS} threads. Even when those threads are
controlled, cache, \acs{SIMD}, and memory-bandwidth contention can limit
speedup. Process-level parallelism with separate worker processes and
controlled \acs{BLAS} pools is therefore the more reliable route, but it
still remains hardware- and workload-dependent. Finally, if spectral
localization is strong and known, sparse iterative top-fraction
eigensolvers provide an additional approximation route by reducing the
number of eigenvalues computed per step; convergence and runtime depend
on the tolerance, spectral gaps, retained fraction, and backend. All
these tradeoffs are evaluated empirically in Figure \ref{fig:scaling}.

\subsection{Theoretical Properties of ROSA}\label{sec-theory-rosa}

\subsubsection{Perturbation Bounds and Filtration
Intuition}\label{sec-regimes}

Classical perturbation bounds provide complementary intuition for when a
filtration may preserve local spectral structure, although neither bound
alone guarantees amplification. For a Hermitian perturbation
\(L' = L + E\), the Weyl inequalities \cite{horn2012matrix} bound each
ordered eigenvalue displacement by the operator norm:
\(|\lambda_i(L') - \lambda_i(L)| \le \|E\|_2\). This worst-case bound is
structure-blind: it depends only on the largest singular value of the
perturbation, regardless of how \(E\) is distributed across the graph.
When \(\|E\|_2\) is comparable to local spectral gaps or to the score
margins that determine the edge ordering, eigenspaces and induced local
edge rankings can become unstable \cite{DavisKahan1970}, which is
unfavorable for filtration-based amplification.

By contrast, the Hoffman--Wielandt inequality controls aggregate squared
eigenvalue displacement through the Frobenius norm,
\(\sum_i |\lambda_i(L')-\lambda_i(L)|^2 \le \|E\|_F^2\)
\cite{HoffmanWielandt1953}. This perspective can reflect perturbation
energy distributed across many localized components rather than only one
extremal direction, but it still does not imply amplification.
Filtration can benefit from those components only when they remain
locally resolvable and the edge-ranking heuristic continues to induce a
meaningful ordering. The \ac{JCP} heuristic (Section \ref{sec-jacobian})
is intended to bias the filtration toward removals whose perturbation
directions align with local spectral structure; a large
Jacobian-coherence score \(\rho_e\) indicates that the edge update
\(B_{ij}\) is well matched to the local Laplacian block \(L[U_e]\). This
interpretation remains heuristic outside special cases, but it is
consistent with the empirical boundary in Section
\ref{sec-results-fmri-boundary}: amplification is strongest when local
structure is preserved and can collapse on dense near-uniform graphs
where edge neighbourhoods cease to be local. A detailed spectral
analysis of structured perturbations under geometric vertex noise is
given in \cite{cardoen2026spectral}.

\subsubsection{IS2 Amplification Condition}\label{sec-snr-amplification}

For permutations \(\pi, \rho\) of an edge index set \(E\), let
\(d_K(\pi, \rho)\) denote the Kendall--tau distance (number of
discordant pairs). Given an edge-ranking heuristic \(f\) that induces a
permutation \(\pi^X\) for each graph \(X\), we define the signal-induced
filtration-path divergence
\(\Delta_f^{\mathrm{signal}} := d_K(\pi^G, \pi^H)\) and the
noise-induced filtration-path divergence
\(\Delta_f^{\mathrm{noise}} := \max\bigl\{\mathbb{E}_\eta[d_K(\pi^{G^{\eta_G}}, \pi^G)],\; \mathbb{E}_\eta[d_K(\pi^{H^{\eta_H}}, \pi^H)]\bigr\}\).

These definitions make the sensitivity condition precise: the heuristic
is effective when
\(\Delta_f^{\mathrm{signal}} > \Delta_f^{\mathrm{noise}}\), i.e., the
structural edit displaces the edge ordering more than noise does. Using
the \ac{IS2} score from \eqref{eq:is2-definition} and the amplification
ratio \(R\), we now state the operative amplification proposition.

\begin{proposition}[Operative Conditions for ROSA IS2 Amplification]\label{thm:snr-guarantee}
Let $G$ and $H$ be weighted graphs on the same vertex set and fixed potential edge set $E$, and let $d$ be a measurable nonnegative graph dissimilarity.
Let $m$ be a fixed filtration depth with $0 \le m \le |E|$, and let $f$ be an edge-ranking heuristic that,
for any graph $X$, induces a permutation $\pi^X$ of $E$.
For a graph $X$ and $k\in\{0,\dots,m\}$, let $X_k$ denote the subgraph obtained from $X$ by removing
the first $k$ edges in the ordering $\pi^X$.
Define the ROSA-C dissimilarity $d^{\mathrm{ROSA}}_f(G,H) := \sum_{k=0}^{m} d(G_k,H_k)$.
Let $\eta=(\eta_G,\eta_H)$ be a joint noise process on graph pairs, and write $G^{\eta_G}$ and $H^{\eta_H}$
for the noisy graphs; in the experiments the two components are sampled independently unless stated otherwise.
The filtration is applied after noise is sampled: for each realization, $(G^{\eta_G})_k$ and $(H^{\eta_H})_k$ are obtained by ranking and removing edges from $G^{\eta_G}$ and $H^{\eta_H}$ respectively.
For any graph dissimilarity $\delta$, write $\mu_\delta := \mathbb{E}_\eta[\delta(G^{\eta_G},H^{\eta_H})]$,
$\sigma_\delta := \sqrt{\mathrm{Var}_\eta(\delta(G^{\eta_G},H^{\eta_H}))}$, and assume these quantities are finite with positive variance for $\delta \in \{d, d^{\mathrm{ROSA}}_f\}$.
Instantiating $\delta \in \{d,\, d^{\mathrm{ROSA}}_f\}$ gives the base-metric quantities
$(\mu_{SD}, \sigma_{SD}, \mathrm{IS2}_{SD})$ and ROSA quantities
$(\mu_{\mathrm{ROSA}}, \sigma_{\mathrm{ROSA}}, \mathrm{IS2}_{\mathrm{ROSA}})$ respectively.
Assume the following conditions hold for constants $\mu_0>0$, $\beta>0$, and $\gamma>0$:

\begin{enumerate}[label=\textup{(A\arabic*)},ref=A\arabic*,leftmargin=*,itemsep=6pt]
\item\label{a:signal} \textbf{Non-degenerate base signal.}
$\mu_{SD} \ge \mu_0 > 0$ and $\sigma_{SD}>0$.

\item\label{a:variance} \textbf{Noise variance control across the filtration.}
$\sigma_{\mathrm{ROSA}} \;\le\; \beta\,\sqrt{m+1}\,\sigma_{SD}.$

If the $m+1$ step distances were independent and had common variance $\sigma_{SD}^2$, then $\sigma_{\mathrm{ROSA}}^2 = (m+1)\sigma_{SD}^2$, giving $\beta=1$.
The nested filtration structure induces positive correlation across steps, which increases variance: $\operatorname{Var}(\sum_k d_k) = \sum_k \operatorname{Var}(d_k) + 2\sum_{i<j}\operatorname{Cov}(d_i,d_j)$.
The constant $\beta$ absorbs this correlation; empirically $\beta \approx 4$ (Section \ref{sec-results-sensitivity}), reflecting moderate positive dependence from shared edge structure across consecutive filtration steps.

\item\label{a:filtration-amplifies} \textbf{Filtration amplifies expected cross-distance.}
\[
\sum_{k=1}^{m} \mathbb{E}_\eta\bigl[d((G^{\eta_G})_k,(H^{\eta_H})_k) - d(G^{\eta_G},H^{\eta_H})\bigr]
\;\ge\; (m+1)\,\gamma\,\mu_{SD}
\quad\text{for some }\gamma>0.
\]

(\ref{a:filtration-amplifies}) states that removing edges increases the expected distance between the noisy graphs on average.
This is a signed mean-shift condition: it concerns the cumulative expected distance trajectory directly rather than absolute ordering-displacement bounds.
It is empirically testable: in Section \ref{sec-results-sensitivity}, we test when $\mu_{\mathrm{ROSA}} > (m+1)\mu_{SD}$ holds and explicitly report the regimes where this sufficient condition is not necessary for $R>1$.
\end{enumerate}

The proposition is intentionally stated at the level of the operative mean and variance conditions.
In concrete instantiations, one may seek stronger structural conditions on the heuristic and noise process that imply (A3), but such conditions are not required by the proof itself.

Then the amplification ratio satisfies
\[
R=\frac{\mathrm{IS2}_{\mathrm{ROSA}}}{\mathrm{IS2}_{SD}}
\;\ge\;
\frac{\sqrt{m+1}(1+\gamma)}{\beta}.
\]
In particular, whenever $\sqrt{m+1}(1+\gamma) > \beta$,
we have $R>1$.
\end{proposition}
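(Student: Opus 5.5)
The plan is to bound the mean and the standard deviation of $d^{\mathrm{ROSA}}_f$ separately and then combine the two bounds in the ratio. Use linearity of expectation on the ROSA sum: since $d^{\mathrm{ROSA}}_f(G^{\eta_G},H^{\eta_H})=\sum_{k=0}^m d((G^{\eta_G})_k,(H^{\eta_H})_k)$ and each summand has finite mean, we get
\[
\mu_{\mathrm{ROSA}} = \sum_{k=0}^{m}\mathbb{E}_\eta\bigl[d((G^{\eta_G})_k,(H^{\eta_H})_k)\bigr].
\]
Note that the $k=0$ term equals $\mu_{SD}$, because $(X)_0=X$. We then write each remaining term as $\mu_{SD}$ plus the increment $\mathbb{E}_\eta[d((G^{\eta_G})_k,(H^{\eta_H})_k)-d(G^{\eta_G},H^{\eta_H})]$. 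This gives
\[
\mu_{\mathrm{ROSA}} = (m+1)\mu_{SD} + \sum_{k=1}^m \mathbb{E}_\eta\bigl[d((G^{\eta_G})_k,(H^{\eta_H})_k)-d(G^{\eta_G},H^{\eta_H})\bigr].
\]
Applying (\ref{a:filtration-amplifies}) to the last sum yields $\mu_{\mathrm{ROSA}}\ge (m+1)(1+\gamma)\mu_{SD}$.

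Next we divide. By (\ref{a:signal}) and the standing positivity assumption, $\sigma_{SD}>0$, $\sigma_{\mathrm{ROSA}}>0$ and $\mu_{SD}\ge\mu_0>0$, so both IS2 scores are well defined and positive. Then
\[
R=\frac{\mu_{\mathrm{ROSA}}/\sigma_{\mathrm{ROSA}}}{\mu_{SD}/\sigma_{SD}} = \frac{\mu_{\mathrm{ROSA}}}{\mu_{SD}}\cdot\frac{\sigma_{SD}}{\sigma_{\mathrm{ROSA}}}.
\]
The first factor is at least $(m+1)(1+\gamma)$ by the mean bound. The second factor is at least $1/(\beta\sqrt{m+1})$ by (\ref{a:variance}). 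Both factors are positive, so multiplying the two lower bounds is legitimate, and $R\ge \sqrt{m+1}(1+\gamma)/\beta$. The condition $\sqrt{m+1}(1+\gamma)>\beta$ then gives $R>1$ immediately.

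The main obstacle is bookkeeping rather than depth: making sure every expectation in the decomposition is finite, so that linearity and the subtraction of increments are legitimate. The hypotheses only assert finiteness for $d$ and $d^{\mathrm{ROSA}}_f$, not for each individual step distance. I would handle this by noting that $d\ge0$, so every step term is dominated by the ROSA sum, which has finite mean; each step term is therefore integrable, and (\ref{a:filtration-amplifies}) is then a well-defined finite statement. Measurability of the step terms is part of what the proposition already assumes (a measurable $d$ and a noise-then-filter construction), so I would state it as implicit rather than prove it.
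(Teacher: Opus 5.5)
Your proposal is correct and matches the paper's proof step for step: the same linearity-of-expectation decomposition $\mu_{\mathrm{ROSA}} = (m+1)\mu_{SD} + \sum_{k=1}^{m}\mathbb{E}_\eta[\cdots]$, the same use of (A3) to obtain $\mu_{\mathrm{ROSA}} \ge (m+1)(1+\gamma)\mu_{SD}$, and the same combination with (A2) in the ratio. Your integrability remark, that each nonnegative step term is dominated by the ROSA sum and so has finite mean, supplies a small rigor point that the paper leaves implicit.
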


\begin{proof}
We relate the ROSA mean $\mu_{\mathrm{ROSA}}$ to the base mean $\mu_{SD}$, bound
$\sigma_{\mathrm{ROSA}}$ in terms of $\sigma_{SD}$, and combine these to obtain a lower
bound on the IS2 ratio.

\textbf{Step 1: Decomposition of the ROSA mean.}
By definition,
$\mu_{\mathrm{ROSA}}
= \mathbb{E}_\eta\left[\sum_{k=0}^{m} d((G^{\eta_G})_k,(H^{\eta_H})_k)\right]
= \sum_{k=0}^{m} \mathbb{E}_\eta\bigl[d((G^{\eta_G})_k,(H^{\eta_H})_k)\bigr]$.
The $k=0$ term equals $\mu_{SD}$. For each $k \ge 1$, add and subtract $d(G^{\eta_G},H^{\eta_H})$:
$\mathbb{E}_\eta\bigl[d((G^{\eta_G})_k,(H^{\eta_H})_k)\bigr]
= \mu_{SD}
+ \mathbb{E}_\eta\bigl[d((G^{\eta_G})_k,(H^{\eta_H})_k) - d(G^{\eta_G},H^{\eta_H})\bigr]$.
Summing over $k=1,\dots,m$ yields
\begin{equation}\label{eq:mu-rosa-decomp}
\mu_{\mathrm{ROSA}}
= (m+1)\,\mu_{SD}
+ \sum_{k=1}^{m} \mathbb{E}_\eta\bigl[d((G^{\eta_G})_k,(H^{\eta_H})_k) - d(G^{\eta_G},H^{\eta_H})\bigr].
\end{equation}

\textbf{Step 2: Bounding the mean shift.}
By assumption (A3), the remainder in \eqref{eq:mu-rosa-decomp} satisfies
$\sum_{k=1}^{m} \mathbb{E}_\eta\bigl[d((G^{\eta_G})_k,(H^{\eta_H})_k) - d(G^{\eta_G},H^{\eta_H})\bigr]
\;\ge\; (m+1)\,\gamma\,\mu_{SD}$.
Substituting into \eqref{eq:mu-rosa-decomp}:
$\mu_{\mathrm{ROSA}}
\;\ge\; (m+1)\,\mu_{SD} + (m+1)\,\gamma\,\mu_{SD}
= (m+1)(1+\gamma)\,\mu_{SD}$.

\textbf{Step 3: Variance bound and amplification ratio.}
By assumption (A2), $\sigma_{\mathrm{ROSA}} \le \beta\,\sqrt{m+1}\,\sigma_{SD}$.
Combining with the mean bound:
$R=\frac{\mathrm{IS2}_{\mathrm{ROSA}}}{\mathrm{IS2}_{SD}}
=
\frac{\mu_{\mathrm{ROSA}}/\sigma_{\mathrm{ROSA}}}{\mu_{SD}/\sigma_{SD}}
\;\ge\;
\frac{\sqrt{m+1}(1+\gamma)}{\beta}$.
\end{proof}

Proposition \ref{thm:snr-guarantee} isolates the two quantities that
determine amplification, namely the cumulative mean shift across the
filtration (A3) and the variance-growth factor (A2). For the
\(W_2\)+\acs{JCP}+\ac{CTPL} instantiation, these operative conditions
are verified empirically in Section \ref{sec-results-sensitivity}, where
they correctly predict the observed amplification regime. In general,
verifying these assumptions from graph structure alone is combinatorial:
it depends jointly on the specific weighted graph, the ordering induced
by the heuristic, and the noise process, so a uniform graph-family
theorem is not realistic. For that reason we give one explicit main-text
instantiation (Section Section~\ref{sec-results-sensitivity}) and treat
the remaining variance-growth term empirically. This also motivates
stochastic \ac{ROSA} (Section \ref{sec-stochastic-rosa}): by randomizing
over orderings, stochastic \ac{ROSA} hedges against ordering uncertainty
without requiring any single deterministic ordering to satisfy a strong
sufficient-condition argument.

For an explicit weighted-star instantiation, consider the five-vertex
star with one hub and four leaves at the corners of the unit square,
with spoke weights \(w_i = 1/\|v_0-v_i\|\). For this motif, the \ac{JCP}
score of spoke \(i\) is
\(\rho_i = \frac{4w_i + \sum_{j\neq i} w_j}{2\|L\|_F}\), so
\(\rho_i > \rho_j\) if and only if \(w_i > w_j\): the filtration order
is exactly the spoke-weight order, with larger scores removed earlier
under the implementation convention. This gives an explicit symbolic
check of the ordering mechanism, not a proof of the full \ac{IS2}
conditions. For a generic asymmetric hub perturbation, the deterministic
filtration trajectories diverge and the empirical hub-position scan
yields \(\gamma>0\) on sampled positive regions outside the spoke-length
equality loci; for the mirror-symmetric hub displacement, the weighted
graphs are permutation-similar and \(\gamma=0\); and for a first-removed
single-edge tree edit the post-removal trajectories coincide, giving
\(\gamma<0\). Under iid Gaussian edge noise below the deterministic
score-gap scale, the spoke ordering is stable with high probability,
which supports the local-ordering part of the construction. Stable
ordering alone does not prove nondegeneracy or the signed mean-shift
condition \textup{(A3)}; those remain operative checks, with the
positive regions evidenced numerically here. The remaining
variance-growth condition \textup{(A2)} is nontrivial even for the
weighted star because consecutive filtration steps are correlated; in
this paper it is therefore estimated empirically rather than closed
symbolically.

\begin{theorem}[Instantiation on weighted-star boundary]
For the five-vertex weighted star with \ac{JCP}-ordered spoke filtration:
\begin{enumerate}[label=\textup{(\roman*)},leftmargin=*,itemsep=4pt]
\item a generic asymmetric hub perturbation changes multiple spoke weights and yields divergent deterministic filtration trajectories;
\item a mirror-symmetric hub displacement induces permutation-similar weighted graphs and yields $\gamma=0$;
\item a first-removed single-edge tree edit confines the signal to one filtration step and yields $\gamma<0$.
\end{enumerate}
Thus the same motif exhibits observable divergence, obstruction, and dilution depending only on how the edit interacts with the graph symmetry and filtration order; the corresponding amplification region $\gamma>0$ is then mapped empirically in the appendix.
\end{theorem}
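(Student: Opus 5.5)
We work directly with the explicit star $S$: hub $v_0$, leaves $v_1,\dots,v_4$ at the corners of the unit square, spoke weights $w_i=1/\|v_0-v_i\|$. The first step is to record the combinatorial Laplacian in closed form:
\[
L = \begin{pmatrix} \sum_i w_i & -w^\top \\ -w & \mathrm{diag}(w)\end{pmatrix}.
\]
Since every spoke neighbourhood at radius one is the whole vertex set, $\rho_i=\langle B_{0i},L\rangle_F/(2\|L\|_F)$. We will use the formula $\rho_i=(4w_i+\sum_{j\ne i}w_j)/(2\|L\|_F)$ stated before the theorem, so the JCP order is exactly the decreasing spoke-weight order.

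Two facts about star Laplacians carry all three parts.
\begin{enumerate}
\item[(a)] A star Laplacian's spectrum is determined by its weight multiset, with a permutation of the leaves acting by similarity $P L P^\top$.
\item[(b)] After removing $k$ spokes, $X_k$ is the star on the remaining weights plus $k$ isolated vertices, i.e.\ zero eigenvalues.
\end{enumerate}
Hence $d_{\mathrm{SD}}(G_k,H_k)$ depends only on the sorted remaining weight vectors, and the trajectory becomes an explicit function of sorted weights. For the spectra themselves we use the secular equation $1+\sum_i w_i/(\lambda-w_i)\cdot(\ldots)$. More simply, $L$ has eigenvalue $0$, eigenvalues interlacing the $w_i$ from $\det(\lambda I-\mathrm{diag}(w))\big(\lambda-\sum w-\sum_i w_i^2/(\lambda-w_i)\big)=0$, and one top eigenvalue. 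This identifies which perturbations change the spectrum.

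\textbf{Part (i).} A generic hub displacement $v_0\mapsto v_0+\epsilon u$ changes all four distances $\|v_0-v_i\|$ to first order by $-\epsilon\langle u,(v_i-v_0)/\|v_i-v_0\|\rangle$. These four directional derivatives are distinct unless $u$ is orthogonal to a symmetry axis. Outside the finitely many spoke-length equality loci, the weight vector of $H$ is therefore not a permutation of that of $G$. Two consequences follow: the orderings $\pi^G,\pi^H$ are strict, and the remaining sorted weight vectors differ at some $k\ge 1$. Via the secular equation this gives $d(G_k,H_k)>0$ for those $k$, which is the divergent trajectory. Genericity is an open dense condition, which we verify by checking that the map from displacement to weight differences is a nonconstant real-analytic function.

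\textbf{Part (ii).} Take a displacement along a mirror axis of the square, for example $u=(1,0)$ from the centre. The reflection swaps the leaves pairwise and maps the displaced configuration to one with the same multiset of spoke lengths as its mirror image. In the intended pairing, $G$ and $H$ are the two mirror positions. There is a leaf permutation $P$ with $w^H=Pw^G$. JCP is permutation-equivariant, so $\pi^H=P\pi^G$ up to ties, where ties must be handled by choosing the implementation tie-break consistently. Hence $H_k\cong G_k$ with Laplacians similar for every $k$, every trajectory term vanishes, and the sum in (A3) is $0$. This gives $\gamma=0$. Under independent noise the same identity holds in distribution, so the expected shift is zero.

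\textbf{Part (iii).} Let $H=G$ minus the spoke $e_G^{(1)}$ that JCP ranks first. The order of the remaining spokes is unchanged, since they keep their weights and the comparison of $\rho$ values reduces to comparing weights. So for $k\ge1$ both $G_k$ and $H_k$ have removed $e_G^{(1)}$ plus the same further spokes, up to a one-step offset. Precisely, $G_k=H_{k-1}$ restricted appropriately. In the clean lockstep form, $G_1=H_0$ as graphs, so the signal appears only at $k=0$, and thereafter $d(G_k,H_k)=d(H_{k-1},H_k)$. We will argue that $d(G_k,H_k)<d(G,H)$ on average: each later difference removes a strictly smaller weight than the first, and $W_2$ between star spectra differing by one spoke is monotone in the removed weight. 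Monotonicity follows from interlacing and the explicit top eigenvalue. The sum in (A3) is then negative, giving $\gamma<0$.

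\textbf{Main obstacle.} I expect this monotonicity claim in (iii) to be the hardest step, together with the tie handling in (ii). If a clean analytic monotonicity proof fails, I would fall back on the closed-form spectra for the explicit unit-square weights, which the five-vertex size makes computable.
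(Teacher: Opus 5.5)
\textbf{Parts (i) and (ii)} are sound and follow essentially the paper's route.

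In (i), $W_2(G_k,H_k)>0$ needs the converse of your fact (a): distinct remaining weight multisets must give distinct star spectra. This holds because the coefficients of the star's characteristic polynomial are, up to fixed factors, the elementary symmetric functions of the weights (the quartic in the paper's appendix). State this explicitly.

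In (ii), ties are harmless. Removal by decreasing weight always deletes a copy of the current maximal weight, so the remaining multisets of $G_k$ and $H_k$ agree for any tie-break. Drop your closing sentence about noise, however. Independently noised mirror images are not permutation-similar realization by realization, and the pure-noise mean shift need not vanish. The claim $\gamma=0$ concerns the noiseless pair only.

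\textbf{The genuine gap is (iii).} You model the edit as deleting the first-ranked spoke, $H=G-e_G^{(1)}$. The intended edit (the paper's Theorem~\ref{thm:sharp-boundary}(iii)) keeps the edge set and changes only the \emph{weight} of one spoke, which is first in the shared strict ordering of both graphs. At step $1$ both graphs lose that bridge, its leaf becomes isolated in both, and the remaining star has identical weights. Hence $L_{G_k}=L_{H_k}$ for all $k\ge 1$, so $d^{\mathrm{ROSA}}_f(G,H)=d_{\mathrm{SD}}(G,H)$ and $\gamma=-M/(M+1)$ exactly. This follows immediately from your facts (a)--(b) and needs no spectral estimate.

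Under your deletion reading, $H_k=G_{k+1}$, so every term $d(G_k,G_{k+1})$ is strictly positive. The signal is therefore not confined to one step; your sentence saying it appears only at $k=0$ contradicts your next clause.

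The monotonicity lemma you planned to use is also false. Removing a spoke of weight $w_e$ lowers the trace by $2w_e$, with nonnegative interlacing shifts. But $W_2$ depends on how that drop is spread over the spectrum. With all weights equal to $w$:
\begin{itemize}
\item removing one spoke from $K_{1,4}$, from $K_{1,3}\cup K_1$, or from $K_{1,2}\cup 2K_1$ gives $W_2^2=2w^2/5$ each time;
\item removing the last remaining spoke gives $W_2^2=4w^2/5$.
\end{itemize}
By continuity, take a hub near the centre of the square (distinct, nearly equal weights) and full depth $m=3$. Then $\sum_{k=1}^{3}(d_k-d_0)\approx(\sqrt{4/5}-\sqrt{2/5})\,w>0$, i.e.\ $\gamma\approx(\sqrt2-1)/4>0$, and $d_3>d_2$ even though the weight removed at the last step is smaller. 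So your version of (iii) is not just unproven but false in part of the parameter range. Switch to the weight-edit reading, where the result is the two-line bridge argument above.
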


The appendix gives the full symbolic derivation and supporting evidence
for this motif: Theorem \ref{thm:sharp-boundary}, the setup and
\(\gamma\)-landscape figures (Figures \ref{fig:sharp-boundary-setup} and
\ref{fig:sharp-boundary-heatmap}), and the worked-example table
\ref{tab:jcp-worked-example}.

These operative conditions and boundary cases, combined with our
extensive results (Section \ref{sec-results}) suggest that the most
plausible regime where \ac{ROSA} works with \ac{JCP} and spectral
distance is one with meaningful local asymmetry, where the edge-ordering
heuristic induces a nontrivial filtration and localized perturbations
accumulate coherently. At the opposite extreme, highly symmetric or
near-uniform dense graphs provide little edge-order signal: for an
equal-weight complete graph, \ac{JCP} is exactly flat by symmetry, and
in dense near-uniform weighted graphs even small perturbations can leave
the filtration nearly indistinguishable from a rescaled one-shot
comparison. Our negative results on some graphs (Section
\ref{sec-results-fmri-boundary}) confirm that there exist real-world
regimes in which its operative conditions are not expected to hold. The
synthetic experiments locate the positive regime empirically, and
Appendix Table \ref{tab:jcp-worked-example} gives a small worked example
showing both symmetry-induced \ac{JCP} collapse and a sparse asymmetric
case where the ordering becomes informative.

A first failure mode occurs when \ac{JCP} ceases to be local. If an edge
\(e=(i,j)\) satisfies \(U_e = \{i,j\} \cup N(i) \cup N(j) = V\), then
the local Laplacian block in the \ac{JCP} denominator is the full graph
Laplacian, so the denominator is identical for all such edges.
Amplification may collapse on such substrates (see Figure
\ref{fig:fmri-boundary}). In that regime the \ac{JCP} score reduces to
\(\rho_e = \frac{L_{ii} + L_{jj} - 2L_{ij}}{2\|L\|_F} = \frac{d_i + d_j + 2w_{ij}}{2\|L\|_F}\),
and the heuristic no longer reflects edgewise local spectral variation.
Complete graphs satisfy this condition for every edge, and sufficiently
dense near-uniform graphs can approximate it closely, explaining why
\ac{JCP}-based amplification may collapse on such substrates.

A second limitation is that signal and noise orderings do not compose.
In that case there is no structure of the form
\(\pi_{\mathrm{noise}} \circ \pi_{\mathrm{signal}}\) for edge orderings:
the observed ordering
\(\pi^{\mathrm{obs}} = \operatorname{argsort}(x + s(x) + \eta(x))\) is a
nonlinear function of the joint signal-plus-noise process. The
sensitivity condition
\(\Delta_f^{\mathrm{signal}} > \Delta_f^{\mathrm{noise}}\) is therefore
testable empirically, by comparing the ordering displacement under
signal-only versus noise-only perturbations, but does not decompose the
observed ordering analytically.

\subsubsection{Non-Contraction and Conditional
Anti-Dilution}\label{sec-anti-dilution}

As graph size increases, the direct spectral effect of a bounded
localized edit may dilute, causing the one-shot base distance to
approach zero. No graph-independent claim can ensure that a particular
filtration avoids this behavior: the outcome depends on the graph, edit,
ordering, and, when present, noise realization. \ac{ROSA}-C nevertheless
has the following pointwise safeguard because its accumulated trajectory
includes the unfiltered comparison at \(k=0\).

\begin{proposition}[ROSA-C non-contraction and conditional anti-dilution]\label{thm:anti-dilution}
Let $G$ and $H$ be graphs on a common vertex set, let $d$ be any nonnegative graph dissimilarity, and let $f$ induce graph-wise filtrations $(G_k)_{k=0}^{m}$ and $(H_k)_{k=0}^{m}$ through a common depth $m$.
Then
\[
d_f^{\mathrm{ROSA}}(G,H)
= \sum_{k=0}^{m} d(G_k,H_k)
\ge d(G,H).
\]
Equality holds if and only if $d(G_k,H_k)=0$ for every $k=1,\dots,m$.
If $d(G,H)=0$ but $d(G_k,H_k)>0$ for at least one filtered step, then $d_f^{\mathrm{ROSA}}(G,H)>0$.
If $d(G,H)>0$ and some filtered step satisfies $d(G_k,H_k)\ge c\,d(G,H)$ for $c>0$, then
\[
\frac{d_f^{\mathrm{ROSA}}(G,H)}{d(G,H)} \ge 1+c.
\]
\end{proposition}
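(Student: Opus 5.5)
The proof works directly from the definition of $d_f^{\mathrm{ROSA}}$ and the nonnegativity of $d$; no spectral structure, ordering property, or noise assumption is needed. The key observation is that $G_0=G$ and $H_0=H$ by construction of the filtration. This lets me split the sum as
\[
d_f^{\mathrm{ROSA}}(G,H) = d(G,H) + S, \qquad S := \sum_{k=1}^{m} d(G_k,H_k),
\]
and every claim then reduces to a statement about the tail $S$. Since each summand satisfies $d(G_k,H_k)\ge 0$, we get $S\ge 0$, which gives the inequality $d_f^{\mathrm{ROSA}}(G,H)\ge d(G,H)$. When $m=0$ the tail is empty, so $S=0$ and equality holds trivially, consistent with the statement.

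For the equality characterization, I will use the elementary fact that a finite sum of nonnegative reals vanishes if and only if every term vanishes. Equality $d_f^{\mathrm{ROSA}}(G,H)=d(G,H)$ holds exactly when $S=0$, hence exactly when $d(G_k,H_k)=0$ for all $k=1,\dots,m$. The third claim follows from the same decomposition. If $d(G,H)=0$ and some $d(G_{k^*},H_{k^*})>0$ with $k^*\ge 1$, then $S\ge d(G_{k^*},H_{k^*})>0$, because the remaining terms are nonnegative, so $d_f^{\mathrm{ROSA}}(G,H)=S>0$.

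For the ratio bound, suppose $d(G,H)>0$ and $d(G_{k^*},H_{k^*})\ge c\,d(G,H)$ for some filtered step $k^*\in\{1,\dots,m\}$. Dropping all other nonnegative tail terms gives $S\ge c\,d(G,H)$. Hence $d_f^{\mathrm{ROSA}}(G,H)\ge (1+c)\,d(G,H)$, and dividing by $d(G,H)>0$ yields the stated ratio.

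There is no real obstacle here. The one point that needs care is that "filtered step" must mean $k\ge 1$. If $k=0$ were allowed in the last claim, then with $c\le 1$ the hypothesis would hold trivially at $k=0$ and the bound $1+c$ would fail. I will therefore state explicitly that $k^*$ ranges over $1,\dots,m$, which requires $m\ge 1$, matching the wording of the proposition. Finally, I will remark that no metric axioms are used beyond nonnegativity, which is why the proposition applies to the pseudometric spectral instantiation $d_{\mathrm{SD}}$ as well.
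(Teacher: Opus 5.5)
Your proposal is correct and matches the paper's argument: isolate the $k=0$ term $d(G_0,H_0)=d(G,H)$, use nonnegativity of the remaining terms for both the inequality and the equality characterization, and keep the witness term for the two conditional claims. Your remark that the witness in the ratio bound must satisfy $k^*\ge 1$ is a worthwhile precision, since at $k=0$ the hypothesis holds trivially for $c\le 1$ and the bound could fail; the paper leaves this implicit in the phrase ``filtered step''.
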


\begin{proof}
The $k=0$ term is $d(G_0,H_0)=d(G,H)$, and every remaining term is nonnegative.
Equality therefore holds exactly when every filtered-step term is zero.
The two conditional conclusions follow by retaining the stated witness term in the sum.
\end{proof}

The result is pointwise and therefore holds for every realization under
any noise process for which the filtration is defined; taking
expectations preserves the weak inequality whenever the expectations are
finite. It does not claim that every ordering breaks cospectrality, that
strict amplification occurs for every graph or edit, or that
\(d_f^{\mathrm{ROSA}}(G_n,H_n)\) remains bounded away from zero as graph
size grows. Both the base and accumulated distances may tend to zero;
the proposition states only that accumulation cannot reduce the included
base distance and gives exact conditions for strict or relative gain.
For the empirical \(W_2\) dissimilarity on combinatorial-Laplacian
spectra used here, non-identical cospectral graphs can have zero base
distance, and an adversarial or uninformative ordering may preserve zero
distance at every filtered step. Whether useful witness steps occur, and
whether their gain remains stable under noise, are graph- and
problem-dependent questions addressed empirically and through
Proposition \ref{thm:snr-guarantee}.

\subsubsection{Stability Under Perturbation}\label{sec-lipschitz}

The following proposition establishes a local Lipschitz bound for
\ac{ROSA}-C when each graph's filtration ordering is stable under its
perturbation.

\begin{proposition}[Local stability of ROSA-C under fixed filtration ordering]\label{thm:lipschitz}
Let $G,H,\tilde{G},\tilde{H}$ be weighted graphs on a common vertex set, where
$\tilde{G}$ and $\tilde{H}$ are perturbed versions of $G$ and $H$, respectively.
Fix a filtration depth $m$ such that the first $m$ edge removals are defined for all four graphs.
Assume edge identities are comparable by their vertex endpoints.
For each pair $(G,\tilde{G})$ and $(H,\tilde{H})$, assume that the first $m$ endpoint-labelled removals lie in that pair's common removable edge set.

Assume:
\begin{enumerate}
\item The base dissimilarity $d$ is jointly Lipschitz as a function of Laplacians:
there exists $C>0$ such that, for the chosen matrix norm,
\[
| d(L_1,L_2) - d(L_1',L_2') |
\le C( \|L_1 - L_1'\| + \|L_2 - L_2'\| ).
\]
\item The heuristic $f$ induces the same first $m$ endpoint-labelled edge-removal sequence on
$G$ and $\tilde{G}$, and the same first $m$ endpoint-labelled edge-removal sequence on
$H$ and $\tilde{H}$.
The $G$-side and $H$-side sequences need not be identical to each other.
The fixed-order assumption is only between each graph and its perturbation.
\item Along these matched filtrations, the stepwise perturbations remain
uniformly controlled: there exist bounds $\bar\varepsilon_G,\bar\varepsilon_H$
such that
\[
\|L_{G_k} - L_{\tilde{G}_k}\| \le \bar\varepsilon_G,
\qquad
\|L_{H_k} - L_{\tilde{H}_k}\| \le \bar\varepsilon_H
\]
for all $k=0,\dots,m$.
\end{enumerate}

Then $\bigl| d_f^{\mathrm{ROSA}}(G,H) - d_f^{\mathrm{ROSA}}(\tilde{G},\tilde{H}) \bigr|
\;\le\;
C\,(m+1)\,(\bar\varepsilon_G + \bar\varepsilon_H)$.
\end{proposition}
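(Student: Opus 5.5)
The argument is a telescoping-free, term-by-term comparison. By definition of ROSA-C we have $d_f^{\mathrm{ROSA}}(G,H)=\sum_{k=0}^{m} d(G_k,H_k)$ and $d_f^{\mathrm{ROSA}}(\tilde G,\tilde H)=\sum_{k=0}^{m} d(\tilde G_k,\tilde H_k)$, where each filtration is induced by $f$ applied to its own graph. The triangle inequality for absolute values then gives
\[
\bigl|d_f^{\mathrm{ROSA}}(G,H)-d_f^{\mathrm{ROSA}}(\tilde G,\tilde H)\bigr|
\le \sum_{k=0}^{m}\bigl|d(G_k,H_k)-d(\tilde G_k,\tilde H_k)\bigr|.
\]
It therefore suffices to show that each summand is at most $C(\bar\varepsilon_G+\bar\varepsilon_H)$. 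Summing $m+1$ such terms yields the stated bound.

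The key step is to make sure the $k$-th summand compares the right pairs of Laplacians. This is where assumption 2 and the common-removable-edge hypothesis enter. Because $f$ induces the same first $m$ endpoint-labelled removals on $G$ and $\tilde G$, and these edges lie in the common removable set, $G_k$ and $\tilde G_k$ are obtained by deleting the same endpoint-labelled edge set $\{e_G^{(1)},\dots,e_G^{(k)}\}$ from $G$ and $\tilde G$ respectively. The analogous statement holds for $H_k$ and $\tilde H_k$. Consequently $L_{G_k}$ and $L_{\tilde G_k}$ are well-defined Laplacians on the common vertex set, and they are exactly the matched filtration states referred to in assumption 3. No compatibility between the $G$-side and $H$-side sequences is required, because the Lipschitz bound in assumption 1 treats its two arguments separately.

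With the matching established, I apply assumption 1 with $L_1=L_{G_k}$, $L_2=L_{H_k}$, $L_1'=L_{\tilde G_k}$ and $L_2'=L_{\tilde H_k}$. Here $d(G_k,H_k)$ is read as $d(L_{G_k},L_{H_k})$, which is the sense in which $d$ is a function of Laplacians. This gives
\[
\bigl|d(G_k,H_k)-d(\tilde G_k,\tilde H_k)\bigr|\le C\bigl(\|L_{G_k}-L_{\tilde G_k}\|+\|L_{H_k}-L_{\tilde H_k}\|\bigr).
\]
Assumption 3 bounds the right-hand side by $C(\bar\varepsilon_G+\bar\varepsilon_H)$ uniformly in $k$, and summing over $k=0,\dots,m$ completes the argument.

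The main obstacle is really bookkeeping rather than analysis: the ordering-stability hypothesis has to be what licenses the term-by-term comparison. Without it, $\tilde G_k$ could delete different edges than $G_k$, and no uniform bound would follow. I will state explicitly that the endpoint identification makes "the same removal sequence" meaningful across $G$ and $\tilde G$, even though their weights differ. A brief remark can also note that the bound is simply $m+1$ times the one-step Lipschitz estimate, with no attempt to exploit cancellation.
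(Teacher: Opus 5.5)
Your proposal is correct and follows the paper's proof: at each step $k$ you apply the joint Lipschitz assumption to the matched Laplacians, bound the result uniformly by assumption 3, and sum the $m+1$ terms using $\bigl|\sum_k a_k\bigr|\le\sum_k|a_k|$. Your explicit remark on how assumption 2 matches $G_k$ with $\tilde G_k$ (and $H_k$ with $\tilde H_k$) is bookkeeping the paper leaves implicit; it does not change the argument.
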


\begin{proof}
At each filtration step $k$, writing $d(G_k,H_k)$ for $d(L_{G_k},L_{H_k})$,
the joint Lipschitz property gives
$|d(G_k,H_k) - d(\tilde{G}_k,\tilde{H}_k)|
\le
C\bigl(\|L_{G_k}-L_{\tilde{G}_k}\| + \|L_{H_k}-L_{\tilde{H}_k}\|\bigr)$.
By assumption (3),
$|d(G_k,H_k) - d(\tilde{G}_k,\tilde{H}_k)|
\le
C(\bar\varepsilon_G + \bar\varepsilon_H)
\qquad\text{for all } k=0,\dots,m$.
Applying the scalar inequality $|\sum_{k=0}^{m} a_k| \le \sum_{k=0}^{m} |a_k|$ and summing over the filtration steps yields
$\bigl| d_f^{\mathrm{ROSA}}(G,H) - d_f^{\mathrm{ROSA}}(\tilde{G},\tilde{H}) \bigr|
\le
C\,(m+1)\,(\bar\varepsilon_G + \bar\varepsilon_H)$.
\end{proof}

The proposition is a local fixed-order stability statement rather than a
global robustness guarantee. Its force depends on maintaining both
ordering stability and bounded stepwise Laplacian perturbation along the
matched filtration. For weighted graphs, removing the same edge from two
nearby filtrations need not preserve the initial Laplacian difference
exactly, so the relevant quantity is the stepwise bound on
\(\|L_{G_k}-L_{\tilde{G}_k}\|\), not invariance of that difference. This
again connects to the structured regime picture: open local score
margins can help preserve the fixed-order hypothesis, and when such
margins collapse, stochastic \ac{ROSA} provides an empirical
ordering-sensitivity mechanism rather than a deterministic Lipschitz
guarantee.

\subsubsection{Metric Property of ROSA}\label{metric-property-of-rosa}

\begin{theorem}
Let $\mathcal{G}$ be a class of weighted undirected graphs, and assume that for every $X\in\mathcal{G}$ the first $m$ graph-wise filtered states $X_0,\dots,X_m$ lie in a common ambient class $\mathcal{H}$ on which $d$ is a metric.
Let $f$ assign to each graph $X$ a deterministic edge ordering $\pi^X$, with deterministic graph-wise tie-breaking independent of the comparison partner, and fix the same depth $m$ for all pairs.
Then, for such metric $d$, the corresponding \ac{ROSA}-C distance $d_f^{\mathrm{ROSA}}$ is a metric on $\mathcal{G}$.
Equivalently, a fixed retained-edge delimiter $q$ is covered only on graph classes with constant edge count.
\end{theorem}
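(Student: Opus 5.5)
The plan is to verify the four metric axioms for $d_f^{\mathrm{ROSA}}(X,Y)=\sum_{k=0}^{m} d(X_k,Y_k)$ directly. The key structural observation is that, because $f$ assigns each graph a deterministic ordering $\pi^X$ with graph-wise tie-breaking that does not depend on the comparison partner, the filtration states $X_0,\dots,X_m$ are functions of $X$ alone. Write $\Phi_k(X):=X_k\in\mathcal{H}$. Then
\[
d_f^{\mathrm{ROSA}}(X,Y)=\sum_{k=0}^{m} d(\Phi_k(X),\Phi_k(Y)).
\]
This is a finite sum of pullbacks of the metric $d$ along the maps $\Phi_k:\mathcal{G}\to\mathcal{H}$. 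The common depth $m$ is needed so that the index set $\{0,\dots,m\}$ is the same for every pair, and hence for every triple in the triangle inequality.

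\textbf{Step 1: nonnegativity, finiteness, and symmetry.} Each term is a nonnegative real number because $d$ is a metric on $\mathcal{H}$. Each term is symmetric because $d(\Phi_k(X),\Phi_k(Y))=d(\Phi_k(Y),\Phi_k(X))$. Both properties pass to the finite sum.

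\textbf{Step 2: the triangle inequality.} For $X,Y,Z\in\mathcal{G}$ and each $k$, the triangle inequality for $d$ on $\mathcal{H}$ gives
\[
d(\Phi_k(X),\Phi_k(Z))\le d(\Phi_k(X),\Phi_k(Y))+d(\Phi_k(Y),\Phi_k(Z)).
\]
Summing over $k=0,\dots,m$ gives the triangle inequality for $d_f^{\mathrm{ROSA}}$. This step uses partner-independence in an essential way: the state $\Phi_k(Y)$ must be the same object whether $Y$ is compared with $X$ or with $Z$. Otherwise the intermediate terms would not match and the sum would not telescope in this way.

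\textbf{Step 3: identity of indiscernibles.} This is the step I expect to need the most care. If $X=Y$, then $\Phi_k(X)=\Phi_k(Y)$ for every $k$, since the states are determined by the graph alone, so every term vanishes. Conversely, if $d_f^{\mathrm{ROSA}}(X,Y)=0$, then every term is zero because all terms are nonnegative. In particular the $k=0$ term gives $d(X,Y)=0$, so $X=Y$ because $X_0=X$, $Y_0=Y$ and $d$ separates points on $\mathcal{H}\supseteq\mathcal{G}$. This uses the included unfiltered step, exactly as in Proposition~\ref{thm:anti-dilution}. The point to state explicitly is that graphs in $\mathcal{G}$ are themselves the $k=0$ states, so they lie in $\mathcal{H}$ and $d$ is a genuine metric on them. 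If $d$ were only a pseudometric, for example the spectral $W_2$ on cospectral graphs, the result would only give a pseudometric.

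\textbf{Step 4: the closing remark on $q$.} Since $m=\min(|E_X|,|E_Y|)-q$, a fixed $q$ yields a pair-independent $m$ exactly when $|E_X|$ is constant on $\mathcal{G}$. In that case Steps 1--3 apply verbatim. Otherwise the depth varies with the pair, and the termwise summation argument in Step 2 breaks down because the three pairs in a triple may use different depths.
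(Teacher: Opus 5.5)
Your proof is correct and follows essentially the same route as the paper: you verify the metric axioms termwise, use partner-independence of the filtered states $X_k=\Phi_k(X)$ so that the triangle inequality sums over matching terms, and use the included $k=0$ term for identity of indiscernibles. You are somewhat more explicit than the paper, for instance in stating the direction $X=Y\Rightarrow d_f^{\mathrm{ROSA}}(X,Y)=0$ and the inclusion $\mathcal{G}\subseteq\mathcal{H}$, but the arguments do not differ in substance.
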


\begin{proof}
Non-negativity follows directly from $d$.
Symmetry follows because each filtered state is graph-wise deterministic and independent of the comparison partner, while $d$ is symmetric.
If $d_f^{\mathrm{ROSA}}(G,H)=0$, then in particular $d(G_0,H_0)=0$, so $G=H$.
For any third graph $K$, the three states $G_k,K_k,H_k$ are therefore the same in the three pairwise \ac{ROSA} sums, and
$d(G_k,H_k)\le d(G_k,K_k)+d(K_k,H_k)$
by the triangle inequality for $d$.
Summing over $k=0,\dots,m$ gives
$d_f^{\mathrm{ROSA}}(G,H)\le d_f^{\mathrm{ROSA}}(G,K)+d_f^{\mathrm{ROSA}}(K,H)$.
\end{proof}

When \(d\) is the spectral \(W_2\) dissimilarity used in the
experiments, the same argument gives only a pseudometric on graphs,
because Laplacian-cospectral non-identical graphs can have zero base
distance.

\subsection{Experimental Design for Synthetic Graphs}\label{sec-setup}

\subsubsection{Synthetic Graph and Edit-Site
Selection}\label{synthetic-graph-and-edit-site-selection}

Synthetic experiments use an augmented minimum spanning tree (\ac{MSTA})
baseline, obtained by adding the five shortest non-tree edges to an
\ac{MST} in order to retain a sparse backbone while introducing local
cycles. This sparse-plus-cycles topology is inspired by biophysical
networks such as mitochondrial skeletons and tissue contact graphs,
which combine tree-like backbone connectivity with local clusters of
short-range edges \cite{wang2023mitotnt,etournay2016tissueminer}. For
each synthetic edit site, structural edits are applied by displacing one
selected vertex by \(\Delta x\) coordinate units along the \(x\)-axis,
leaving all other vertices and the edge set fixed and producing an
edited graph \(G'\) on the same vertex set. For synthetic experiments,
vertices are selected algorithmically at three edit sites (Figure
\ref{fig:mst-sites}): bridge-adjacent, selected by the largest
Fiedler-coordinate jump across incident edges and used as a proxy for
bridge or cut-edge proximity; leaf/endpoint, a degree-1 vertex; and
small-Fiedler-magnitude hub, selected among degree-\(\ge 3\) vertices by
the smallest absolute Fiedler coordinate. Within each synthetic block,
one representative vertex per selection rule is chosen once on that
block's reference graph and then held fixed over the corresponding
parameter sweep. Across the synthetic study, we vary one main
experimental axis at a time, such as edit site, noise model, density, or
operator, while keeping the remaining design choices fixed; exact
parameter settings are listed in Table \ref{tbl:experiment-config}. For
real biological graphs (Section \ref{sec-results-real-geometry}), edit
vertices are chosen manually to reflect biologically plausible local
changes on each substrate.

\subsubsection{Noise Models}\label{noise-models}

To probe sensitivity to the noise process, we test three qualitatively
different noise models, covering i.i.d.~edge noise, i.i.d.~vertex noise,
and heavy-tailed vertex noise. For the Gaussian models, the baseline
noise scale is set from the shortest edge in \(G\): if \(w_{\min}\) is
the minimum edge weight, we take
\(\sigma_{\mathrm{gauss}} = 0.45\,w_{\min}/1.96\). For scalar Gaussian
edge noise, and for each coordinate of Gaussian vertex noise, this
matches a scalar 95\% reference scale; it is not a radial 2D
displacement bound. A multiplier \(m\) then scales this baseline up or
down. Gaussian noise (N1, N2) is standard in graph perturbation studies;
the tempered power-law model (N3) follows the well-established tempered
stable framework \cite{Rosinski2007} and provides a heavier-tailed
stress test.

\paragraph[N1: i.i.d. rectified Gaussian edge
noise.]{\texorpdfstring{N1: i.i.d. rectified\footnote{we consider
  symmetric positive weighted graphs, so the noise is positive additive
  to avoid negative distances} Gaussian edge
noise.}{N1: i.i.d. rectified Gaussian edge noise.}}\label{n1-i.i.d.-rectified-gaussian-edge-noise.}

Each edge weight is perturbed independently
\(w_{ij} \;\mapsto\; w_{ij} + |\xi_{ij}|, \qquad \xi_{ij} \sim \mathcal{N}(0,\, m^2 \sigma_{\mathrm{gauss}}^2)\).
The absolute value ensures positivity, so this model introduces a
positive bias rather than a zero-mean additive perturbation. It is the
simplest edge-local stress test: i.i.d., non-geometric, and uncorrelated
across vertices.

\paragraph{N2: Gaussian vertex noise.}\label{n2-gaussian-vertex-noise.}

Each vertex coordinate is perturbed independently:
\(v_i \;\mapsto\; v_i + \boldsymbol{\xi}_i, \qquad \boldsymbol{\xi}_i \sim \mathcal{N}(\mathbf{0},\, m^2 \sigma_{\mathrm{gauss}}^2 I_2)\).
Edge weights \(w_{ij} = \|v_i - v_j\|\) inherit correlated perturbations
from independent vertex displacements: two edges sharing a vertex
receive correlated noise.

\paragraph{\texorpdfstring{N3: \ac{CTPL} vertex
noise.}{N3:  vertex noise.}}\label{n3-vertex-noise.}

Each vertex is perturbed as \(v_i \mapsto v_i + \boldsymbol{\xi}_i\),
where \(\boldsymbol{\xi}_i = R_i(\cos\Theta_i, \sin\Theta_i)\) with
\(\Theta_i \sim \mathrm{Uniform}(0,2\pi)\) and the radial displacement
\(R_i\) follows a tempered Lévy-stable density \cite{Rosinski2007}:
\(f_{\mathrm{CTPL}}(r; c, \lambda) = Z(c,\lambda)^{-1}\, f_{\mathrm{Levy}}(r; c)\, e^{-\lambda r}, \qquad r > 0,\quad Z(c,\lambda) = \int_0^\infty f_{\mathrm{Levy}}(r;c)\,e^{-\lambda r}\,dr\),
where \(c > 0\) controls the power-law regime and \(\lambda \ge 0\) is
the exponential tempering parameter. The Lévy kernel retains a
heavy-tail core (\(\sim r^{-3/2}\)) while tempering ensures finite
moments \cite{Rosinski2007}. Given \(r_{\max}\) and tail tolerance
\(\delta\), \(\lambda\) is calibrated on the uncapped law so that
\(\Pr[R > r_{\max}] \le \delta\) before realized radii are capped at
\(r_{\max}\); the calibration procedure and implementation are described
in \cite{cardoen2026spectral}. This is the most challenging model:
vertex displacements are independent and heavy-tailed, while edge-weight
perturbations become dependent for incident edges and remain calibrated
to stay within graph constraints. In the main synthetic sweeps, the
maximum vertex displacement is set to \(r_{\max}=0.45\,w_{\min}\) with
\(\delta=0.05\); the real-geometry trajectory experiments below use the
stricter \(r_{\max}=w_{\min}/3\) convention. For each \ac{CTPL} scale
parameter \(c\), the tempering parameter \(\lambda\) is auto-calibrated
to enforce the stated tail bound. Sections
\ref{sec-results-heatmaps}--\ref{sec-results-operators} use N3 as the
primary noise model; the second row of Section
\ref{sec-results-heatmaps} compares all three.

\subsubsection{Metrics, Heuristics, and
Inference}\label{metrics-heuristics-and-inference}

The base metric is the spectral distance \(d_{\mathrm{SD}}\) with the
\ac{JCP} heuristic (Section \ref{sec-jacobian}). For each condition,
\(K = 100\) independent noise realizations estimate \(\mathrm{IS2}_d\)
and \(\mathrm{IS2}_{\mathcal{A}_f(d)}\). Within each realization, the
base and edited graphs receive independently seeded noise draws, and the
resulting direct and \ac{ROSA}-amplified distances are paired by
realization for inference. The amplification ratio
\(R = \mathrm{IS2}_{\mathrm{ROSA}} / \mathrm{IS2}_{\mathrm{SD}}\) is
reported with paired log-scale 95\% bootstrap CIs
(\(n_{\mathrm{boot}} = 10000\) by default) and \(p_{\mathrm{boot}}\),
the fraction of paired bootstrap replicates with \(R_b^* \le 1\). We
call conditions with \(p_{\mathrm{boot}}<0.05\) bootstrap-supported;
this is an empirical percentile-bootstrap support diagnostic, not an
exact null-calibrated parametric \(p\)-value. Notation used throughout
this paper is indexed in Supplementary Table \ref{tbl:notation-summary}.

\section{Results}\label{sec-results}

Because \ac{ROSA} is an operator on distances, all experiments compare a
base metric \(d\) against its amplified variant \(\mathcal{A}_f(d)\) on
the same graph pair and noise-realization index. Comparing \ac{ROSA}
against other distances directly (e.g., a graph kernel) would confound
distance sensitivity with the amplification mechanism; the relevant
question is always whether the amplified metric improves over its own
unamplified baseline. The guiding experimental question is therefore not
whether \ac{ROSA} works universally, but where it works, where it fails,
and which graph, noise, and filtration features determine that boundary.
We therefore separate synthetic from empirical substrates, and within
each block examine the main experimental axes rather than sweeping their
full Cartesian product at once. Graph size, filtration depth, edit
magnitude, noise model, and heuristic choice are treated in dedicated
blocks; the complete configuration summary is provided in Table
\ref{tbl:experiment-config}.

\subsection{Synthetic Graphs}\label{synthetic-graphs}

\subsubsection{Amplification Across Edit Sites and Noise
Models}\label{sec-results-heatmaps}

This first synthetic experiment tests the core claim of the paper under
controlled localized edits: when edit magnitude and noise level are
varied systematically, does \ac{ROSA} improve the stability of the base
spectral distance, and does that gain depend on the edit-site selection
rule? We mimic sparse biophysical graphs such as organelle networks by
adding the five shortest non-tree edges to a 2D minimum spanning tree,
creating an augmented \ac{MST} (\ac{MSTA}). Using this baseline graph
generator and the primary \ac{CTPL} noise model, we sweep edit magnitude
\(\Delta x \in \{0.5, 1, 2, 5, 10, 25\}\) and noise scale
\(c \in \{0.5, 1, 1.5, 2, 3\}\) across three edit sites
(bridge-adjacent, leaf/endpoint, and small-Fiedler-magnitude hub); full
configuration details are given in Table \ref{tbl:experiment-config}.
Figure \ref{fig:heatmaps} (Row 1) shows that amplification is broad but
not uniform across edit sites. Leaf/endpoint edits yield the strongest
behavior: all \(30/30\) tested \((\Delta x,c)\) conditions give
bootstrap-supported \(R>1\) (\(p_{\mathrm{boot}}<0.05\)), with
\(R \in [1.92,2.39]\). Edits at the bridge-adjacent and
small-Fiedler-magnitude hub sites each achieve \(28/30\)
bootstrap-supported conditions, with peak amplification \(R=2.31\) and
\(R=2.78\), respectively. The unsupported Row 1 cells occur only at the
largest edit magnitude \(\Delta x=25\); their point estimates remain
above one, but the raw spectral distance is already comparatively
strong. Across the moderate-edit regime \(\Delta x \le 10\), \(R\)
remains at least \(1.64\) at all three sites, giving the main synthetic
evidence that the conditional \ac{IS2} gain predicted by Proposition
\ref{thm:snr-guarantee} is realized in practice for this type of graphs
and noise. Because vertex-displacement noise induces dependencies among
incident edge-weight perturbations, we next test if these results depend
on the noise model by repeating the bridge-adjacent sweep from
Section~\ref{sec-results-heatmaps} under three noise types: \ac{CTPL}
vertex displacement (the primary model), Gaussian i.i.d.~vertex
displacement, and Gaussian i.i.d.~edge-weight perturbation. For Gaussian
models, the noise multiplier \(m_{\mathrm{noise}}\) scales
\(\sigma_{\text{gauss}}\); for \ac{CTPL}, it scales the cap
\(r_{\max}=0.45\,w_{\min}\) at fixed \(c=1\) with \(\lambda\) re-tuned
for each multiplier. Edit magnitude \(\Delta x\) is expressed in units
of \(\sigma_{\text{gauss}}\) for cross-model comparability. Row 2 of
Figure \ref{fig:heatmaps} shows that all three noise models yield a
bootstrap-supported \(R > 1\) regime: across the \(30 = 6 \times 5\)
tested \((\Delta x, m_{\mathrm{noise}})\) conditions, \(23/30\)
\ac{CTPL} cells, \(25/30\) Gaussian-vertex cells, and \(23/30\)
Gaussian-edge cells meet the \(p_{\mathrm{boot}}<0.05\) criterion. The
rectified Gaussian edge panel, where noise lacks any geometric
structure, is notable: \ac{ROSA} still amplifies, though the \(R > 1\)
region is shifted toward higher noise multipliers. This shows that the
observed advantage is not confined to a single geometric vertex-noise
construction.

\begin{figure}[htbp]
\centering
\includegraphics[width=\textwidth]{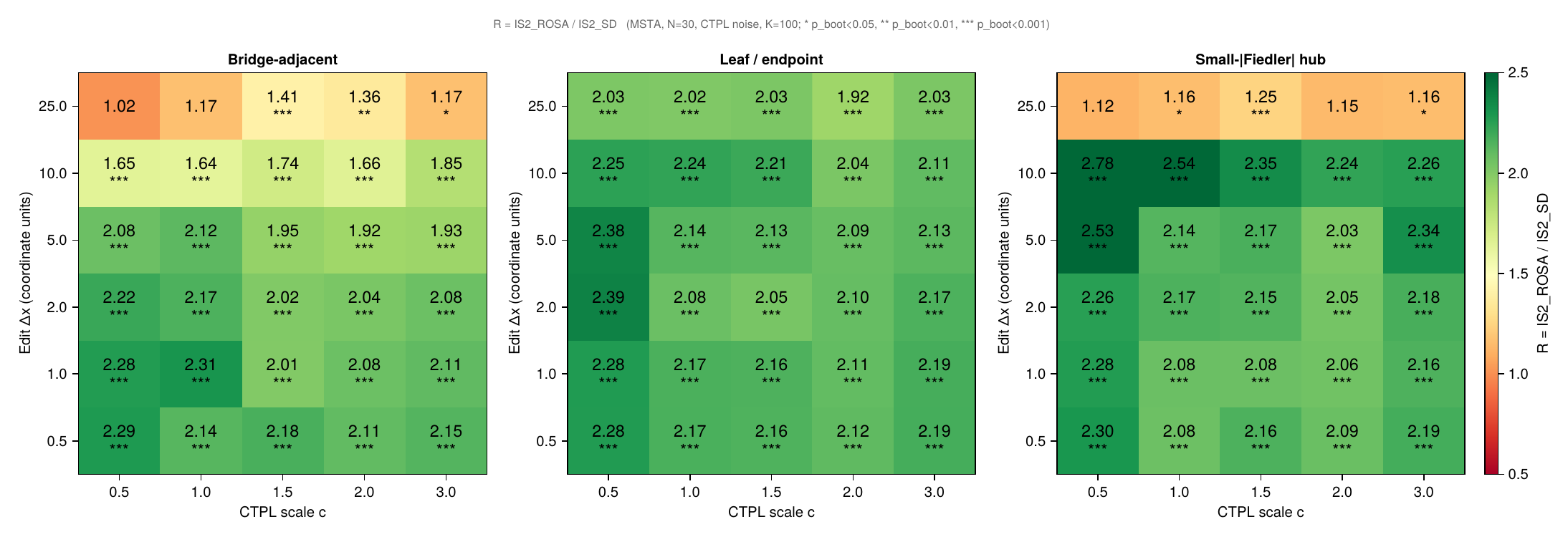}

\medskip

\includegraphics[width=\textwidth]{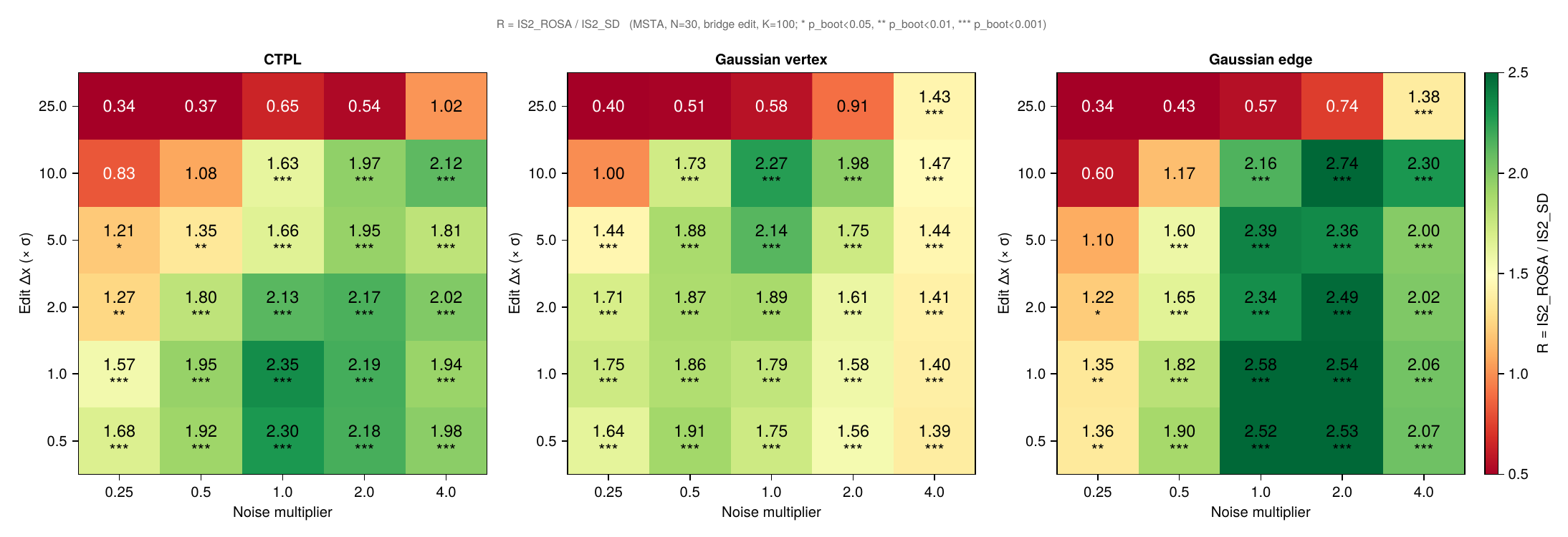}
\caption{Broad but non-universal \ac{ROSA} amplification across scenario and noise assumptions (MSTA, $N=30$, $K=100$).
\textbf{Row~1}: amplification ratio $R = \mathrm{IS2}_{\mathrm{ROSA}} / \mathrm{IS2}_{\mathrm{SD}}$ over edit magnitude $\Delta x$ and \ac{CTPL} scale $c$ for three algorithmically selected edit sites (bridge-adjacent, leaf, and small-Fiedler-magnitude hub). Bootstrap-supported $R>1$ occurs in $28/30$, $30/30$, and $28/30$ cells, respectively.
\textbf{Row~2}: amplification ratio $R$ over edit magnitude ($\times\sigma$) and noise multiplier $m_{\mathrm{noise}}$ for three noise models under the bridge-adjacent edit: \ac{CTPL} vertex displacement, Gaussian i.i.d.\ vertex displacement, and rectified Gaussian i.i.d.\ edge-weight perturbation.
Cells are annotated with $R$ and bootstrap-support stars (${}^{*}p_{\mathrm{boot}}<0.05$, ${}^{**}p_{\mathrm{boot}}<0.01$, ${}^{***}p_{\mathrm{boot}}<0.001$).
Together, the two rows show a broad \ac{IS2} advantage across both spatial edit context and noise model, with boundary cells at large edits or low noise.}
\label{fig:heatmaps}
\end{figure}

\subsubsection{Sparsity Dependence}\label{sec-results-topology}

To test how graph sparsity affects amplification, we construct a
sparsification sequence on the same 30-point set by starting from the
pure \ac{MST} (29 edges) and progressively adding
Delaunay\(\setminus\)\acs{MST} edges (shortest first) up to the full
Delaunay triangulation (78 edges); the \ac{MSTA} baseline (34 edges)
lies near the sparse end of this sweep. Edit vertices are selected once
on the pure \ac{MST} and held fixed across all densification levels;
noise is fixed at \(c=1.5\) (median of Figure \ref{fig:heatmaps}).
Figure \ref{fig:graph-diversity} shows \(R(\Delta x, \text{density})\)
for the three edit sites. Amplification is broad under full filtration:
\(29/30\) (bridge), \(29/30\) (leaf), and \(28/30\) (hub) conditions
yield bootstrap-supported \(R > 1\) (Row 1, Figure
\ref{fig:graph-diversity}). The largest full-filtration values occur at
high density, with \(R\) up to \(3.22\) (bridge), \(3.22\) (leaf), and
\(3.42\) (small-Fiedler-magnitude hub), but the trend is not strictly
monotone across all edit sites and edit magnitudes. The unsupported
full-filtration cells occur at \(\Delta x = 25\), consistent with Figure
\ref{fig:heatmaps}. Since \ac{ROSA} uses \(M = |E|-k\) filtration steps
and denser graphs have more edges, part of the \(R\) increase could
reflect the \(\sqrt{M+1}\) step-count scaling predicted by our \ac{IS2}
theorem under i.i.d.~noise. To disentangle this, Row 2 fixes the
filtration length at 27 steps (the pure-\acs{MST} value) for all
densities, using \(k = |E| - 27\). Under this control,
bootstrap-supported \(R>1\) remains in \(29/30\), \(27/30\), and
\(28/30\) cells for bridge, leaf, and hub edits, respectively, so
fixed-step evaluation reduces but does not eliminate the dense-graph
advantage. Densification can therefore benefit \ac{ROSA} in two ways:
more filtration steps (a mechanical advantage) and richer local spectra
that the \ac{JCP} heuristic can exploit (a structural advantage).
However, as shown in Section \ref{sec-results-fmri-boundary}, not all
dense graphs enable \(R>1\): counterexamples are complete graphs
(e.g.~correlation matrices) where the local versus global balance can no
longer be extracted via \ac{JCP}.

\begin{figure}[htbp]
\centering
\includegraphics[width=\textwidth]{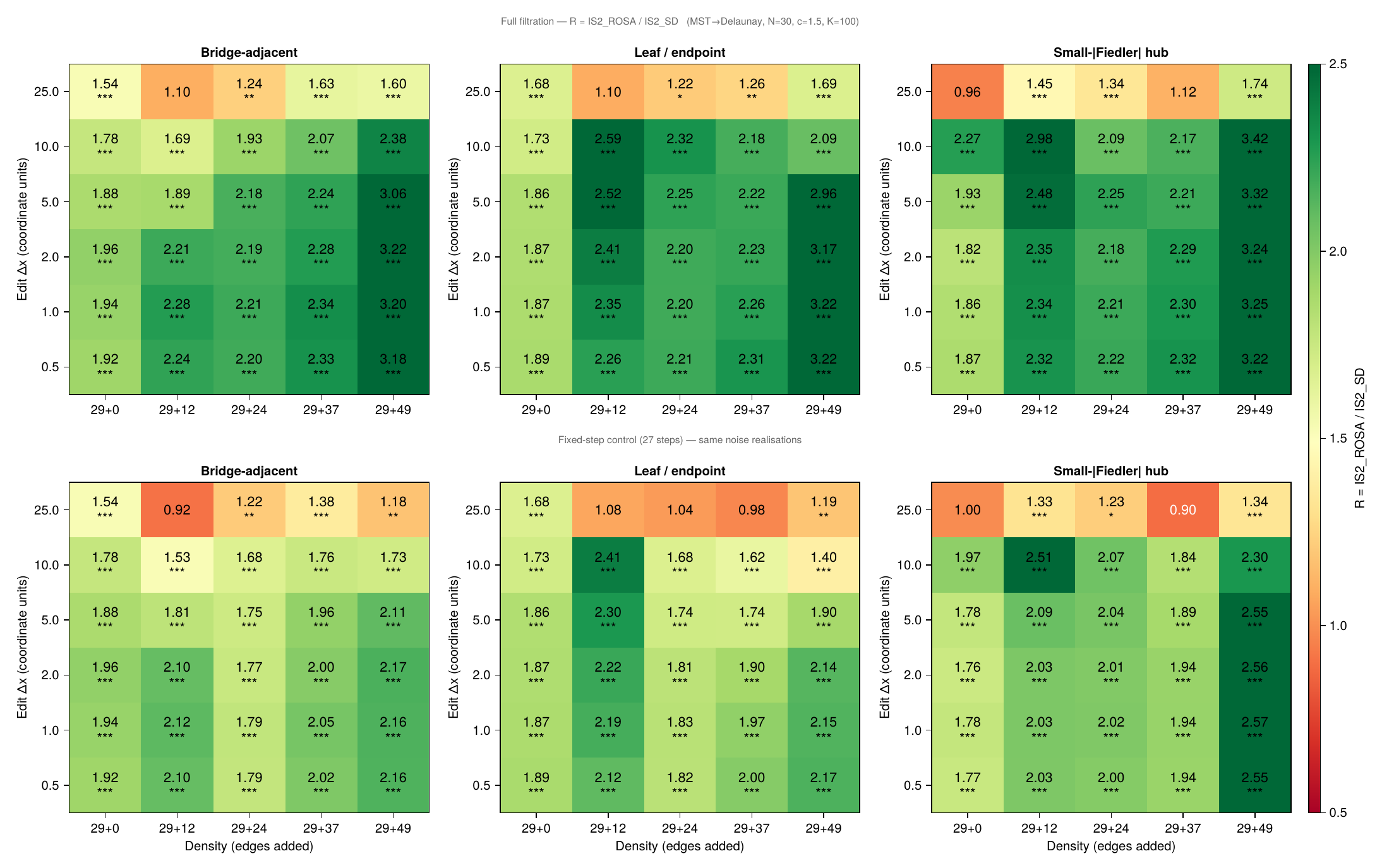}
\caption{Amplification ratio $R$ over edit magnitude $\Delta x$ and graph density (MST$\to$Delaunay) for three spatial vertex categories ($c=1.5$, $K=100$).
\textbf{Row~1}: full filtration (all $|E|-2$ steps).
\textbf{Row~2}: fixed-step control (27 steps $=$ MST filtration length), which reduces but does not remove the density-associated gain.
Density is measured by the number of Delaunay edges added to the MST (29+0 = pure MST, 29+49 = full triangulation).
Cells annotated with $R$ and bootstrap-support stars (${}^{*}p_{\mathrm{boot}}<0.05$, ${}^{**}p_{\mathrm{boot}}<0.01$, ${}^{***}p_{\mathrm{boot}}<0.001$).
The full-filtration $R$ increase from $\approx 1.9$ to $\approx 3.1$ decomposes into a step-count contribution ($\approx 0.6$) and a genuine spectral amplification ($\approx 0.6$, persistent under fixed steps).}
\label{fig:graph-diversity}
\end{figure}

\subsubsection{Operator Generality}\label{sec-results-operators}

The \ac{IS2} amplification proposition (Proposition
\ref{thm:snr-guarantee}) is operator-agnostic: it holds for any distance
function \(d\) satisfying basic regularity. In this section we test this
empirically by replacing the default Wasserstein\(_2\) spectral distance
with NetLSD (heat-trace RMSE), effective resistance distance, and a
\acf{PH} distance based on the \(H_0\) Wasserstein distance between
persistence diagrams computed from shortest-path filtrations. Figure
\ref{fig:operator-comparison} shows the results: the top row displays
amplification ratio \(R\) heatmaps, the bottom row shows absolute
\ac{IS2} values (\ac{ROSA} above, raw operator below in each cell). For
the three non-\ac{PH} operators, \ac{ROSA} consistently amplifies:
Wasserstein\(_2\) yields 25/30 bootstrap-supported conditions with
\(\mathrm{IS2}_{\mathrm{ROSA}} \approx 7.7\)--\(11.1\); NetLSD achieves
30/30 bootstrap-supported conditions at
\(\mathrm{IS2}_{\mathrm{ROSA}} \approx 3\)--\(4\); resistance distance
reaches 30/30 bootstrap-supported conditions with peak \(R = 3.80\). The
\ac{PH} operator reveals a structurally different regime: \(R < 1\) in
all 30 conditions, with raw \ac{PH} achieving
\(\mathrm{IS2} \approx 3.8\)--\(9.8\) while
\(\mathrm{ROSA}(\mathrm{PH})\) degrades to
\(\mathrm{IS2} \approx 2.9\)--\(3.8\). This attenuation is not a
deficiency of \ac{PH}, which is the second-strongest raw discriminator
after Wasserstein\(_2\), but a consequence of the interaction between
the \ac{JCP} edge ordering and the \ac{PH} distance. \ac{JCP} removes
high-coherence, spectrally sensitive edges early in the filtration; for
\ac{PH}, these removals can quickly produce graph-fragmentation events
that overwhelm the original edit signal with discrete topological
transitions (component births/deaths). Spectral metrics degrade
gracefully under progressive sparsification; \ac{PH} undergoes
discontinuous jumps, violating the smoothness conditions required for
ROSA's amplification guarantee. ROSA's amplification is thus genuinely
operator-dependent: not a universal booster, but a mechanism that
rewards smooth, filtration-compatible base metrics. \ac{ROSA} and
\ac{PH} are complementary: \ac{PH} enjoys strong stability guarantees
\cite{CohenSteiner2007}, while in our benchmark \ac{ROSA} amplifies
weaker spectral metrics to reach \ac{IS2} levels comparable with or
above raw \ac{PH} (median \(\mathrm{ROSA}(\mathrm{W}_2) \approx 8.6\)
vs.~median raw \ac{PH} \(\approx 4.5\)). These absolute \ac{IS2} levels
are operator-specific, so the comparison is indicative rather than a
claim of a common calibrated scale across operators.

\begin{figure}[htbp]
\centering
\includegraphics[width=\textwidth]{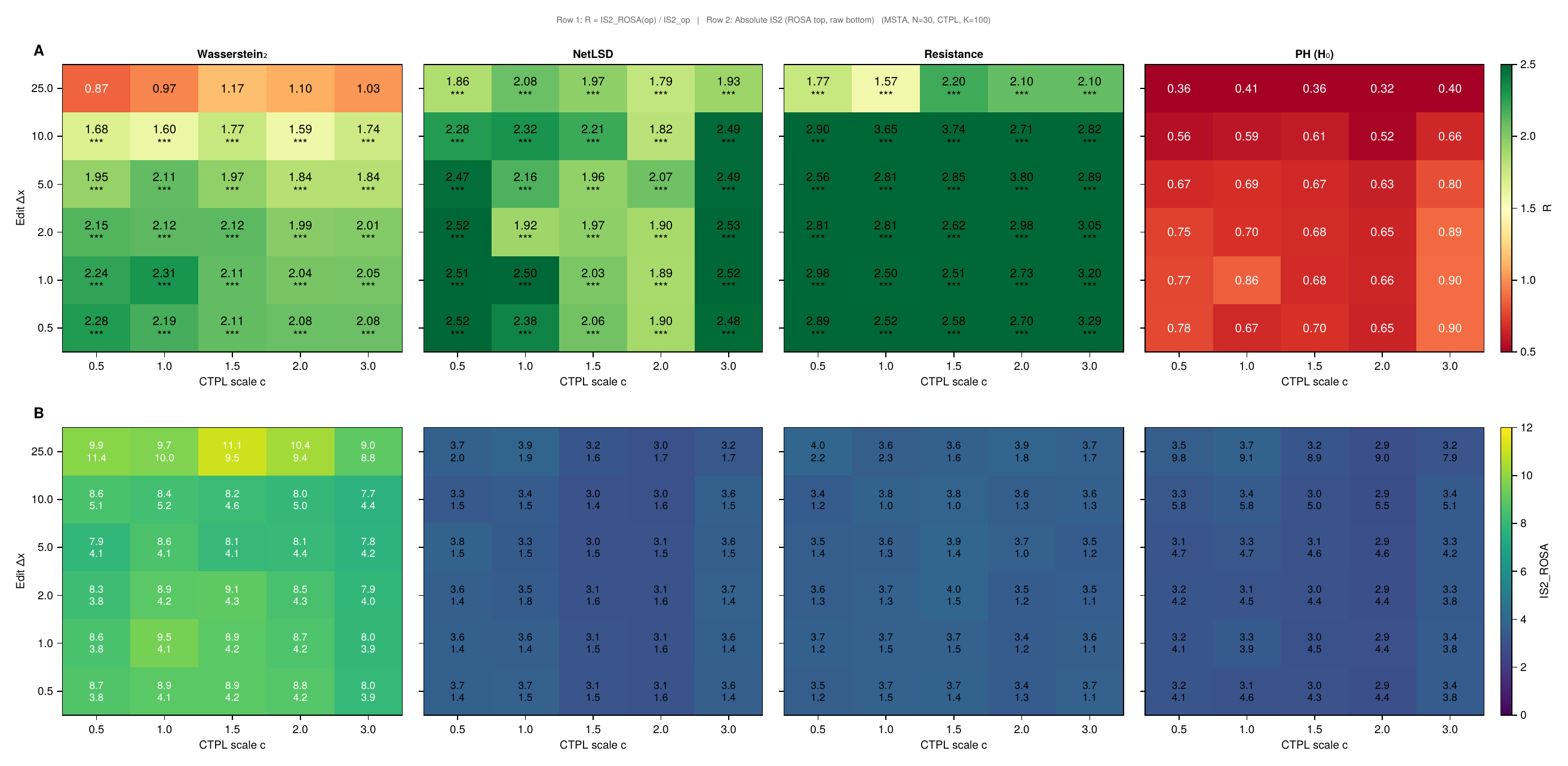}
\caption{Operator generality (MSTA, $N=30$, bridge edit, $K=100$).
\textbf{(A)} Top row: amplification ratio $R = \mathrm{IS2}_{\mathrm{ROSA}} / \mathrm{IS2}_{\mathrm{op}}$.
\textbf{(B)} Bottom row: absolute IS2 (upper number: ROSA; lower number: raw operator).
Wasserstein$_2$ (25/30 bootstrap-supported), NetLSD (30/30), resistance (30/30, peak $R = 3.80$), \ac{PH} $H_0$ (0/30, $R < 1$ everywhere).
\ac{ROSA}(W$_2$) achieves the highest median absolute \ac{IS2} ($\approx 8.6$), exceeding median raw \ac{PH} ($\approx 4.5$).}
\label{fig:operator-comparison}
\end{figure}

\subsubsection{Scaling and Parallelism}\label{sec-results-scaling}

ROSA's filtration decomposes into independent spectral distance
computations per step, enabling straightforward parallelization
(Section~\ref{sec-complexity}). Figure \ref{fig:scaling} measures
speedup across graph vertex counts \(N \in \{128, 256, 512, 1024\}\) and
Julia thread counts \(\{1, 2, 4, 8\}\), with \acs{BLAS} fixed at 4
threads and a constant 64-step filtration on graphs of density
\(|E| \approx 2N\). The measured 8-thread speedups range from
\(1.97\times\) to \(2.42\times\), with most of the gain obtained by 2--4
Julia threads. The bottleneck is likely a combination of \acs{BLAS}
oversubscription and memory-system effects. Each filtration step calls
LAPACK's eigendecomposition, which is itself internally parallel; with
\acs{BLAS} pinned to 4 threads, 8 Julia threads spawn
\(8 \times 4 = 32\) logical \acs{BLAS} threads competing for physical
cores, and the repeated dense linear-algebra workload likely incurs
additional cache pressure and eviction. The constraint
\(K_{\mathrm{Julia}} \times K_{\mathrm{BLAS}} \leq K_{\mathrm{cores}}\)
is a useful rule of thumb for effective scaling of any
\acs{BLAS}-accelerated operator. For operators without internal
parallelism and with modest memory pressure, outer-loop speedup can
approach linear scaling with thread count. The recommended scaling path
for \acs{BLAS}-accelerated operators is process-level parallelism: \(K\)
independent processes, each with its own controlled \acs{BLAS} thread
pool, reducing shared-thread contention (Section~\ref{sec-complexity}).
Orthogonally, incremental rank-1 eigenvalue updates can reduce the
per-step cost from \(O(n^3)\) to \(O(n^2)\) for \(W_2\), yielding a
factor-of-\(m\) algorithmic improvement independent of parallelism.
Panel (B) of Figure \ref{fig:scaling} shows incremental checkpoint
strategies at \(N=1024\) with \(L=64\) filtration steps: the best
secular-equation checkpoint strategy achieves a measured \(1.34\times\)
speedup over sequential eigendecomposition. Panels (C--D) confirm that
strided filtration preserves \(R > 1\) for the tested sizes
\(N\in\{128,256,512\}\), with all tested settings remaining
bootstrap-supported at \(p_{\mathrm{boot}}<0.05\), while substantially
reducing wall-clock time as stride increases. Panels (E--I) extend this
computational story from algorithmic profiling to spectral
approximation. Panels (E--G) show that the cumulative \(W_2^2\) signal
is strongly localized in a small fraction of the spectrum, and that
magnitude-ranked truncation tracks the contribution-ranked optimum
across signal, noise, and edit+noise settings. Panels (H--I) show that
this localization is computationally actionable in the tested regimes:
sparse iterative top-fraction eigensolvers achieve runtime and memory
savings at larger \(N\) for small retained fractions, but the advantage
shrinks and can reverse as the retained spectral fraction grows.

\begin{figure}[htbp]
\centering
\begin{minipage}{0.31\textwidth}
\centering
\includegraphics[width=\textwidth]{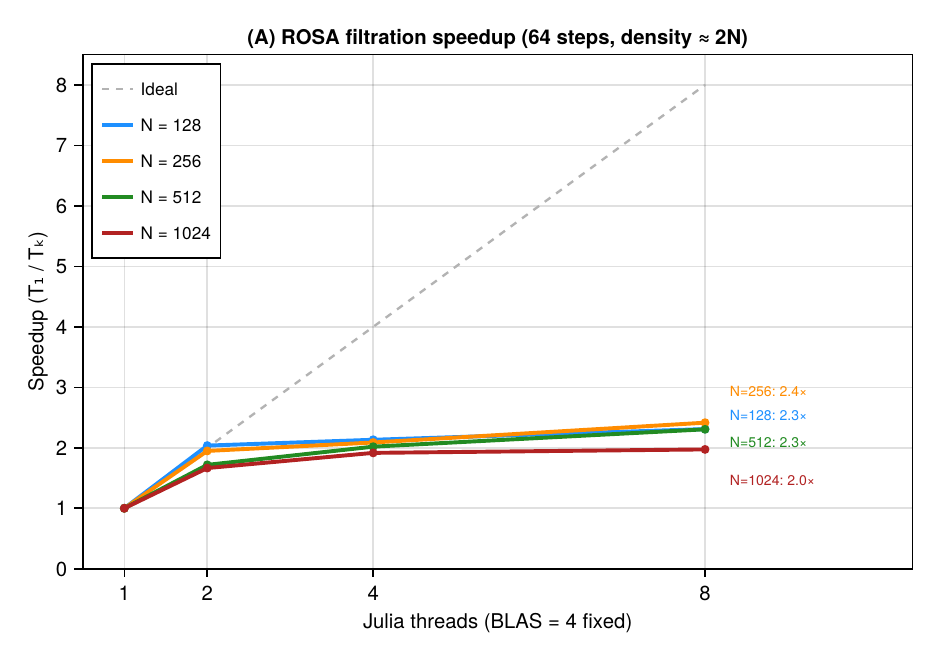}
\end{minipage}\hfill
\begin{minipage}{0.31\textwidth}
\centering
\includegraphics[width=\textwidth]{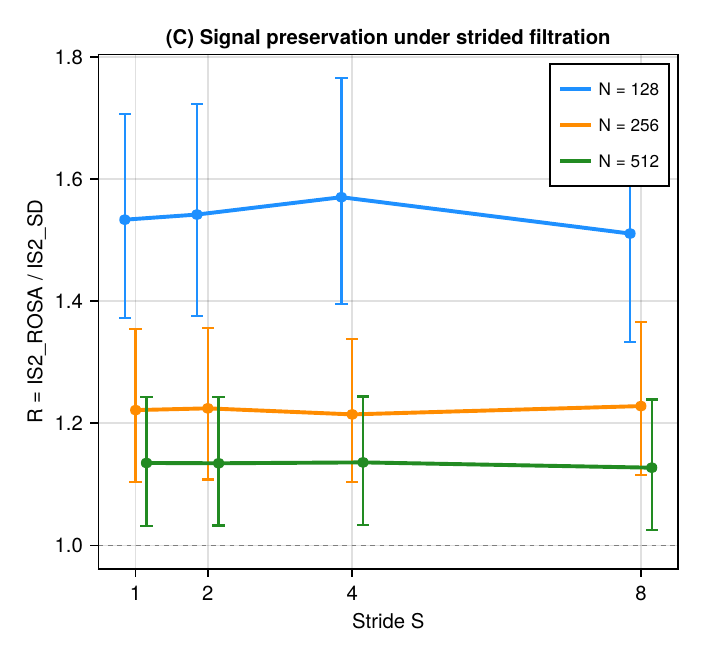}
\end{minipage}
\hfill
\begin{minipage}{0.31\textwidth}
\centering
\includegraphics[width=\textwidth]{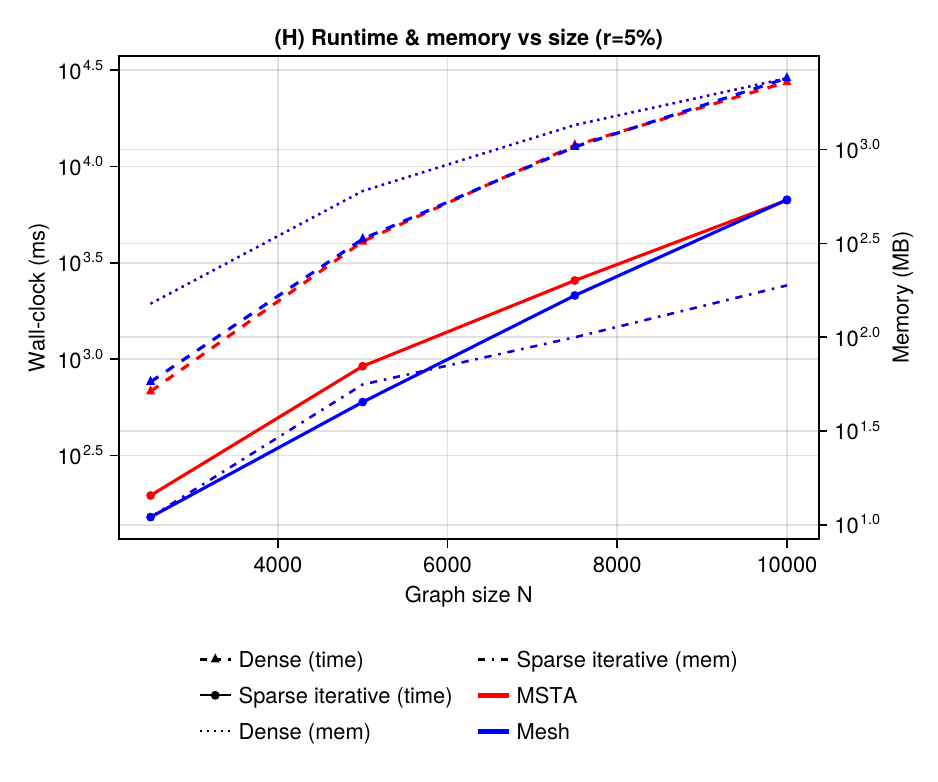}
\end{minipage}

\vspace{0.1em}

\begin{minipage}{0.31\textwidth}
\centering
\includegraphics[width=\textwidth]{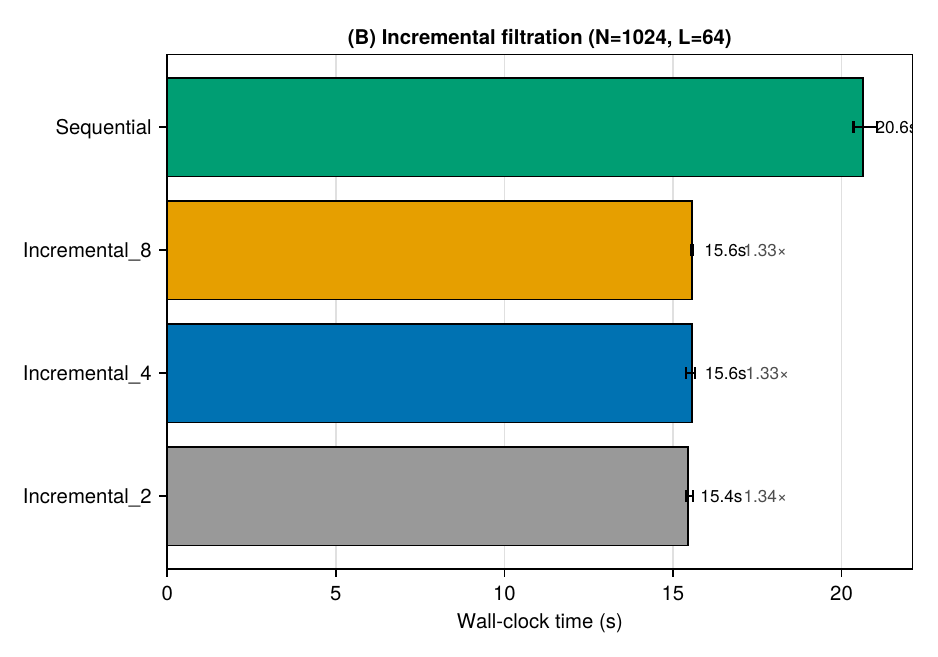}
\end{minipage}\hfill
\begin{minipage}{0.31\textwidth}
\centering
\includegraphics[width=\textwidth]{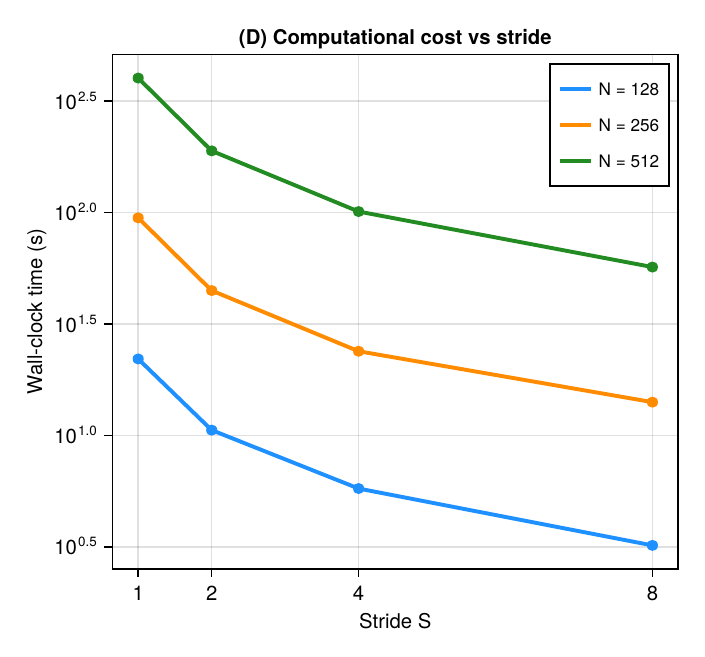}
\end{minipage}
\hfill
\begin{minipage}{0.31\textwidth}
\centering
\includegraphics[width=\textwidth]{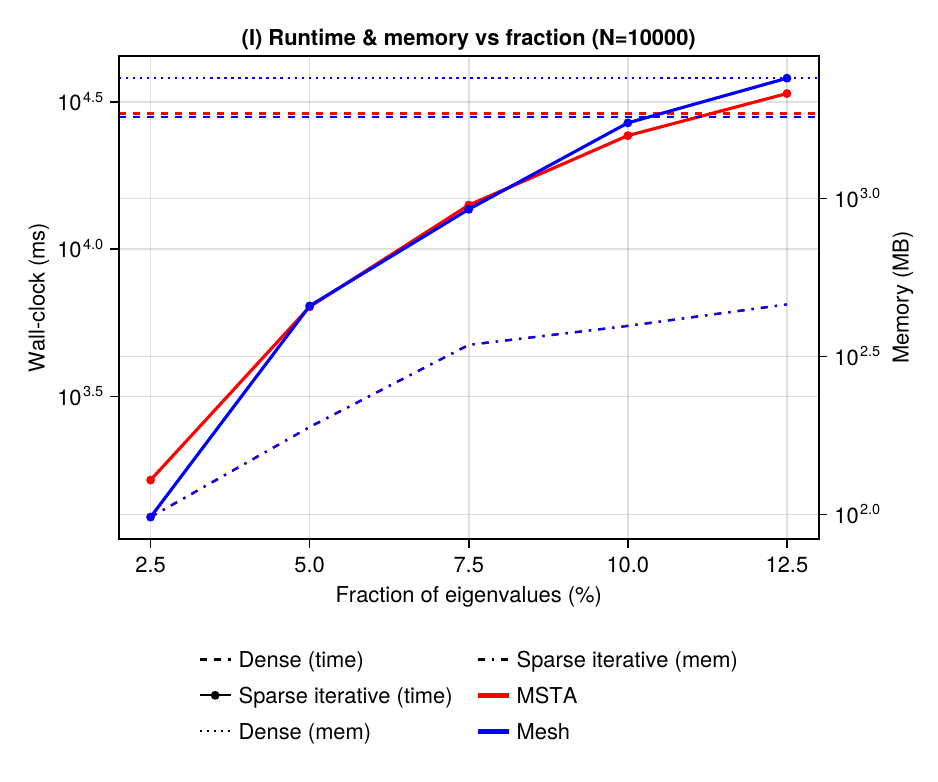}
\end{minipage}

\vspace{0.1em}

\begin{minipage}{0.78\textwidth}
\centering
\includegraphics[width=\textwidth]{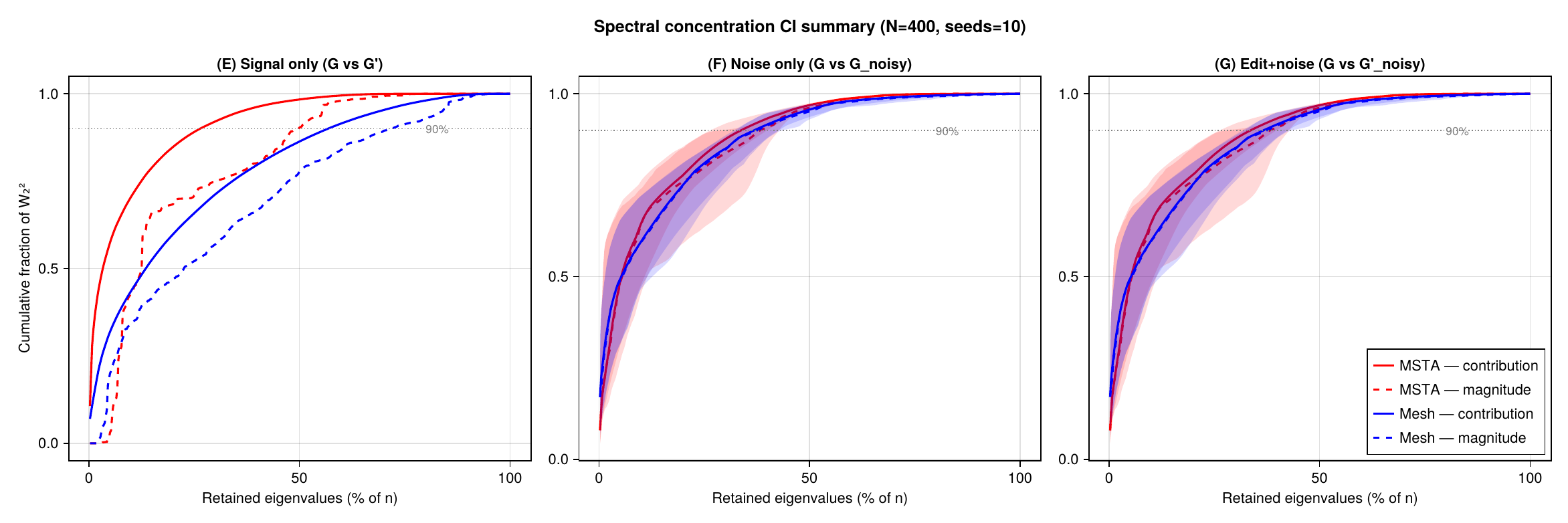}
\end{minipage}
\caption{Computational performance of ROSA.
(A)~Thread-level parallel speedup reaches $1.97$--$2.42\times$ at 8 Julia threads, consistent with BLAS oversubscription and likely additional cache-pressure effects.
(B)~Incremental checkpointing at $N=1024$, $L=64$ yields a best measured $1.34\times$ speedup over sequential eigendecomposition.
(C\textendash D)~For tested $N\in\{128,256,512\}$, strided filtration preserves $R > 1$ while substantially reducing wall-clock time as stride increases.
(E\textendash G)~Most cumulative $W_2^2$ signal is captured by a small spectral fraction across signal, noise, and edit+noise regimes.
(H\textendash I)~Sparse iterative top-fraction eigensolvers turn this localization into runtime and memory savings mainly at small retained spectral fractions.}
\label{fig:scaling}
\end{figure}

\subsubsection{\texorpdfstring{Empirical Validation of Proposition
\ref{thm:snr-guarantee}}{Empirical Validation of Proposition }}\label{sec-results-sensitivity}

We directly estimate the operative quantities of Proposition
\ref{thm:snr-guarantee} for the \(W_2\)+\acs{JCP}+\ac{CTPL}
instantiation across the same \(\Delta x \times c\) grid as Figure
\ref{fig:heatmaps}. As noted after Proposition \ref{thm:snr-guarantee},
we validate the operative variance and mean-shift conditions (A2) and
(A3) directly rather than introducing stronger ordering-based sufficient
conditions. The signed mean-shift condition (A3) holds in 25 of 30
cells, while \(R > 1\) in 28 of 30 cells. All five A3 failures occur at
\(\Delta x = 25\): two also have \(R \le 1\), while three still have
\(R>1\). This three-cell discrepancy reflects the fact that (A3) is a
sufficient operative condition, not a necessary one: some cells still
improve the \ac{IS2} score even when the signed mean-shift bound is not
met. At \(M=32\), the empirical medians decompose the diagnostic into
\(\sqrt{33}/\beta \approx 1.49\) from aggregation after correlated
variance growth and \(1+\gamma \approx 1.42\) from signed mean shift;
their product \(R_{\mathrm{pred}} \approx 2.12\) matches the observed
median \(R \approx 2.0\). This agreement should be read as an empirical
consistency check on the operative quantities in Proposition
\ref{thm:snr-guarantee}, not as a sharp out-of-sample prediction;
Appendix Figure \ref{fig:is2-holdout} reports a repeated measure/holdout
split sweep that quantifies the stability and variance of this estimate.
Full per-cell results are reported in Table \ref{tab:sensitivity}.

\subsubsection{Stochastic ROSA: Ordering Uncertainty and the Swap
Probability}\label{sec-results-stochastic}

The operative sufficient conditions (A2)--(A3) are stronger than
necessary (Section \ref{sec-results-sensitivity}), motivating stochastic
\ac{ROSA} (Section \ref{sec-stochastic-rosa}) as a practical hedge
against ordering uncertainty. We evaluate stochastic \ac{ROSA} across
three representative noise regimes (\(c \in \{0.5, 1.5, 3.0\}\)) at
fixed signal strength (\(\Delta x = 2\)), sweeping the swap probability
\(p \in \{0, 0.05, 0.10, 0.20, 0.30, 0.50, 0.70\}\) with \(K=100\) noise
realizations and \(n_{\mathrm{trials}}=30\) ordering perturbations per
realization. The deterministic case \(p=0\) is equal by construction,
and all 18 positive-swap settings improve over the deterministic
baseline. This dominance is reported descriptively; we do not perform a
separate hypothesis test of stochastic-versus-deterministic improvement.
The gain is largest at high noise (\(c=3.0\)): \(R\) increases from 2.11
to 2.35 at the best tested swap probability, an 11.6\% improvement. At
low noise (\(c=0.5\)) the best gain is 5.5\%; at medium noise
(\(c=1.5\)) the effect remains modest, peaking at 2.4\%. Table
\ref{tab:stochastic-summary} reports the deterministic baseline, the
recommended compromise setting \(p=0.3\), and the best value observed in
the full sweep; the complete sweep remains in Table
\ref{tab:stochastic}. The swap probability \(p = 0.3\) is a compromise
setting that captures part of the gain across regimes without pushing
the ordering perturbation to its most aggressive setting; it is not the
best observed setting in the medium-noise sweep. The stochastic
ordering-width summary is the mean 95\% interval width of \ac{ROSA} sums
under ordering perturbations, giving a direct ordering-sensitivity
diagnostic.

\subsection{Real-World Graph
Geometries}\label{real-world-graph-geometries}

\subsubsection{Controlled Dynamic Changes on Real Biological Graph
Geometries}\label{sec-results-real-geometry}

To test whether \ac{ROSA} remains useful in time-resolved monitoring on
empirically derived graph geometry, we construct two controlled dynamic
experiments on real biological substrates and evaluate consecutive
comparisons \(d(G_t, G_{t+1})\) under the same \ac{CTPL} noise model and
parameters. The goal here is deliberately modest: not universal recovery
of underlying change magnitude, which is graph- and spectrum-dependent,
but consistent amplification of weak time-varying change above direct
spectral comparison. The first substrate is a mitochondrial skeleton
extracted from yeast confocal microscopy (MitoTNT test data
\cite{wang2023mitotnt}, frame 040, BSD-3 license), reduced to its
largest connected component (\(N = 38\), \(E = 46\)). We move a junction
hub (\(v_{22}\)) toward a nearby branch target (\(v_{31}\)) with three
bounded displacement schedules, namely seesaw, quadratically
accelerating, and oscillating, all capped at \(D_{\max}=5~\mathrm{px}\).
The second substrate is a junction graph extracted from a TissueMiner
epithelial \ac{ROI} \cite{etournay2016tissueminer} (\(N = 34\),
\(E = 43\)). There we move one junction (\(v_{15}\)) toward a
neighboring target (\(v_{14}\)) with the same three schedules, capped at
\(D_{\max}=4~\mathrm{px}\). Appendix Figure \ref{fig:real-bio-setup}
shows the two substrates and the selected edited/target vertices. In
both cases, the edit direction and amplitude are chosen so that the
geometry remains well-posed throughout the trajectory: the global
minimum edge length stays unchanged under the imposed motion, and the
noise limit remains fixed at \(w_{\min}/3\). All runs use \(K = 50\)
noise realizations at \(c = 1.5\), reduced from the synthetic default
\(K=100\) to keep the larger dynamic benchmark suite computationally
manageable while retaining the default 10,000-resample bootstrap
intervals. Figure \ref{fig:real-bio-dynamics} reports the pairwise
amplification ratio \(R\) with 95\% bootstrap confidence intervals.
Across all three schedules and both substrates, \ac{ROSA} maintains
\(R > 1\) throughout the trajectory. For the mitochondrial graph, the
mean pairwise amplification ranges from \(1.94\) to \(2.01\) across the
three schedules, with per-step values between \(1.38\) and \(2.60\). For
the cell junction graph, the mean pairwise amplification ranges from
\(2.01\) to \(2.05\), with per-step values between \(1.57\) and
\(2.58\). All pairwise steps are bootstrap-supported at
\(p_{\mathrm{boot}}<0.05\). The two substrates differ substantially in
geometry and biological interpretation, yet both show the same practical
pattern: \ac{ROSA} makes local dynamic changes more visible over time
than direct spectral comparison alone.

\subsubsection{Identifying Protein Structure Variation Under Overlapping
Noise}\label{identifying-protein-structure-variation-under-overlapping-noise}

The preceding biological trajectories impose a controlled synthetic edit
on real graph geometry. The protein conformer row serves a different
purpose: it asks whether \ac{ROSA} still has a usable regime when the
signal is a naturally occurring geometric deformation rather than one we
inject ourselves. We use the ubiquitin NMR ensemble 1D3Z
\cite{cornilescu1998validation}, which contains ten experimentally
derived conformers of the same 76-residue protein. These are converted
to fixed-topology C\(\alpha\) contact graphs on the shared 242-edge
contact core, so the experimental variable is genuine conformational
variation under added Gaussian coordinate noise, not a synthetic
topology edit. This is also a harder test of locality than the earlier
trajectory experiments. The conformer differences are nearly global but
heavy-tailed, with a mean pairwise RMSD of about \(1.96\),\AA, while the
graph-calibrated full-noise baseline is already about \(1.66\),\AA~in
RMSD units. In other words, the added noise overlaps substantially with
the natural signal scale. In the single-layer setting, \texttt{1\ vs\ x}
comparisons still retain median \(R > 1\) throughout the tested Gaussian
sweep up to \(1.5\sigma\), but the number of bootstrap-supported pairs
decreases from \(9/9\) to \(5/9\) as noise rises. These \ac{PDB}
boundary sweeps use \(K=50\) noise realizations and 2,000-resample
bootstrap intervals, reduced from the 10,000-resample intervals used in
the main trajectory experiments to keep the expanded boundary scan
tractable. The message of this panel is therefore not just proof of
principle. It shows the expected boundary behavior: \ac{ROSA} remains
useful while the conformational signal exceeds the effective noise
floor, but confidence degrades once the two scales strongly overlap. The
layered comparison design matters as well. At fixed noise
\(\sigma = 0.5\), all four tested layered patterns remain above \(R=1\),
but the gain differs materially across constructions. Across the full
layered sweep, the tested contrasts stay above \(R=1\) in 22 of 24
settings, while bootstrap support decreases at higher noise. The
strongest \texttt{xx\ vs\ yy} construction also crosses below unity by
the highest noise level. The conformer result is therefore deliberately
framed as a boundary map rather than a universal success claim: on
fixed-topology conformer graphs, \ac{ROSA} can amplify noisy spectral
discrimination, but the regime depends on both noise scale and layered
comparison design. The final two rows extend the mitochondrial
experiment to layered graphs formed by stacking temporally offset copies
of the base graph and coupling corresponding vertices across layers
\cite{DeDomenico2013multilayer}; the layer span parameter
\(k \in \{2,3,4\}\) denotes the number of coupled copies. Using ramp and
pulse patterns, we sweep both displacement magnitude and noise shape
while keeping the noise limit fixed at \(w_{\min}/3\) and re-tuning the
\ac{CTPL} tempering parameter for each \(c\). In all 36 layered
conditions, \ac{ROSA} retains \(R > 1\), showing that temporal lifting
preserves the amplification gain across both moderate displacement
variation and substantial changes in noise shape. As an additional
real-world construction check, Supplementary Figure
\ref{fig:retina-multiplex-jcp} represents ten manually segmented TREND
retinal vessel images as two coupled edge-attribute layers and reports
raw \ac{ROSA} response over \(\mathrm{JCP}_k\). This noiseless cohort
diagnostic tests the multiplex encoding and the effect of neighbourhood
radius; it is not a claim of superiority over direct spectral comparison
or of pathology detection.

\subsubsection{Signal-Recovery Limits on Dense
Connectomes}\label{sec-results-fmri-boundary}

To probe a real-world-data boundary case outside the geometric setting,
we consider a representative resting-state fMRI connectome pair from the
Autism Brain Imaging Data Exchange (ABIDE) \cite{dimartino2014abide},
using the ABIDE Preprocessed derivative at the YALE site
(\texttt{cpac/filt\_noglobal/rois\_ho}).\footnote{\href{https://preprocessed-connectomes-project.org/abide/}{ABIDE Preprocessed portal}.}
These connectomes are represented first as complete weighted graphs
whose edge weights are affine-transformed \ac{ROI} correlations. On this
raw complete-connectome representation, all four tested heuristics are
neutral or adverse: \ac{JCP}, effective resistance, weighted-degree
asymmetry, and raw edge-weight ordering all give \(R \lesssim 1\) within
uncertainty. This is a structural failure mode rather than a tuning
issue. In complete or near-complete weighted graphs, edge neighborhoods
cease to be local, so the filtration becomes close to a global rescaling
of the base distance rather than a locally informative edge-removal
process. We then restore locality by sparsifying one representative
subject to a symmetric \(k\)NN\(\cup\)\acs{MST} graph with \(k=8\)
(defined in the supplementary material), and apply nested random
edge-support prefixes with mild Gaussian edge-weight perturbations. In
this regime amplification reappears, but not monotonically in the
edited-edge count
\(n_{\mathrm{edit}} \in \{1, 2, 4, 8, 16, 32, 64, 128, |E|/2, |E|\}\);
full configuration is given in Table \ref{tbl:experiment-config}. Small
disconnected edited supports weaken as more isolated witnesses are
added, intermediate partially merged supports approach \(R \approx 1\),
larger coherent edited regions restore amplification, and fully global
edited support relaxes back toward \(R \approx 1\). The topology of the
edited support therefore helps explain the recovery; support size alone
is not a sufficient description of the regime. Figure
\ref{fig:fmri-boundary} summarizes this hard-negative plus boundary
result, which is consistent with the scope discussion after Proposition
\ref{thm:snr-guarantee}: \ac{JCP}-based amplification depends on the
persistence of local filtration structure.

\begin{figure}[htbp]
\centering
\includegraphics[width=\textwidth]{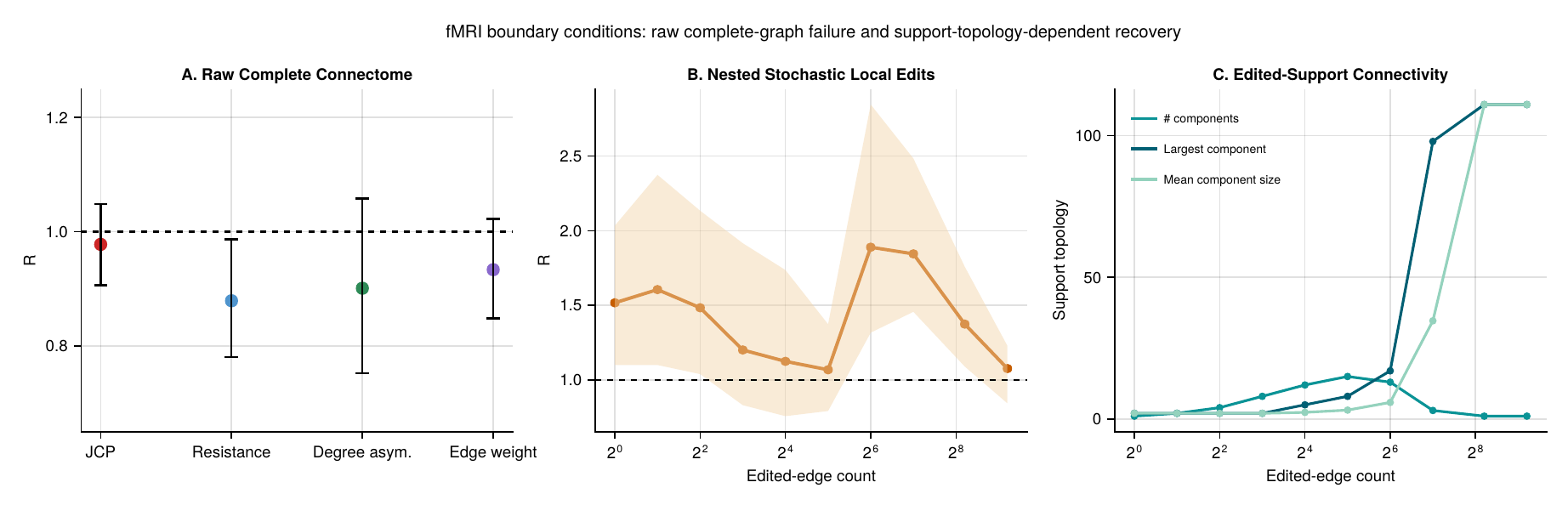}
\caption{Signal-observability limits for \ac{ROSA} on dense fMRI connectomes.
(A) On a raw complete YALE ABIDE representative pair, none of four tested heuristics (\ac{JCP}, effective resistance, weighted-degree asymmetry, raw edge weight) yields convincing amplification above the direct spectral distance.
(B) On a sparsified $k$NN$\cup$\acs{MST} representation ($k=8$), nested random edge-support prefixes restore $R>1$ with a shaded 95\% bootstrap interval, but in a non-monotone way.
(C) The non-monotonicity aligns with the topology of the edited support: many tiny disconnected witnesses drive $R$ toward $1$, a few larger coherent components restore amplification, and fully global edited support again suppresses locality.
These results mark a real-world-data boundary case rather than a contradiction of the theory: complete or near-complete correlation graphs erase the local filtration structure on which \ac{JCP}-based amplification depends.}
\label{fig:fmri-boundary}
\end{figure}

\begin{figure}[htbp]
\centering
\includegraphics[width=\textwidth]{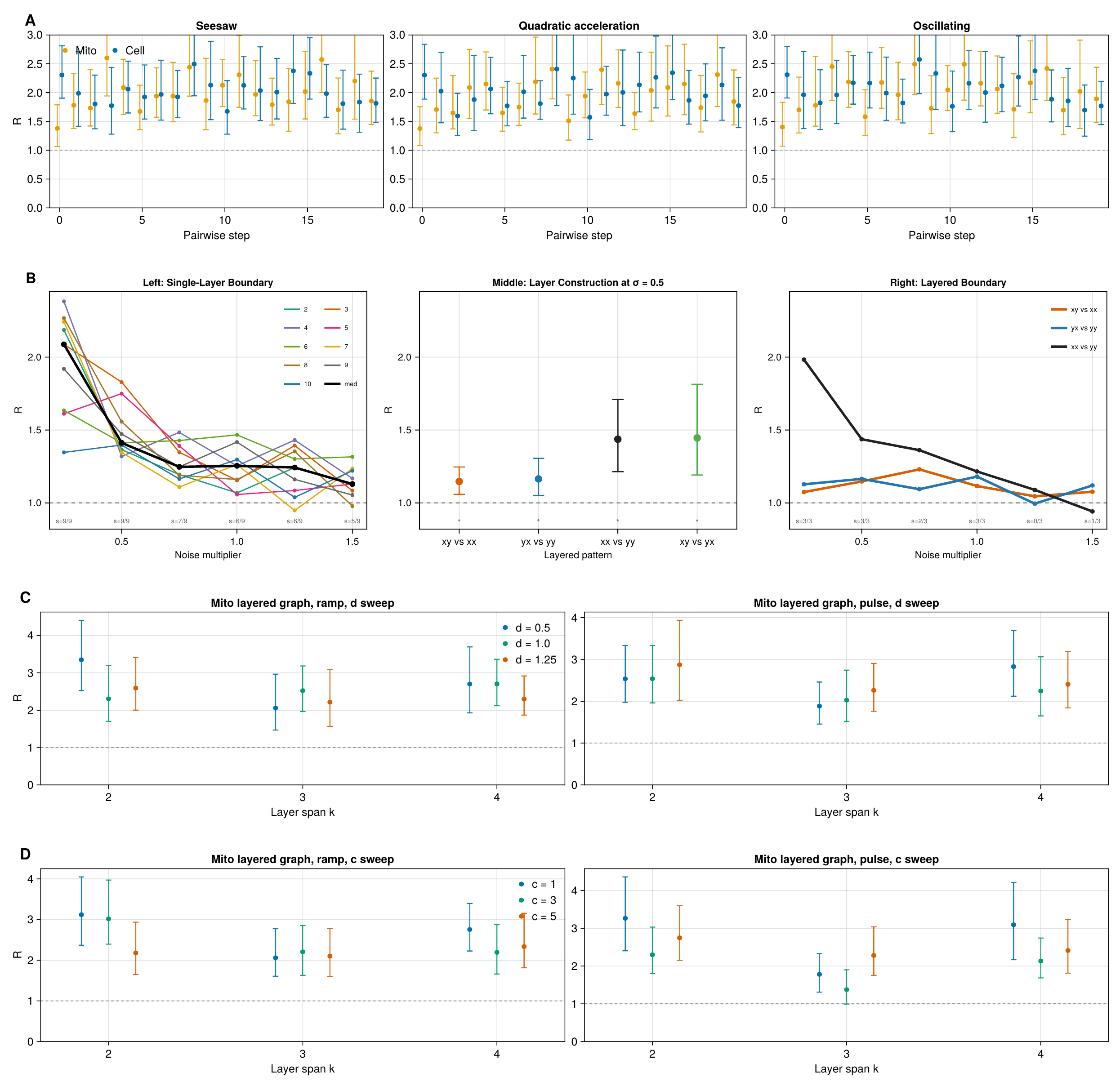}
\caption{ROSA amplification over controlled dynamic changes on real biological graph geometries.
Row A: pairwise amplification ratio $R$ with 95\% bootstrap confidence intervals for mitochondrial (orange) and cell-junction (blue) substrates under three bounded displacement schedules.
Row B: natural conformer-variation boundary experiment on ubiquitin NMR contact graphs (\acs{PDB} 1D3Z). Left: single-layer `1 vs x` amplification boundary under Gaussian coordinate noise, annotated with bootstrap-support counts $s=a/b$. Middle: effect of layered comparison design at fixed noise $\sigma=0.5$, with 95\% bootstrap confidence intervals and explicit `*`/`NS` support labels. Right: layered boundary curves for three symmetric contrasts, again annotated with support counts $s=a/b$.
Rows C--D: mitochondrial layered-graph extension for layer spans $k \in \{2,3,4\}$, shown for ramp and pulse patterns under displacement and noise-shape sweeps.
Across direct pairwise monitoring, the fixed-topology conformer boundary test, and the layered mitochondrial extension, \ac{ROSA} retains a substantial real-data regime with $R > 1$, while the conformer row makes clear that this gain is noise- and construction-dependent rather than unconditional.}
\label{fig:real-bio-dynamics}
\end{figure}

\begingroup
\small
\begin{table}[htbp]
\centering
\small
\caption{Stochastic ROSA summary across the three representative noise regimes at fixed $\Delta x = 2$. $R_{\mathrm{det}}$ is deterministic ROSA, $R_{0.3}$ uses the recommended swap probability $p=0.3$, and $R_{\max}$ is the best stochastic value observed over the tested sweep $p \in \{0,0.05,\ldots,0.7\}$.}
\label{tab:stochastic-summary}
\begin{tabular}{r r r r r r}
\hline
$c$ & $R_{\mathrm{det}}$ & $R_{0.3}$ & $\Delta R_{0.3}$ (\%) & $p_{\max}$ & $R_{\max}$ \\
\hline
0.5 & 2.12 & 2.21 & +4.2 & 0.70 & 2.24 \\
1.5 & 1.95 & 1.98 & +1.6 & 0.70 & 2.00 \\
3.0 & 2.11 & 2.24 & +6.2 & 0.70 & 2.35 \\
\hline
\end{tabular}
\end{table}

\endgroup

\section{Discussion and Conclusion}\label{sec-discussion}

\ac{ROSA} is best understood as an \ac{IS2}-amplifying representation
for graph change analysis. Its role is not to replace the base metric
with a universally stronger one, but to convert weak, localized graph
edits into a more stable and more observable graph-comparison signal by
integrating the base distance along a filtration. In this paper we aimed
to map explicitly where and why the present instantiation works and
where it fails; the operative sufficient conditions are linked closely
to geometric signature, ranking heuristic, and noise propagation (in the
vertex noise models). For the \(W_2\)+\acs{JCP}+\ac{CTPL} instantiation,
\ac{ROSA} can achieve amplification ratios \(\approx 2\) across
synthetic and real biological graphs, while the sensitivity analysis
shows that the operative assumptions of Proposition
\ref{thm:snr-guarantee} match this amplification with the correct
direction and approximate scale. This framing also clarifies the role of
the new trajectory results. Trajectory monitoring is not a separate
mechanism from amplification; it is a downstream consequence of having a
cleaner signal. When only noisy pairwise graph comparisons are
available, direct spectral distance can make diminishing or alternating
edit-magnitude patterns too unstable to monitor, whereas \ac{ROSA} can
preserve enough structure for those patterns to remain trackable. The
trajectory experiments therefore answer the practical question
motivating the paper: if the goal is to track change in noisy
graph-structured data, does amplification produce a signal that is
easier to monitor over time? For the informative nonlinear and
oscillatory regimes considered here, the answer is yes. Likewise, the
failure of \ac{ROSA} with persistent homology in our experiments is a
scope boundary rather than a contradiction: the filtration and heuristic
used here are designed for spectral sensitivity, not for topological
event tracking. On the theoretical side, the analysis still relies on
empirically validated assumptions rather than a full first-principles
derivation for the present instantiation, and closed-form links between
ordering uncertainty, variance, and amplification remain open.
Interesting future directions can explore trainable local operators that
induce more stable filtrations. From a practical perspective, \ac{ROSA}
is most valuable in the low-signal regime where direct graph distances
are least informative. Its operator-on-distances design makes it
compatible with workflows built around graph dissimilarity measures, and
the computational burden can be reduced substantially through stride,
truncation, incremental eigensolvers for spectral distances, and
process-level parallelism for the more generic case. Taken together, the
theory and experiments support \ac{ROSA} as a useful amplification
operator for monitoring weak structural change in noisy graphs, while
making clear that its gains are conditional, diagnosable, and
application-dependent rather than universal.

\section*{Acknowledgments}

The mitochondrial skeleton geometry used in Figure \ref{fig:real-bio-dynamics} was derived from the MitoTNT test dataset \cite{wang2023mitotnt}, available at \url{https://github.com/pylattice/MitoTNT} under the BSD-3-Clause license.
The epithelial junction geometry was derived from the TissueMiner demo database \cite{etournay2016tissueminer}, available at \url{https://github.com/mpicbg-scicomp/tissue_miner} under the BSD-3-Clause license.
The protein conformer structure (\acs{PDB} 1D3Z) was obtained from the RCSB \acl{PDB} at \url{https://www.rcsb.org/structure/1D3Z} under the CC0~1.0 public domain dedication; the underlying NMR structure was originally reported by Cornilescu et al.~\cite{cornilescu1998validation}.
The fMRI connectome data in Figure \ref{fig:fmri-boundary} were obtained from the ABIDE~I Preprocessed initiative \cite{dimartino2014abide}, available at \url{https://fcon_1000.projects.nitrc.org/indi/abide/} under the CC~BY-NC-SA~3.0 license.
The retinal fundus images and manual vessel annotations used in Supplementary Figure \ref{fig:retina-multiplex-jcp} were obtained from the TREND database \cite{popovic2021trenddata}, available from Zenodo under the CC BY 4.0 license at \url{https://doi.org/10.5281/zenodo.4521044}.

F.S.\ and B.C.\ acknowledge funding from a UKRI Future Leaders Fellowship MR/T043571/1.

\textbf{AI usage declaration.} During preparation of this manuscript,
the authors used generative AI tools to assist with software
implementation and review, including boilerplate code and plotting
scripts; exploration, discussion, and critical checking of mathematical
arguments and proofs; experiment-planning and manuscript-review tasks;
and manuscript production, including Quarto configuration, formatting,
and language editing. All AI-assisted material incorporated into the work
was critically evaluated and independently verified by the authors and,
where applicable, computationally tested and revised. The research
questions, selection of methods and experiments, interpretation of
results, and final editorial decisions remained with the human authors.
The authors assume responsibility for all content.

% Force bibliography here, before supplementary material
\bibliography{references.bib}
% Prevent Quarto from emitting a duplicate bibliography at end of document
\renewcommand{\bibliography}[1]{}

\clearpage
\mbox{}\thispagestyle{empty}
\clearpage

\setcounter{section}{0}
\counterwithout{table}{section}
\counterwithout{figure}{section}
\counterwithout{equation}{section}
\setcounter{table}{0}
\setcounter{figure}{0}
\setcounter{equation}{0}
\renewcommand{\thesection}{S\arabic{section}}
\renewcommand{\thetable}{S\arabic{table}}
\renewcommand{\thefigure}{S\arabic{figure}}
\renewcommand{\theequation}{S\arabic{equation}}

\section*{Supplementary Material}
\addcontentsline{toc}{section}{Supplementary Material}

\section{Notation}

{\footnotesize
\begin{longtable}{lll}
\caption{Notation summary for the main text and supplementary algorithms.\label{tbl:notation-summary}}\\
\hline
\textbf{Symbol} & \textbf{Description} & \textbf{Defined in} \\
\hline
$G=(V,E)$ & Graph with vertex set $V$ and edge set $E$ & \S\ref{sec-filtration} \\
$L = D - A$ & Combinatorial Laplacian ($A$ = weighted adjacency) & \S\ref{sec-filtration} \\
$\langle X,Y\rangle_F,\ \|X\|_F$ & Frobenius inner product and norm & \S\ref{sec-jacobian} \\
$d(G,H)$ & Base graph distance & \S\ref{sec-filtration} \\
$d_{\mathrm{SD}}(G,G')$ & Wasserstein-2 spectral distance & \S\ref{sec-filtration} \\
$f$ & Edge-ranking heuristic & \S\ref{sec-filtration} \\
$\rho_e$ & Jacobian-coherence score for edge $e$ & \S\ref{sec-jacobian} \\
$\mathcal{A}_f$ & ROSA amplification operator, $d \mapsto d^{\mathrm{ROSA}}_f$ & \S\ref{sec-filtration} \\
$d^{\mathrm{ROSA}}_f(G,H)$ & ROSA-C amplified distance, $\sum_k d(G_k, H_k)$ & \S\ref{sec-filtration} \\
$m$ & Filtration depth (number of edge removals) & \S\ref{sec-filtration} \\
$s$ & Stride parameter for strided filtration ($s=1$: full) & \S\ref{sec-complexity} \\
$\pi^X$ & Edge permutation induced by $f$ on graph $X$ & \S\ref{sec-snr-amplification} \\
$d_K(\pi,\rho)$ & Kendall--tau distance between permutations & \S\ref{sec-snr-amplification} \\
$\Delta_f^{\mathrm{signal}}$ & Signal-induced filtration-path divergence & \S\ref{sec-snr-amplification} \\
$\Delta_f^{\mathrm{noise}}$ & Noise-induced filtration-path divergence & \S\ref{sec-snr-amplification} \\
$\mathrm{IS2}_d$ & Inverse coefficient of variation stability score of metric $d$ & \S\ref{sec-snr-amplification} \\
$R$ & IS2 amplification ratio, $\mathrm{IS2}_{\mathrm{ROSA}} / \mathrm{IS2}_{SD}$ & \S\ref{sec-results} \\
$c$ & \ac{CTPL} scale parameter (power-law regime) & \S\ref{sec-setup} \\
$\lambda$ & \ac{CTPL} exponential tempering parameter & \S\ref{sec-setup} \\
$r_{\max}$ & Maximum admissible vertex displacement & \S\ref{sec-setup} \\
$K$ & Number of noise realizations & \S\ref{sec-setup} \\
\hline
\end{longtable}
}

\section{Experiment Configuration}

{\footnotesize
\setlength{\tabcolsep}{3pt}
\begin{longtable}{@{}>{\raggedright\arraybackslash}p{0.16\textwidth} >{\raggedright\arraybackslash}p{0.21\textwidth} >{\raggedright\arraybackslash}p{0.22\textwidth} >{\raggedright\arraybackslash}p{0.17\textwidth} >{\raggedright\arraybackslash}p{0.16\textwidth}@{}}
\caption{Compact summary of the main experimental configurations used in the manuscript. Here $\Delta x$ denotes the imposed edit magnitude, $K$ the number of noise realizations, $L$ the retained filtration length when truncation is used, and $s$ the stride. CTPL tempering is auto-calibrated per $c$ to satisfy $\Pr[R > r_{\max}] \le 0.05$. Reproduction scripts are available at \url{https://github.com/systems-mechanobiology/ROSA/tree/main/reproduction}.}\label{tbl:experiment-config}\\
\hline
\textbf{Block} & \textbf{Graph / substrate} & \textbf{Signal sweep} & \textbf{Noise / calibration} & \textbf{Evaluation settings} \\
\hline
\endfirsthead
\hline
\textbf{Block} & \textbf{Graph / substrate} & \textbf{Signal sweep} & \textbf{Noise / calibration} & \textbf{Evaluation settings} \\
\hline
\endhead
Heatmaps (Figure~\ref{fig:heatmaps}, Row 1) &
MSTA baseline, $N=30$, $E=34$, seed $42$, $d_{\min}=8$ &
$\Delta x \in \{0.5,1,2,5,10,25\}$ across bridge, leaf, and small-Fiedler-magnitude hub edit sites &
CTPL vertex noise with $c \in \{0.5,1,1.5,2,3\}$; $r_{\max}=0.45\,w_{\min}$ &
$K=100$; full ROSA-C; JCP ranking; empirical spectral $W_2$ on combinatorial-Laplacian spectra \\

Noise robustness (Figure~\ref{fig:heatmaps}, Row 2) &
Same MSTA baseline and bridge-adjacent edit &
$\Delta x/\sigma_{\mathrm{gauss}} \in \{0.5,1,2,5,10,25\}$ &
CTPL, Gaussian vertex, and Gaussian edge noise with multiplier $m_{\mathrm{noise}} \in \{0.25,0.5,1,2,4\}$ &
$K=100$; full ROSA-C; JCP ranking; same base distance and bootstrap-support criterion \\

Graph density sweep (Figure~\ref{fig:graph-diversity}) &
Same 30-point cloud; MST to full Delaunay densification, $E \in \{29,\dots,78\}$ &
$\Delta x \in \{0.5,1,2,5,10,25\}$; three spatial edit classes fixed from the MSTA baseline &
CTPL with fixed $c=1.5$ &
Row 1: full filtration. Row 2: fixed-step control with $L=27$ retained steps. $K=100$ \\

Operator comparison (Figure~\ref{fig:operator-comparison}) &
MSTA baseline, bridge-adjacent edit &
$\Delta x \in \{0.5,1,2,5,10,25\}$ &
CTPL with $c \in \{0.5,1,1.5,2,3\}$ &
$K=100$; JCP ordering; operators: spectral $W_2$, NetLSD, resistance, PH($H_0$) \\

Scaling / incremental panels (Figure~\ref{fig:scaling}) &
Random geometric graphs with density $\approx 2N$, $N \in \{128,256,512,1024\}$ &
Panel B: fixed $L=64$; Panels C--D: stride $s \in \{1,2,4,8\}$ &
CTPL benchmark setting reused from the main synthetic setup &
Panel A: Julia thread sweep $\{1,2,4,8\}$; Panel B incremental eigensolver at $N=1024$; Panels C--D compare $R$ and runtime versus stride \\

fMRI signal-recovery limits (Figure~\ref{fig:fmri-boundary}) &
ABIDE Preprocessed (YALE; C-PAC, no-global, HO atlas); complete weighted connectomes from affine-transformed ROI correlations, plus a symmetric $k$NN+MST representative graph with $k=8$ &
Raw representative-pair probe across four heuristics; sparse-graph support sweep with edited support sizes 1, 2, 4, 8, 16, 32, 64, 128, $E/2$, and $E$ &
Representative-pair probe: iid Gaussian edge noise with scale tied to the remaining headroom to $1$; support sweep: local Gaussian multiplicative edge edits with mean $0.01$ plus mild external Gaussian edge noise &
Representative-pair heuristic probe: $K=30$, $L=40$; support sweep: JCP only, $K=30$, $L=40$; reports $R$, confidence intervals, and edited-support connectivity diagnostics \\

Real biological dynamics (Figure~\ref{fig:real-bio-dynamics}, Row A) &
Mito graph ($N=38$, $E=46$) and cell junction graph ($N=34$, $E=43$) &
Mito: $22 \rightarrow 31$, $D_{\max}=5$; Cell: $15 \rightarrow 14$, $D_{\max}=4$; schedules = seesaw, accelerating, oscillating &
CTPL with $c=1.5$; noise limit fixed at $w_{\min}/3$ &
Pairwise comparisons $d(G_t,G_{t+1})$; $K=50$; geometry audited to preserve minimum edge length \\

Fixed-topology conformer boundary (Figure~\ref{fig:real-bio-dynamics}, Row B) &
Ubiquitin NMR conformers (PDB 1D3Z), C$\alpha$ contact graphs; single-layer core graph with $N=76$, $E=242$; layered two-copy construction for selected pairwise probes &
Single-layer: reference conformer 1 versus conformers $2\ldots 10$. Layered: strongest conformer pair with patterns \texttt{xy vs xx}, \texttt{yx vs yy}, \texttt{xx vs yy}, \texttt{xy vs yx} &
Gaussian vertex-coordinate noise with graph-calibrated base scale $\sigma_{\mathrm{gauss}}$, swept by multipliers $0.25$, $0.5$, $0.75$, $1.0$, $1.25$, and $1.5$ &
Single-layer: $K=50$, $L=50$, reports per-pair $R$, \ac{IS2}, mean/std, and bootstrap-support counts. Layered: $K=50$, $L=50$; fixed-$\sigma=0.5$ pattern panel plus layered boundary sweep on the strongest pair \\

Layered mito extension (Figure~\ref{fig:real-bio-dynamics}, Rows C--D) &
Mito substrate lifted to $k \in \{2,3,4\}$ layers &
Patterns = ramp and pulse; displacement sweep $d \in \{0.5,1.0,1.25\}$ &
Fixed-$d$ noise-shape sweep with $c \in \{1,3,5\}$; fixed noise limit $w_{\min}/3$ &
$K=50$; layer spans $k \in \{2,3,4\}$; full-layer graph with capped retained filtration horizon tied to the base-edge count \\

Sensitivity table (Table~\ref{tab:sensitivity}) &
MSTA baseline &
Same $\Delta x \times c$ grid as Figure~\ref{fig:heatmaps}, Row 1 &
CTPL with auto-calibrated tempering &
$K=100$; reports $M$, $\gamma$, $\beta$, A3 status, $R_{\mathrm{pred}}$, $R_{\mathrm{obs}}$ \\

Stochastic ROSA (Tables~\ref{tab:stochastic-summary} and \ref{tab:stochastic}) &
MSTA baseline, fixed bridge-adjacent edit &
Fixed $\Delta x = 2$; adjacent-swap probability $p \in \{0,0.05,\ldots,0.7\}$ &
CTPL with representative $c \in \{0.5,1.5,3.0\}$ &
$K=100$ noise draws; $n_{\mathrm{trials}}=30$ stochastic orderings per draw; reports deterministic and stochastic $R$ \\
\hline
\end{longtable}

\noindent\emph{Hardware note.} All runtime and scaling measurements reported in the manuscript were obtained on a local Apple MacBook Pro with an M4 Pro processor.

}

\section{A Geometric Obstruction to ROSA-C}

\begin{proposition}[Weighted-isomorphism obstruction for ROSA-C]
Let $G$ and $H$ be weighted graphs on the same number of vertices, let $d_{\mathrm{SD}}$ be any graph distance that depends only on the Laplacian spectrum, and let $f$ be an automorphism-equivariant edge-ordering heuristic such as \ac{JCP}. If there exists a vertex permutation matrix $P$ such that
\[
L_H = P^\top L_G P,
\]
then
\[
d_{\mathrm{SD}}(G,H)=0
\qquad\text{and}\qquad
d_f^{\mathrm{ROSA}}(G,H)=0.
\]
In particular, the operative amplification conditions of Proposition \ref{thm:snr-guarantee} do not hold for this pair.
\end{proposition}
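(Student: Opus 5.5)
The argument has three parts. First, the base distance vanishes by similarity invariance. Second, the vertex permutation carries the filtration of $G$ onto that of $H$ step by step. Third, the conclusion about Proposition~\ref{thm:snr-guarantee} follows from the vanishing.

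\textbf{Base distance.} Since $P$ is a permutation matrix, it is orthogonal. Hence $L_H=P^\top L_G P$ is orthogonally similar to $L_G$, and the two Laplacians have identical eigenvalue multisets. Because $d_{\mathrm{SD}}$ depends only on the Laplacian spectrum, $d_{\mathrm{SD}}(G,H)=d_{\mathrm{SD}}(G,G)=0$; for the $W_2$ instantiation the sorted eigenvalues coincide termwise. This covers the $k=0$ term of the ROSA sum.

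\textbf{Equivariance of the filtration.} Let $\sigma$ be the vertex bijection encoded by $P$. The identity $L_H=P^\top L_G P$ says $w^H_{\sigma(i)\sigma(j)}=w^G_{ij}$, up to the convention for the direction of $\sigma$, so $\sigma$ is a weighted isomorphism from $G$ to $H$. I read ``automorphism-equivariant'' as saying that $f(\sigma(e),\sigma(X))=f(e,X)$ for weighted isomorphisms, and that tie-breaking is transported accordingly, so that $\pi^{H}=\sigma\circ\pi^{G}$ edgewise.

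For JCP this can be checked directly. The neighbourhoods satisfy $N_k(\sigma(i))=\sigma(N_k(i))$. The principal submatrices $B_{\sigma(i)\sigma(j)}[U_{\sigma(e)}]$ and $L_H[U_{\sigma(e)}]$ are simultaneous permutation conjugates of the corresponding $G$-blocks. Frobenius inner products and norms are invariant under simultaneous permutation conjugation, so $\rho_{\sigma(e)}(H)=\rho_e(G)$.

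By induction on $k$, $H_k$ is $\sigma(G_k)$: both remove the images of the same first $k$ edges. Hence $L_{H_k}=P^\top L_{G_k}P$ for every $k\le m$, the spectra agree, and $d_{\mathrm{SD}}(G_k,H_k)=0$. Summing over $k=0,\dots,m$ gives $d_f^{\mathrm{ROSA}}(G,H)=0$, consistent with Proposition~\ref{thm:anti-dilution}, since no filtered step provides a witness.

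\textbf{Failure of the operative conditions.} Without noise, the base mean is $\mu_{SD}=0$, which violates (A1). Under a noise process, I would read the claim as applying to the noiseless pair or to a noise model that is itself $\sigma$-equivariant. In that case $(G^{\eta_G},H^{\eta_H})$ is isomorphic in distribution to $(G^{\eta_G},\sigma(G^{\eta_G'}))$, and the stepwise comparison has the same law as the unfiltered one. Then the signed mean shift in (A3) has no positive contribution coming from the edit, so $\gamma=0$. I will state this as a corollary of the noiseless identity rather than prove a distributional statement in full.

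\textbf{Main obstacle.} The hard step is tie-breaking. If several edges share equal scores, an implementation that breaks ties by global edge enumeration need not commute with $\sigma$. I will therefore make equivariant tie-breaking an explicit part of the hypothesis ``automorphism-equivariant.'' I will also note that it is automatic whenever the scores are distinct.
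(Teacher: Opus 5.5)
Your proof is correct and follows the paper's route. Permutation similarity gives isospectrality at $k=0$, equivariance of $f$ gives $L_{H_k}=P^\top L_{G_k}P$ at every step, and summing gives $d_f^{\mathrm{ROSA}}(G,H)=0$. Your explicit JCP invariance check and the equivariant tie-breaking hypothesis spell out what the paper only asserts. Citing (A1) is also the sharper formal reason the operative conditions fail, since with $\mu_{SD}=0$ condition (A3) reduces to $0\ge 0$.

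Drop the noisy aside, though. Even under $\sigma$-equivariant noise, $d((G^{\eta_G})_k,(H^{\eta_H})_k)$ need not have the law of the unfiltered comparison. Independent noise alone can produce a positive signed mean shift, so $\gamma=0$ does not follow. Keep the conclusion for the noiseless pair, as the paper does.
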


\begin{proof}
If $L_H = P^\top L_G P$, then $L_G$ and $L_H$ are permutation-similar and therefore isospectral. Since $d_{\mathrm{SD}}$ depends only on the Laplacian spectrum, it follows that $d_{\mathrm{SD}}(G,H)=0$.

Because $f$ is automorphism-equivariant, the edge scores on $G$ and $H$ are carried into each other by the same permutation, so the filtered graphs $G_k$ and $H_k$ remain permutation-similar at every filtration step $k$. Hence $d_{\mathrm{SD}}(G_k,H_k)=0$ for all $k$, and summing over the trajectory gives $d_f^{\mathrm{ROSA}}(G,H)=\sum_{k=0}^{m} d_{\mathrm{SD}}(G_k,H_k)=0$.
Therefore no positive cumulative mean shift can occur for this pair, and the amplification assumptions fail in this obstruction regime.
\end{proof}

\noindent\textbf{Worked example: Mirror-shifted hub on a square.} Place four leaves at the corners of a square and connect them to a hub by weighted spokes, with weights given by Euclidean edge lengths. Let $G$ be the weighted star obtained by shifting the hub horizontally by $+\delta$, and let $H$ be obtained by shifting it by $-\delta$. The two left spokes in $G$ have the same length, the two right spokes in $G$ have the same length, and the corresponding lengths in $H$ are exchanged by swapping the left and right leaf pairs. Thus $G$ and $H$ are isomorphic as weighted graphs, so the weighted-isomorphism obstruction above gives $d_{\mathrm{SD}}(G,H)=d_f^{\mathrm{ROSA}}(G,H)=0$ despite the nontrivial geometric edit.

\section{Sharp Amplification Boundary on a Weighted Star}

The obstruction above shows that amplification can vanish.
We now show that the \emph{same} graph family also exhibits a sharp boundary between observable trajectory divergence and complete obstruction, controlled entirely by the symmetry structure of the edge weights.
The stronger amplification condition (\ref{a:filtration-amplifies}) is then assessed numerically on this motif via the normalized margin $\gamma := d_f^{\mathrm{ROSA}}/((M{+}1)d_{\mathrm{SD}})-1$, using the worked example and the hub-position scan in Figure \ref{fig:sharp-boundary-heatmap}.

\paragraph{Setup}
Let $S_4$ be the weighted star on five vertices: a hub $v_0=(h_x,h_y)$ connected by spokes to four leaves at the corners of the unit square, $v_1=(0,0)$, $v_2=(1,0)$, $v_3=(1,1)$, $v_4=(0,1)$.
Spoke weights are $w_i = 1/\|v_0 - v_i\|$, so the weighted combinatorial Laplacian is
\[
L(w_1,w_2,w_3,w_4) = \begin{pmatrix}
\textstyle\sum_i w_i & -w_1 & -w_2 & -w_3 & -w_4 \\
-w_1 & w_1 & 0 & 0 & 0 \\
-w_2 & 0 & w_2 & 0 & 0 \\
-w_3 & 0 & 0 & w_3 & 0 \\
-w_4 & 0 & 0 & 0 & w_4
\end{pmatrix}.
\]
The eigenvalue $\lambda=0$ always appears (connected graph); the four nonzero eigenvalues are the roots of the secular equation
\[
1 - \sum_{i=1}^{4} \frac{w_i}{\lambda - w_i} = 0,
\]
equivalently the quartic $q(\lambda) = \sum_{k=0}^{4} a_k \lambda^k$ with coefficients
\[
a_0 = -5\,e_4,\quad a_1 = 4\,e_3,\quad a_2 = -3\,e_2,\quad a_3 = 2\,e_1,\quad a_4 = -1,
\]
where $e_j = e_j(w_1,w_2,w_3,w_4)$ is the $j$-th elementary symmetric polynomial.
These coefficients are verified by symbolic computation (\texttt{Symbolics.jl}; script \texttt{reproduction/theorem\_sharp\_boundary/symbolic\_star.jl}).

\begin{theorem}[Sharp boundary on a weighted star]\label{thm:sharp-boundary}
Let $S_4$ be the five-vertex weighted star above with spectral distance $d_{\mathrm{SD}} = W_2$ and automorphism-equivariant heuristic $f$ (e.g.\ JCP).
\begin{enumerate}[label=\textup{(\roman*)},ref=\roman*,leftmargin=*,itemsep=6pt]

\item\label{case:observable} \textbf{Observable (generic hub position).}
If the hub position $v_0=(h_x,h_y)$ lies on no perpendicular bisector of two square-corner leaves, equivalently
\[
h_x \neq \tfrac12,\qquad
h_y \neq \tfrac12,\qquad
h_x \neq h_y,\qquad
h_x+h_y \neq 1,
\]
then the four spoke weights are pairwise distinct, $f$ induces a strict ordering on the edges of both $G$ and $H$, and for any sufficiently small generic displacement of $v_0$, the filtration trajectories of $G$ and $H$ diverge.
The empirical hub-position scan in Figure \ref{fig:sharp-boundary-heatmap} shows both positive and negative regions under the implemented descending-\ac{JCP} removal order; positive $\gamma$ is therefore a sampled numerical regime, not a generic consequence of asymmetry alone.
The three cases are illustrated in Figure \ref{fig:sharp-boundary-setup}, and the corresponding empirical $\gamma$ landscape over hub positions appears in Figure \ref{fig:sharp-boundary-heatmap}.

\item\label{case:stealth} \textbf{Stealth (axis-aligned mirror edit).}
If $v_0^{(G)} = (\tfrac{1}{2}+\delta,\, \tfrac{1}{2})$ and $v_0^{(H)} = (\tfrac{1}{2}-\delta,\, \tfrac{1}{2})$ for any $\delta > 0$, then $L_H = P^\top L_G P$ where $P$ is the permutation matrix for $\pi=(1\;2)(3\;4)$, and $d_f^{\mathrm{ROSA}}(G,H)=0$ identically.
Condition \textup{(\ref{a:filtration-amplifies})} fails: $\gamma = 0$.

\item\label{case:single-edge} \textbf{First-removed single-edge edit on a tree.}
If $G$ and $H$ differ in exactly one spoke weight, both are trees (no cycles), and the edited spoke is the first edge removed by the shared strict ordering, then the filtration signal is confined to the initial step. More generally, once the edited bridge has been removed from both graphs, the remaining filtered graphs are identical. In the first-removed case this gives $\gamma = d_f^{\mathrm{ROSA}}/((M{+}1)d_{\mathrm{SD}}) - 1 = -M/(M{+}1)$.
Condition \textup{(\ref{a:filtration-amplifies})} fails: $\gamma < 0$.

\end{enumerate}
\end{theorem}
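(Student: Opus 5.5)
I would prove the three cases separately. Each one reduces to a finite symbolic statement about the spoke weights $w_i = 1/\|v_0-v_i\|$ and the JCP ordering, combined with results already established earlier in the paper.

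\textbf{Case (i).} Since $w_i$ is a strictly decreasing function of $\|v_0-v_i\|$, we have $w_i=w_j$ if and only if $v_0$ lies on the perpendicular bisector of $v_i$ and $v_j$. The four excluded lines $h_x=\tfrac12$, $h_y=\tfrac12$, $h_x=h_y$ and $h_x+h_y=1$ are exactly the six bisectors of the square's corner pairs; adjacent pairs give the two axis lines and opposite pairs give the two diagonals. Hence the four weights are pairwise distinct off these lines.

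Next I would invoke the closed form for the star stated before the theorem, $\rho_i=(4w_i+\sum_{j\ne i}w_j)/(2\|L\|_F)$. Because $\rho_i-\rho_j=3(w_i-w_j)/(2\|L\|_F)$, the JCP order is a strict order that coincides with the spoke-weight order. After the first removal the graph is no longer a full star. I would note that the remaining removals are still determined by the same weight comparisons, or else simply appeal to continuity of $\rho$ in the weights so that strict inequalities persist along the filtration.

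For divergence, take $H$ to be $G$ with $v_0$ displaced by a small generic vector $\epsilon u$. Choose $\epsilon$ smaller than half the minimal weight gap; then $H$ has the same strict ordering. Expanding the weight vector of $H$ to first order gives $w^H = w^G + \epsilon\,\nabla w\cdot u + O(\epsilon^2)$, where the gradients $\nabla w_i$ point along the unit vectors from $v_i$ toward $v_0$. For generic $u$ this changes several $w_i$ simultaneously.

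At step $k$ the Laplacian of $G_k$ is that of a star on the remaining spokes. Its nonzero spectrum is given by the roots of the secular equation restricted to those weights, together with isolated zeros. Distinct weight multisets on a star give distinct spectra: $e_1,\dots,e_4$ are recoverable from the quartic coefficients, and the same argument applies for fewer spokes. So $d_{\mathrm{SD}}(G_k,H_k)>0$ for every $k$ at which some remaining weight has changed, and the trajectories diverge. I would state explicitly that the sign of $\gamma$ is not claimed and is only scanned numerically, as the theorem says.

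\textbf{Case (ii).} Here $\|v_0^{(G)}-v_1\| = \|v_0^{(H)}-v_2\|$ and so on under $\pi=(1\,2)(3\,4)$, so $L_H=P^\top L_GP$. I would then apply the Weighted-isomorphism obstruction proposition directly to get $d_{\mathrm{SD}}(G,H)=0$ and $d_f^{\mathrm{ROSA}}(G,H)=0$.

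One caveat needs attention: $G$ itself has tied weights ($w_1=w_4$ and $w_2=w_3$), so the tie-breaking must be equivariant. I would handle this by observing that, whichever tie-break is used, the ordered removal in $H$ removes spokes carrying the same multiset of weights step by step. This is because the tied pairs have equal weights, so $G_k$ and $H_k$ are weighted-isomorphic stars regardless. Since $\mu_{SD}=0$ here, I interpret $\gamma=0$ through the numerator, i.e.\ the cumulative shift in (A3) is zero. This degenerate $0/0$ convention is the step I expect to need the most care in wording.

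\textbf{Case (iii).} Both graphs are trees with a shared strict ordering whose first element is the edited spoke $e$. After one removal, $G_1$ and $H_1$ are the same weighted graph, so $d(G_k,H_k)=0$ for $k\ge1$. The ROSA sum is therefore $d_{\mathrm{SD}}(G,H)$, and
\[
\gamma=\frac{d_{\mathrm{SD}}}{(M+1)d_{\mathrm{SD}}}-1=-\frac{M}{M+1}<0.
\]
The same argument gives the general remark: once $e$ is removed from both graphs, all later filtered states coincide.

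The main obstacle is in case (i): verifying that the strict ordering, and hence lockstep removal of the same spokes, survives every filtration step and the small displacement. I would handle this by applying the weight-order characterization of JCP on each sub-star, and by keeping $\epsilon$ below the minimal weight gap.
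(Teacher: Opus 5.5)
Your proposal is correct. It uses the same three-case decomposition as the paper: the bisector characterization plus the closed-form star JCP score in (i), the weighted-isomorphism obstruction in (ii), and coincidence of the post-removal states in (iii). At a few steps, though, your argument differs from the paper's and is tighter.

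In (i), the paper obtains $W_2(G_k,H_k)>0$ from the remark that the secular roots move continuously with the weights, which by itself does not show the spectra differ. Your argument does prove it. The nonzero part of the characteristic polynomial of an $r$-spoke star has coefficients $\pm(j+1)e_j$, so the Laplacian spectrum determines the spoke-weight multiset. Combined with the preserved strict order, any changed remaining weight then changes the multiset.

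In (ii), the paper's appeal to automorphism-equivariance glosses over the fact that $G$ itself has ties ($w_1=w_4$, $w_2=w_3$), which are broken by implementation-defined enumeration. You close that hole by noting that removing the $k$ largest weights leaves a tie-break-independent multiset, so $G_k$ and $H_k$ are isomorphic weighted stars at every step. Your caveat that $\gamma$ is $0/0$ there is also fair. Strictly, with $\mu_{SD}=0$ the inequality in (A3) reads $0\ge 0$ and holds; it is the non-degeneracy condition (A1) that fails.

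In (iii), you do not need the bridge property the paper invokes. Since $G$ and $H$ differ only in the weight of the first-removed edge, $G_1=H_1$ outright, and the shared ordering does the rest. You should add that $d_{\mathrm{SD}}(G,H)>0$, for instance because the Laplacian trace changes, so that $\gamma$ is defined.

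Finally, what you call the main obstacle is not one. The ordering is precomputed once on the unfiltered graph (Algorithm~\ref{alg:rosa-c}), so only the full-star JCP formula is needed and no JCP analysis of sub-stars arises.
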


\begin{proof}
\textbf{Case~(\ref{case:observable}).}
The equality of two spoke lengths occurs exactly when $v_0$ lies on the perpendicular bisector of the corresponding two leaves. For the four unit-square leaves, these bisectors are
\[
h_x=\tfrac12,\qquad h_y=\tfrac12,\qquad h_x=h_y,\qquad h_x+h_y=1.
\]
Therefore, if $v_0$ avoids these four lines, the squared distances
\[
d_i^2 = (h_x - x_i)^2 + (h_y - y_i)^2
\]
are pairwise distinct, and hence so are the weights $w_i = 1/\sqrt{d_i^2}$. By the explicit weighted-star formula for the \ac{JCP} scores given above, the strict spoke-weight ordering induces a strict \ac{JCP} ordering as well.
A sufficiently small generic perturbation $H$ of the hub position changes all four weights while preserving that strict ordering, so the filtration trajectories $(G_k)$ and $(H_k)$ are generated by the same edge-removal sequence.
Since the secular equation roots move continuously with the weights, each nonzero eigenvalue of $G_k$ differs from that of $H_k$, giving $W_2(G_k,H_k)>0$ for steps $k < M$ where at least one edited spoke remains.
Because the hub edit changes all four weights, multiple filtration steps contribute positive $W_2$, which proves qualitative trajectory divergence.
The stronger amplification inequality $\sum_{k=1}^{M} W_2(G_k,H_k) > M\,W_2(G,H)$ is evaluated numerically rather than derived symbolically here: the displayed worked instance (hub at $(0.3,0.4)$ vs $(0.35,0.45)$) gives $\gamma \approx -0.27$ under the implemented descending-\ac{JCP} removal order, while the full hub-position scan in Figure \ref{fig:sharp-boundary-heatmap} shows sampled positive regions outside the spoke-length equality loci.
The corresponding trajectory is recorded in \texttt{reproduction/theorem\_sharp\_boundary/results.md}.

\textbf{Case~(\ref{case:stealth}).}
For the mirror placement, the squared distances satisfy $d_1^2(G) = d_2^2(H)$, $d_2^2(G) = d_1^2(H)$, $d_3^2(G) = d_4^2(H)$, $d_4^2(G) = d_3^2(H)$ identically in $\delta$ (verified symbolically: all four differences expand to exact zero).
Therefore $\{w_1(G),\ldots,w_4(G)\} = \{w_{\pi(1)}(H),\ldots,w_{\pi(4)}(H)\}$ with $\pi = (1\;2)(3\;4)$, and the weighted-isomorphism obstruction (Proposition above) applies at every filtration step.

\textbf{Case~(\ref{case:single-edge}).}
On a star (or any tree), each spoke is a bridge: its removal disconnects its leaf.
After removing spoke $i$ from both $G_{k^*}$ and $H_{k^*}$, vertex $v_i$ becomes isolated in both graphs with the same zero Laplacian block.
The remaining connected component is a star on $\{v_0\} \cup \{v_j : j \ne i\}$ with identical weights in $G$ and $H$ (since only spoke $i$ was edited).
Hence $L_{G_k} = L_{H_k}$ for all $k > k^*$, giving $W_2(G_k,H_k) = 0$.
Only steps $k \le k^*$ contribute nonzero $W_2$, and when $i$ is the first-removed, highest-\ac{JCP}-score spoke ($k^*=0$), only step~$0$ is nonzero: $d_f^{\mathrm{ROSA}} = W_2(G_0,H_0) = d_{\mathrm{SD}}(G,H)$, so $\gamma = d_f^{\mathrm{ROSA}}/((M{+}1)d_{\mathrm{SD}})-1 = -M/(M{+}1)$.
\end{proof}

\begin{remark}[Signal distribution and condition (\ref{a:filtration-amplifies})]
The sharp boundary identifies the structural mechanism underlying empirical satisfaction of (\ref{a:filtration-amplifies}): \emph{the edit must distribute spectral signal across multiple filtration steps}.
This holds naturally for vertex displacements in geometric graphs (which change all incident edge weights) and for global noise processes.
It can fail for first-removed single-edge edits on trees, where the signal is confined to one step and diluted by $1/(M{+}1)$ averaging.
The boundary between cases~(\ref{case:observable}) and~(\ref{case:stealth}) is controlled by the automorphism group $\mathrm{Aut}(G,w)$: positions off the spoke-length equality loci break the weighted-isomorphism obstruction used in the strict-ordering argument, while the displayed axis-aligned mirror placements preserve a reflection symmetry that kills it.
On this weighted-star motif, the empirical $\gamma$ landscape indicates that some sampled generic positions support amplification, while others dilute the base signal under the same removal convention.
\end{remark}

\begin{figure}[ht]
\centering
\includegraphics[width=\textwidth]{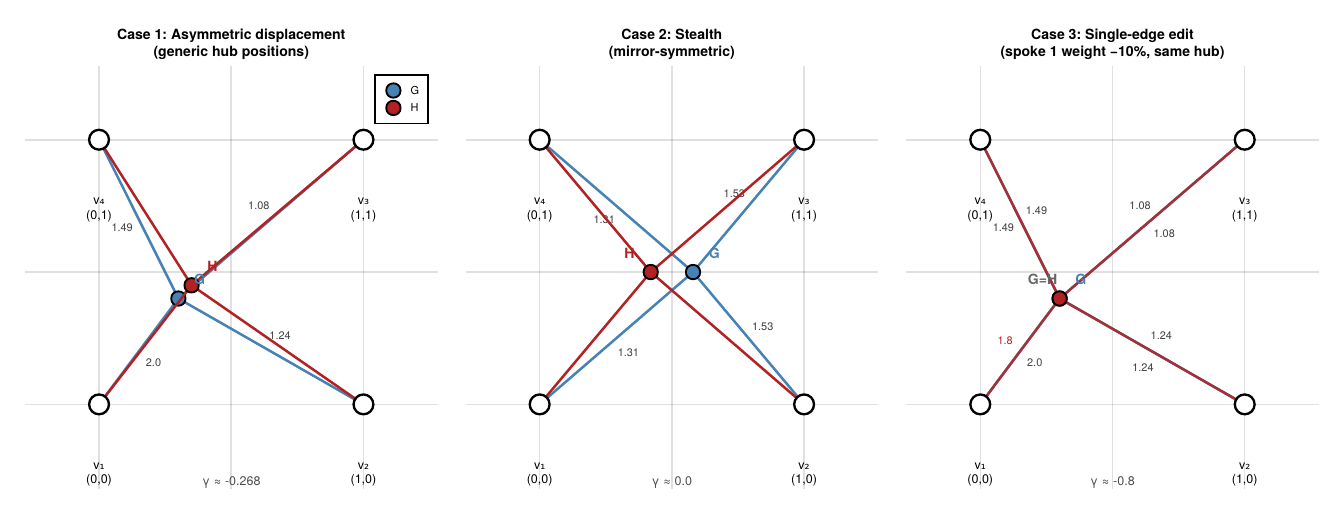}
\caption{%
  Three cases of the Sharp Boundary Theorem on a five-vertex weighted star.
  Leaf nodes $v_1$--$v_4$ are fixed at unit-square corners; hub $v_0$ lies at the marked position.
  Blue: graph $G$; red: graph $H$ (perturbed).
  Spoke weights (rounded) are annotated at edge midpoints; in Case~3 the changed weight is highlighted in red.
  \emph{Case~1 (Asymmetric displacement):} generic hub positions, all four weights shift and the filtration trajectories diverge; the displayed worked example has $\gamma \approx -0.27$ under the implemented descending-\ac{JCP} order.
  \emph{Case~2 (Stealth):} mirror-symmetric placement, weighted isomorphism at every filtration step, $\gamma = 0$.
  \emph{Case~3 (Single-edge edit):} same hub, one first-removed spoke weight reduced by $10\%$, signal confined to one filtration step, $\gamma \approx -0.8 = -M/(M{+}1)$.%
}
\label{fig:sharp-boundary-setup}
\end{figure}

\begin{figure}[ht]
\centering
\includegraphics[width=0.62\textwidth]{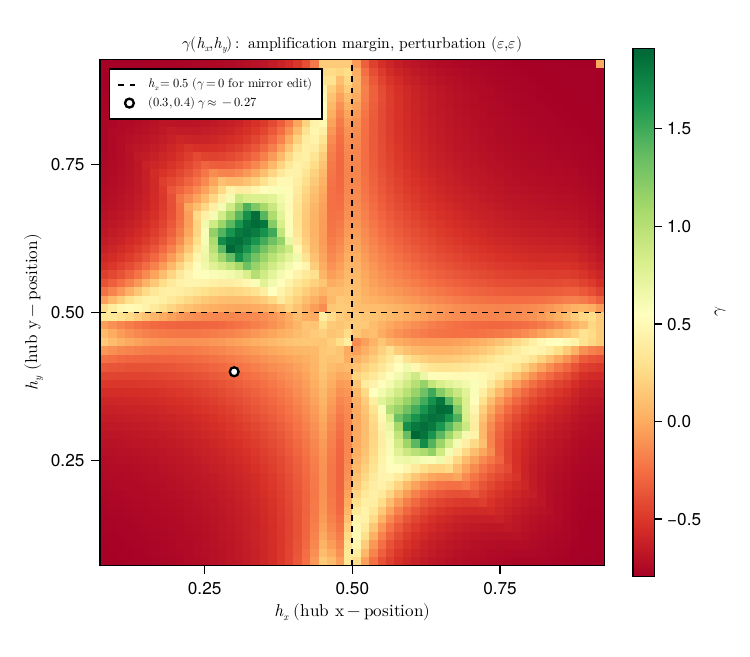}
\caption{%
  Amplification parameter $\gamma(h_x, h_y)$ over hub position for the five-vertex weighted star
  (perturbation $H$: hub shifted by $(\varepsilon,\varepsilon)$ with $\varepsilon=0.05$).
  Green: $\gamma > 0$ (ROSA amplifies); pale yellow: $\gamma \approx 0$; red: $\gamma < 0$ (dilution).
  Dashed lines mark the displayed mirror-obstruction axes $h_x = 0.5$ and $h_y = 0.5$.
  Additional spoke-length equality loci $h_x = h_y$ and $h_x + h_y = 1$ also mark non-generic positions where the strict-ordering argument does not apply.
  The white circle marks the worked example $(0.3,\,0.4)$ discussed in the proof.%
}
\label{fig:sharp-boundary-heatmap}
\end{figure}

For a weighted complete graph with similarity weights $w_{ij}$, we first convert similarities to distances by $d_{ij}=1-w_{ij}$.
The symmetric $k$NN graph retains an undirected edge $(i,j)$ whenever $j$ lies among the $k$ nearest neighbors of $i$ or $i$ lies among the $k$ nearest neighbors of $j$ under $d_{ij}$.
We then compute a minimum spanning tree on the same distance graph and take the union of its edges with the symmetric $k$NN edges.
The resulting $k$NN$\cup$MST graph is therefore connected by construction while restoring local edge neighborhoods absent from the raw complete connectome representation.

\section{Dense-Connectome Comparison Heuristics}

For the dense-connectome boundary probe in Section \ref{sec-results-fmri-boundary}, we compare \ac{JCP} against three simple edge-ordering heuristics on a weighted graph $G=(V,E,w)$.
Let $d_i := \sum_{j:(i,j)\in E} w_{ij}$ denote the weighted degree of vertex $i$.
The comparison orderings are:
\[
\pi^{R}: \quad s_R(i,j) = R_{\mathrm{eff}}(i,j),
\]
\[
\pi^{D}: \quad s_D(i,j) = w_{ij}\sqrt{\frac{\max(d_i,d_j)}{\min(d_i,d_j)}},
\]
\[
\pi^{W}: \quad s_W(i,j) = w_{ij},
\]
where $R_{\mathrm{eff}}(i,j)$ is the effective resistance between the edge endpoints under the combinatorial Laplacian of $G$.
In all three cases, edges are ordered by decreasing score.

\begin{algorithm2e*}
\caption{Edge ordering by a comparison heuristic}
\KwIn{Weighted graph $G=(V,E,w)$, edge score function $s:E\to\mathbb{R}$}
\KwOut{Edge ordering $\pi = (e^{(1)},\dots,e^{(|E|)})$}
\ForEach{$e=(i,j)\in E$}{
    compute score $s(e)$\;
}
$\pi \leftarrow \text{sort}(E \text{ by decreasing } s(e))$\;
\Return{$\pi$}\;
\end{algorithm2e*}

For the dense-connectome comparison, $s$ is taken to be $s_R$, $s_D$, or $s_W$ above, while \ac{JCP} uses the score defined in Section \ref{sec-jacobian}.

\section{Reproducibility}

These source repositories are collected on the \href{https://github.com/systems-mechanobiology}{systems-mechanobiology GitHub organization page}. The software dependencies are resolved directly by the frozen reproduction environment rather than through Julia's General registry.

\begin{itemize}
\item \href{https://github.com/systems-mechanobiology/ROSA}{ROSA}: manuscript source, signed-off figures, appendix assets, and the `reproduction/` tree that drives the paper-facing experiments. Each figure or table is created by a single documented script, in a frozen Julia environment to maximize exact reproducibility.
\item \href{https://github.com/systems-mechanobiology/ROSA.jl}{ROSA.jl}: core ROSA algorithm, including ROSA-C, stochastic ROSA, the JCP heuristic, and the graph-distance routines used by the reproduction scripts.
\item \href{https://github.com/systems-mechanobiology/DynamicGeometricGraphs.jl}{DynamicGeometricGraphs.jl}: graph construction and graph-geometry utilities for the synthetic and biological substrates.
\item \href{https://github.com/systems-mechanobiology/CalibratedTemperedPowerLaw.jl}{CalibratedTemperedPowerLaw.jl}: calibrated tempered power-law noise model used in the perturbation experiments.
\item \href{https://github.com/systems-mechanobiology/Spectral.jl}{Spectral.jl}: sparse partial-spectrum backend used for the top-fraction benchmark.
\item Additional support packages declared in `reproduction/Project.toml` include \texttt{GraphSpectra.jl} and \texttt{SkeletonExtractor.jl}; these support specific benchmark and data-preparation paths but are not the main manuscript/reproduction repositories listed above.
\end{itemize}

This modularization avoids tying \ac{ROSA} to a single graph implementation, or buried spectral implementation.
All repositories have test coverage and documentation.

\section{Algorithms}

The ROSA algorithms are presented here for reference.
See Section \ref{sec-filtration} for ROSA-C theoretical foundations, Section \ref{sec-stochastic-rosa} for the stochastic variant, and Section \ref{sec-complexity} for computational scaling.

\begin{algorithm2e*}
\caption{ROSA-C trajectory computation (with optional stride)}
\label{alg:rosa-c}
\KwIn{Graphs $G = (V_G, E_G)$, $H = (V_H, E_H)$, edge ranking function $f : (e, X) \rightarrow \mathbb{R}_{\geq 0}$, base distance $d(\cdot,\cdot)$, integer retained-edge delimiter $0 \le q \le \min(|E_G|,|E_H|)$, stride $s \geq 1$}
\KwOut{Trajectory $\mathcal{T}$ sampled every $s$ steps and at the terminal state, where $m=\min(|E_G|,|E_H|)-q$}
$\mathcal{T} \leftarrow []$\;
$(e_H^{(1)},\dots,e_H^{(|E_H|)}) \leftarrow \text{sort}(E_H \text{ by decreasing } e \mapsto f(e, H))$\;
$(e_G^{(1)},\dots,e_G^{(|E_G|)}) \leftarrow \text{sort}(E_G \text{ by decreasing } e \mapsto f(e, G))$\;
\For{$k \leftarrow 0$ \KwTo $m$}{
    $G_k \leftarrow (V_G, E_G \setminus \{e_G^{(1)}, \dots, e_G^{(k)}\})$\;
    $H_k \leftarrow (V_H, E_H \setminus \{e_H^{(1)}, \dots, e_H^{(k)}\})$\;
    \If{$k \bmod s = 0 \;\lor\; k=m$}{
        append $d(G_k, H_k)$ to $\mathcal{T}$\;
    }
}
\Return{$\mathcal{T}$}\;
\end{algorithm2e*}

Algorithm \ref{alg:rosa-c} returns the filtered distance trajectory; the scalar ROSA-C value is the sum of this trajectory (or of all filtration steps in the unstrided case).

\begin{algorithm2e*}
\caption{Stochastic ROSA-C: randomized scalar aggregation}
\label{alg:rosa-stochastic}
\KwIn{Graphs $G$, $H$, edge ranking function $f$, similarity $s$, swap probability $p_{\mathrm{swap}} \in [0,1]$, number of ordering randomizations $B$}
\KwOut{Monte Carlo summaries of scalar ROSA-C sums and violation rates}
$\text{ordering}^{\mathrm{base}}_G \leftarrow \text{sort}(f(e, G) \text{ for } e \in E_G)$\;
$\text{ordering}^{\mathrm{base}}_H \leftarrow \text{sort}(f(e, H) \text{ for } e \in E_H)$\;
$\text{sums},\text{vrs},\text{traces} \leftarrow \text{empty lists of length } B$\;
\For{$b \leftarrow 1$ \KwTo $B$}{
    $\text{ordering}^{(b)}_G \leftarrow \text{PairwiseAdjacentSwap}(\text{ordering}^{\mathrm{base}}_G, p_{\mathrm{swap}})$\;
    $\text{ordering}^{(b)}_H \leftarrow \text{PairwiseAdjacentSwap}(\text{ordering}^{\mathrm{base}}_H, p_{\mathrm{swap}})$\;
    $(\text{sums}[b],\text{vrs}[b],\text{traces}[b]) \leftarrow \text{TrajectoryROSA}(G, H, \text{ordering}^{(b)}_G, \text{ordering}^{(b)}_H, s)$\;
}
\Return{$\operatorname{mean}(\text{sums})$, $\operatorname{std}(\text{sums})$, $\operatorname{quantile}_{0.025,0.975}(\text{sums})$, and analogous summaries for $\text{vrs}$}\;
\end{algorithm2e*}

\begin{algorithm2e*}
\caption{Parallel ROSA-C: Pairwise Edge-Mask Batching}
\label{alg:parallel-rosa}
\KwIn{Graphs $G,H$, orderings $\pi^G,\pi^H$, base distance $d$, number of batches $K$}
\KwOut{ROSA-C trajectory $(d(G_k,H_k))_{k=0}^{m}$}
\tcp{Step 1: Precompute pairwise edge masks}
$M_G \leftarrow \text{BuildEdgeMask}(G, \pi^G)$\;
$M_H \leftarrow \text{BuildEdgeMask}(H, \pi^H)$\;
$m \leftarrow \min(|\pi^G|, |\pi^H|)$\;
$\text{trajectory}[0] \leftarrow d(M_G.A_{\mathrm{init}}, M_H.A_{\mathrm{init}})$\;
$\text{batches} \leftarrow \text{Partition}(\{1,\dots,m\}, K)$\;
\BlankLine
\tcp{Step 2: Parallel execution across contiguous step blocks}
\textbf{parallel for} $\mathcal{B} \in \text{batches}$ \textbf{do} \{
    $a \leftarrow \min(\mathcal{B})$\;
    $A_G \leftarrow \text{ReconstructAdjacency}(M_G, a-1)$\;
    $A_H \leftarrow \text{ReconstructAdjacency}(M_H, a-1)$\;
    \For{$k \in \mathcal{B}$}{
        $\text{RemoveEdge}(A_G, M_G.\text{removals}[k])$\;
        $\text{RemoveEdge}(A_H, M_H.\text{removals}[k])$\;
        $\text{partial}[k] \leftarrow d(A_G, A_H)$\;
    }
\}
\Return{$(\text{trajectory}[0], \text{partial}[1], \dots, \text{partial}[m])$}\;
\end{algorithm2e*}

\section{Detailed Results Tables}\label{sec-detailed-tables}

{\footnotesize
\begin{longtable}{rr r rrr rr}
\caption{Empirical validation of the IS2 amplification framework assumptions (Proposition~\ref{thm:snr-guarantee}) across the $\Delta x \times c$ parameter grid ($K=100$, MSTA $N=30$). $M$: removal depth. $\gamma$: amplification factor. $\beta$: variance constant normalized by $\sqrt{M+1}$. A3: signed mean-shift condition holds ($\mu_{\mathrm{ROSA}} > (M+1)\mu_{\mathrm{SD}}$). $R_{\mathrm{pred}} = (1+\gamma)\sqrt{M+1}/\beta$. $R_{\mathrm{obs}}$: observed amplification ratio.}
\label{tab:sensitivity} \\
\hline
$\Delta x$ & $c$ & $M$ & $\gamma$ & $\beta$ & A3 & $R_{\mathrm{pred}}$ & $R_{\mathrm{obs}}$ \\
\hline
\endfirsthead
\hline
$\Delta x$ & $c$ & $M$ & $\gamma$ & $\beta$ & A3 & $R_{\mathrm{pred}}$ & $R_{\mathrm{obs}}$ \\
\hline
\endhead
0.5 & 0.5 & 32 & 0.509 & 3.80 & \checkmark & 2.28 & 2.28 \\
0.5 & 1.0 & 32 & 0.496 & 3.93 & \checkmark & 2.19 & 2.19 \\
0.5 & 1.5 & 32 & 0.511 & 4.12 & \checkmark & 2.11 & 2.11 \\
0.5 & 2.0 & 32 & 0.454 & 4.01 & \checkmark & 2.08 & 2.08 \\
0.5 & 3.0 & 32 & 0.331 & 3.67 & \checkmark & 2.08 & 2.08 \\
1.0 & 0.5 & 32 & 0.502 & 3.86 & \checkmark & 2.24 & 2.24 \\
1.0 & 1.0 & 32 & 0.492 & 3.84 & \checkmark & 2.23 & 2.23 \\
1.0 & 1.5 & 32 & 0.507 & 4.11 & \checkmark & 2.11 & 2.11 \\
1.0 & 2.0 & 32 & 0.453 & 4.09 & \checkmark & 2.04 & 2.04 \\
1.0 & 3.0 & 32 & 0.330 & 3.73 & \checkmark & 2.05 & 2.05 \\
2.0 & 0.5 & 32 & 0.492 & 3.99 & \checkmark & 2.15 & 2.15 \\
2.0 & 1.0 & 32 & 0.480 & 3.85 & \checkmark & 2.21 & 2.21 \\
2.0 & 1.5 & 32 & 0.493 & 4.04 & \checkmark & 2.12 & 2.12 \\
2.0 & 2.0 & 32 & 0.445 & 4.17 & \checkmark & 1.99 & 1.99 \\
2.0 & 3.0 & 32 & 0.325 & 3.79 & \checkmark & 2.01 & 2.01 \\
5.0 & 0.5 & 32 & 0.414 & 4.17 & \checkmark & 1.95 & 1.95 \\
5.0 & 1.0 & 32 & 0.418 & 3.77 & \checkmark & 2.16 & 2.16 \\
5.0 & 1.5 & 32 & 0.410 & 4.18 & \checkmark & 1.94 & 1.94 \\
5.0 & 2.0 & 32 & 0.384 & 4.32 & \checkmark & 1.84 & 1.84 \\
5.0 & 3.0 & 32 & 0.273 & 3.98 & \checkmark & 1.84 & 1.84 \\
10.0 & 0.5 & 32 & 0.085 & 3.71 & \checkmark & 1.68 & 1.68 \\
10.0 & 1.0 & 32 & 0.124 & 4.02 & \checkmark & 1.60 & 1.60 \\
10.0 & 1.5 & 32 & 0.121 & 3.64 & \checkmark & 1.77 & 1.77 \\
10.0 & 2.0 & 32 & 0.116 & 4.04 & \checkmark & 1.59 & 1.59 \\
10.0 & 3.0 & 32 & 0.065 & 3.51 & \checkmark & 1.74 & 1.74 \\
25.0 & 0.5 & 32 & -0.586 & 2.73 & --- & 0.87 & 0.87 \\
25.0 & 1.0 & 32 & -0.578 & 2.53 & --- & 0.96 & 0.96 \\
25.0 & 1.5 & 32 & -0.578 & 2.08 & --- & 1.17 & 1.17 \\
25.0 & 2.0 & 32 & -0.564 & 2.27 & --- & 1.10 & 1.10 \\
25.0 & 3.0 & 32 & -0.565 & 2.43 & --- & 1.03 & 1.03 \\
\hline
\end{longtable}

}

{\footnotesize
\begin{longtable}{rr rrr rr}
\caption{Stochastic ROSA: effect of swap probability $p$ on amplification ratio $R$ across noise regimes ($\Delta x = 2$, $K=100$, $n_{\mathrm{trials}}=30$). $R_{\mathrm{det}}$: deterministic ROSA. $R_{\mathrm{stoch}}$: stochastic ROSA. $\Delta R$: relative improvement. Ordering width: mean 95\% interval width over stochastic ordering perturbations. $\sigma_{\mathrm{red}}$: variance reduction relative to deterministic.}
\label{tab:stochastic} \\
\hline
$c$ & $p$ & $R_{\mathrm{det}}$ & $R_{\mathrm{stoch}}$ & $\Delta R$ (\%) & Ordering width & $\sigma_{\mathrm{red}}$ (\%) \\
\hline
\endfirsthead
\hline
$c$ & $p$ & $R_{\mathrm{det}}$ & $R_{\mathrm{stoch}}$ & $\Delta R$ (\%) & Ordering width & $\sigma_{\mathrm{red}}$ (\%) \\
\hline
\endhead
0.5 & 0.00 & 2.12 & 2.12 & +0.0 & 0.0000 & +0.0 \\
0.5 & 0.05 & 2.12 & 2.14 & +0.9 & 3.4957 & +0.8 \\
0.5 & 0.10 & 2.12 & 2.16 & +2.0 & 4.6033 & +1.7 \\
0.5 & 0.20 & 2.12 & 2.19 & +3.4 & 5.9047 & +2.9 \\
0.5 & 0.30 & 2.12 & 2.21 & +4.2 & 6.6341 & +3.4 \\
0.5 & 0.50 & 2.12 & 2.23 & +5.4 & 7.1080 & +4.3 \\
0.5 & 0.70 & 2.12 & 2.24 & +5.5 & 6.5504 & +4.0 \\
\hline
1.5 & 0.00 & 1.95 & 1.95 & +0.0 & 0.0000 & +0.0 \\
1.5 & 0.05 & 1.95 & 1.96 & +0.5 & 3.5011 & +0.4 \\
1.5 & 0.10 & 1.95 & 1.96 & +0.5 & 4.5256 & +0.3 \\
1.5 & 0.20 & 1.95 & 1.98 & +1.6 & 5.8693 & +1.1 \\
1.5 & 0.30 & 1.95 & 1.98 & +1.6 & 6.3840 & +0.9 \\
1.5 & 0.50 & 1.95 & 1.99 & +2.1 & 6.9278 & +0.9 \\
1.5 & 0.70 & 1.95 & 2.00 & +2.4 & 6.5624 & +0.7 \\
\hline
3.0 & 0.00 & 2.11 & 2.11 & +0.0 & 0.0000 & +0.0 \\
3.0 & 0.05 & 2.11 & 2.14 & +1.3 & 3.2463 & +1.2 \\
3.0 & 0.10 & 2.11 & 2.16 & +2.4 & 4.2214 & +2.2 \\
3.0 & 0.20 & 2.11 & 2.19 & +4.1 & 5.5390 & +3.6 \\
3.0 & 0.30 & 2.11 & 2.24 & +6.2 & 6.0517 & +5.3 \\
3.0 & 0.50 & 2.11 & 2.30 & +9.3 & 6.4409 & +7.8 \\
3.0 & 0.70 & 2.11 & 2.35 & +11.6 & 6.0452 & +9.4 \\
\hline
\end{longtable}

}

{\footnotesize
\begin{table}[htbp]
\centering
\caption{Worked JCP examples illustrating a sparse/localized positive case and a dense/uniform boundary case.}
\label{tab:jcp-worked-example}
\begin{tabular}{lllr}
\hline
Graph & Edge class & JCP score & Rank pattern \\
\hline
$K_5$ complete & any edge & 0.5000 & all equal \\
N=5 bow-tie & hub-leaf & 0.6030 & above triangle-closing \\
N=5 bow-tie & triangle-closing & 0.5477 & below hub edges \\
N=6 barbell & bridge & 0.5774 & below triangle edges \\
N=6 barbell & triangle-closing & 0.6255 & above bridge \\
\hline
\end{tabular}
\\[0.5em]
\begin{tabular}{rrrr}
\hline
$\sigma$ & $P(\mathrm{top\ is\ hub})$ & $P(\mathrm{all\ hub}>\mathrm{leaf})$ & mean gap \\
\hline
0.02 & 1.000000 & 1.000000 & 0.055319 \\
0.05 & 0.999750 & 0.978150 & 0.055085 \\
0.10 & 0.973850 & 0.685700 & 0.054713 \\
0.20 & 0.865700 & 0.368650 & 0.053465 \\
0.40 & 0.757150 & 0.229300 & 0.048651 \\
\hline
\end{tabular}
\end{table}

}

\begin{figure}[htbp]
\centering
\includegraphics[width=0.72\textwidth]{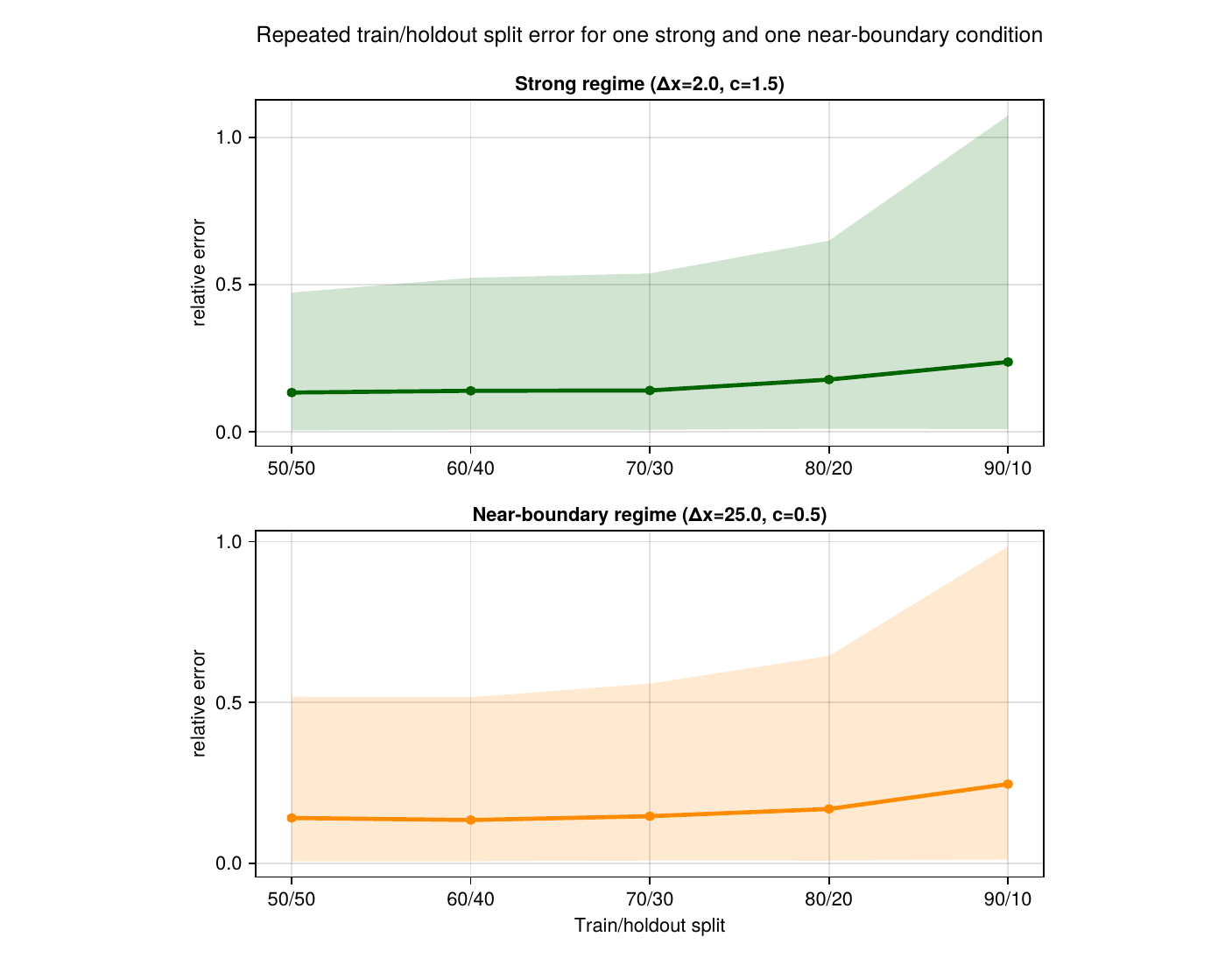}
\caption{Repeated measure/holdout split error for the IS2 consistency check in Section \ref{sec-results-sensitivity}. The strong regime ($\Delta x=2.0$, $c=1.5$) and near-boundary regime ($\Delta x=25.0$, $c=0.5$) show how the relative prediction error varies across train/holdout splits from 50/50 to 90/10. Shaded bands denote bootstrap 95\% intervals over repeated random splits.}
\label{fig:is2-holdout}
\end{figure}

\begin{figure}[htbp]
\centering
\includegraphics[width=0.65\textwidth]{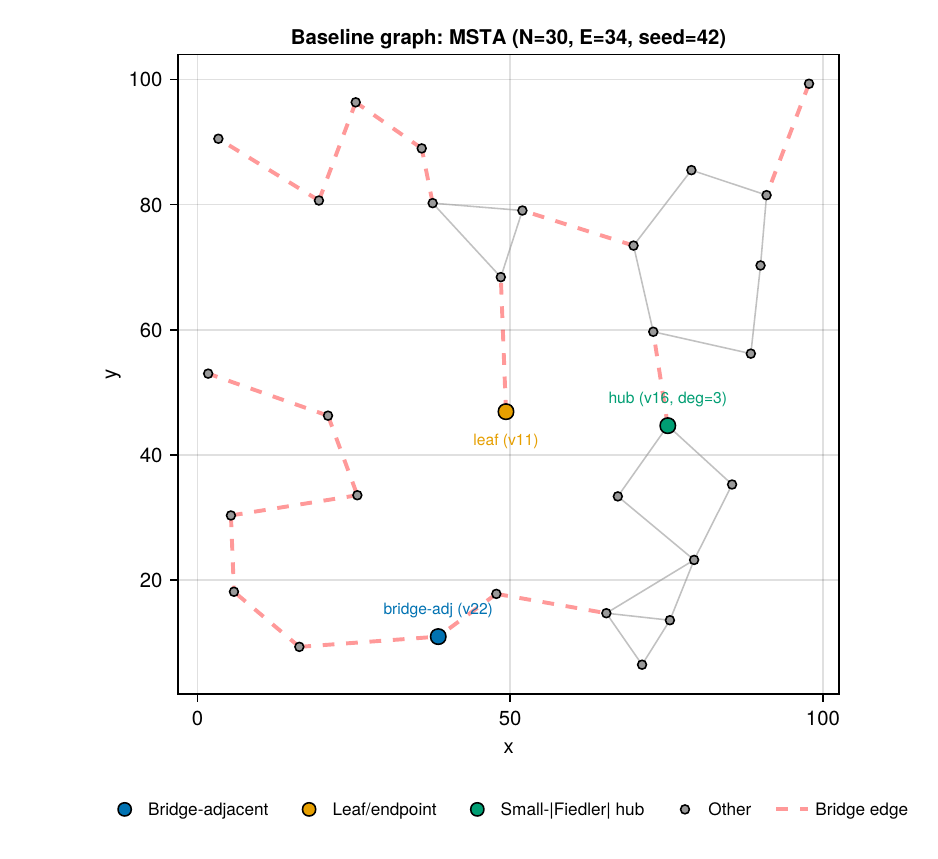}
\caption{The MSTA baseline graph (MST augmented with 5 shortest non-tree edges; $N=30$, $E=34$, $\mathrm{seed}=42$, $d_{\min}=8.0$) used in all synthetic experiments (Sections \ref{sec-results-heatmaps}--\ref{sec-results-stochastic}). Vertex edit sites are highlighted by selection rule: bridge-adjacent/Fiedler-gradient proxy (blue), leaf/endpoint (orange), and small-Fiedler-magnitude hub (green). Dashed red edges are bridges (edges whose removal disconnects the graph). These three sites are selected algorithmically; for real biological graphs (Section \ref{sec-results-real-geometry}), edit vertices are chosen manually based on biological plausibility.}
\label{fig:mst-sites}
\end{figure}

\begin{figure}[htbp]
\centering
\begin{minipage}{0.48\textwidth}
\centering
\includegraphics[width=\textwidth]{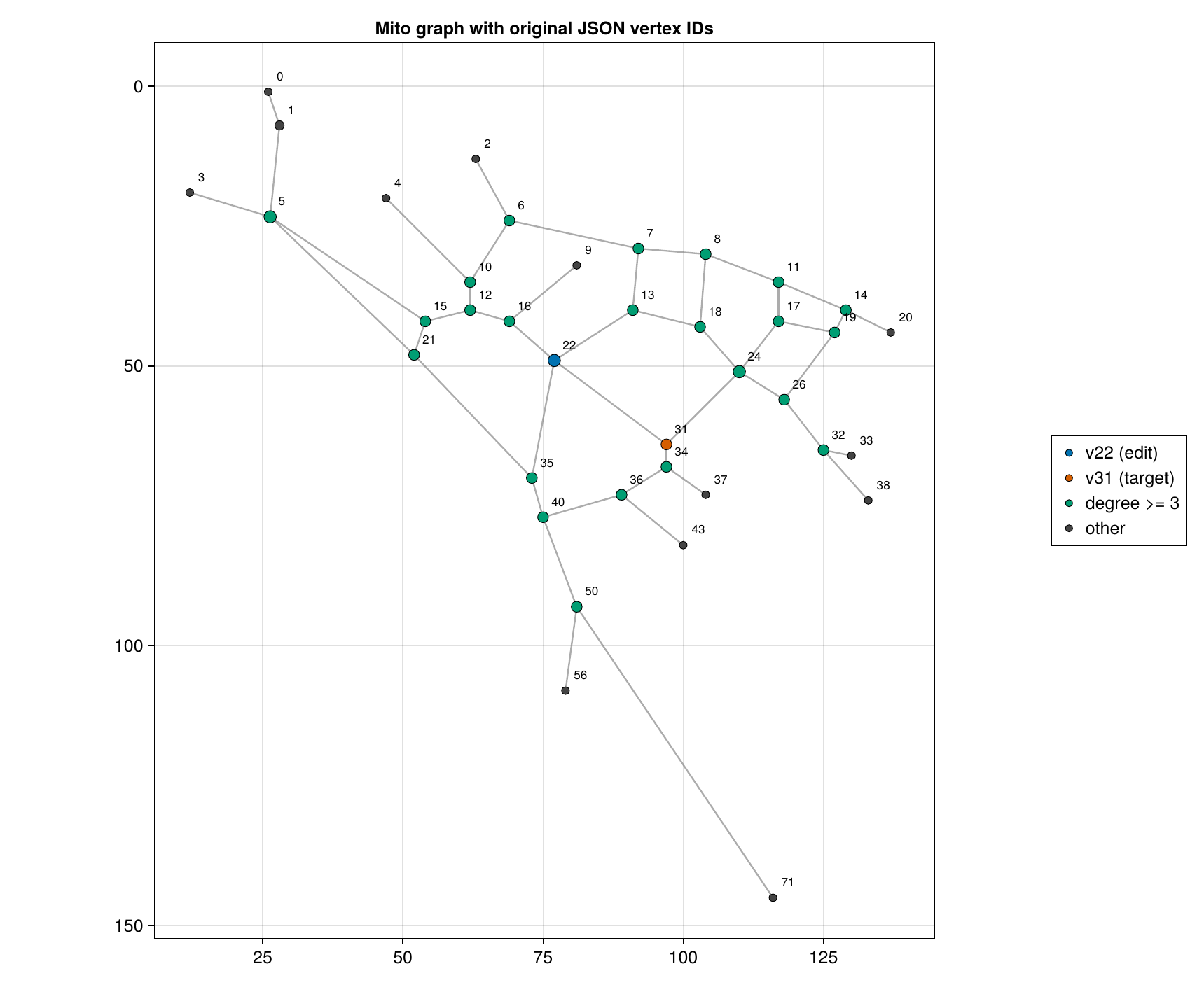}
\end{minipage}\hfill
\begin{minipage}{0.48\textwidth}
\centering
\includegraphics[width=\textwidth]{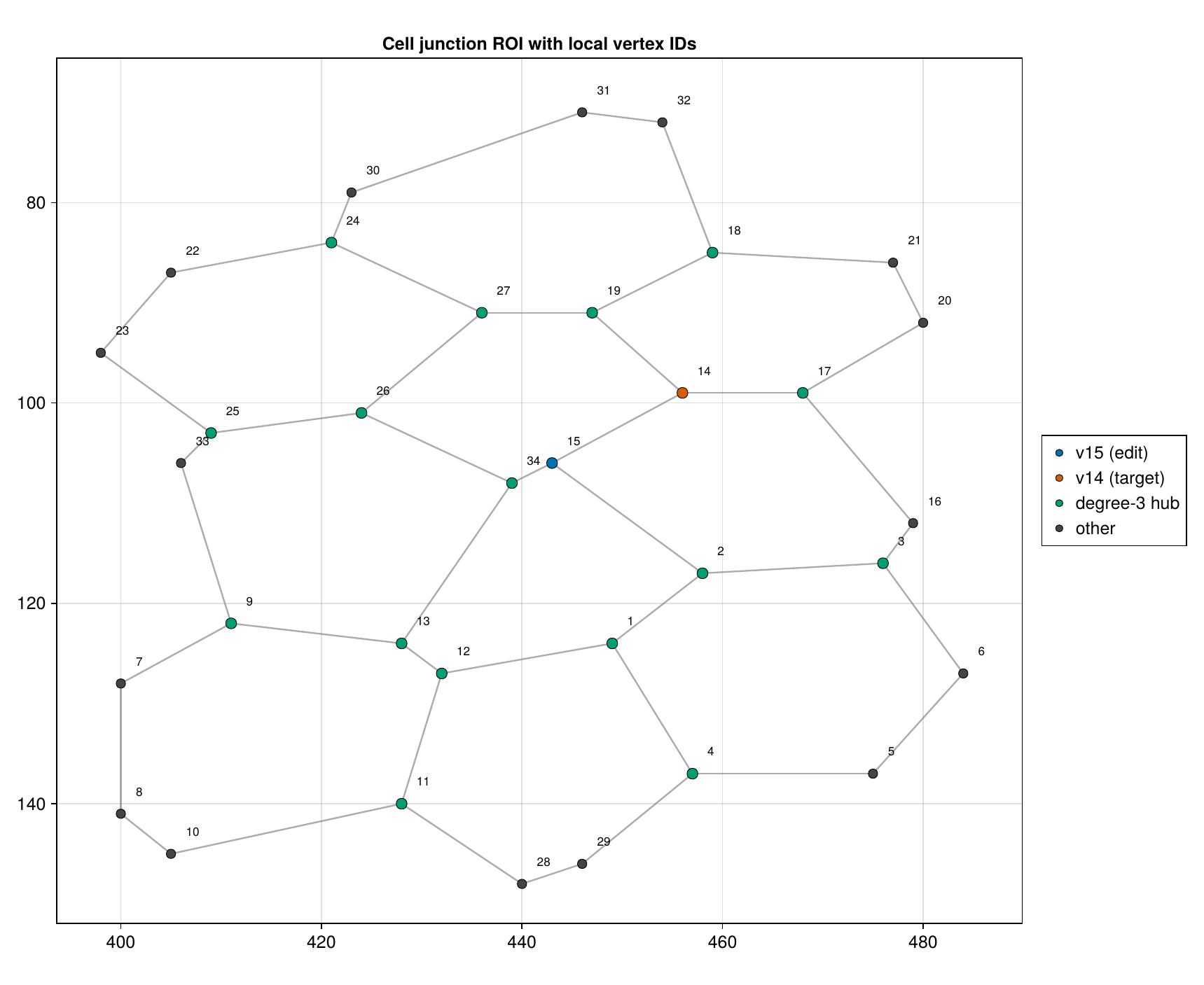}
\end{minipage}
\caption{Real-biological substrate setups used in Section \ref{sec-results-real-geometry}. Left: mitochondrial skeleton with the edited hub $v_{22}$ and target $v_{31}$ highlighted. Right: TissueMiner-derived junction ROI with edited junction $v_{15}$ and target $v_{14}$ highlighted. These are the geometry-level edit sites used for the controlled pairwise trajectories in Figure \ref{fig:real-bio-dynamics}.}
\label{fig:real-bio-setup}
\end{figure}

\subsection{Retinal Multiplex Construction and Neighbourhood Radius}
In this experiment we test how \ac{ROSA} can work on multiplex graphs where the edges per layer are not from the same metric. 
In previous multiplex scenarios the edges per layer were all coming from spatial distances or similarity, but never a mixture. 
Here we use retinal vasculature to check whether a stored-edge multiplex representation can expose edge-attribute patterns through skeleton path length and a caliber-derived flow proxy.
Second, we use this to test the effect of $k$ in \ac{JCP}.
For multiplex graphs $k$ can potentially unlock cross-layer neighbourhoods, combining patterns across layers in a way that $k=1$ cannot.

We use all ten good-quality healthy-older image/mask pairs from the TREND retinal fundus database \cite{popovic2021trenddata} as an inspected real-data construction check. The supplied manual binary vessel annotations are skeletonized in Julia with \texttt{SkeletonExtractor.jl}; TREND supplies the fundus images and annotations, not skeleton graphs. The extracted graphs contain 167--277 vertices and 182--309 edges. Each traced branch carries two raw edge measurements: its skeleton-path length $L_e$ in pixels and a caliber proxy $C_e=r_e^4$, where $r_e$ is the median vessel-mask distance-transform value over branch-interior skeleton pixels after excluding the two pixels nearest each incident vertex. The quantity $C_e$ is width-derived and flow-relevant under the Poiseuille scaling, but it is neither an observed flow measurement nor the full conductance $r_e^4/L_e$.

The multiplex graph contains two copies of the extracted topology, one weighted by $C_e$ and one by $L_e$, together with identity edges joining corresponding vertices. For each layer, raw weights are standardized using the mean and standard deviation fitted to the unedited graph. A single common translation then makes the minimum baseline intra-layer weight $10^{-2}$, and each identity edge is assigned weight $10^{-2}/3$; the same fitted transformation is applied after editing. Spectra and \ac{JCP} scores are computed from these stored semantic weights rather than coordinate-induced Euclidean weights. For a fixed edge with nonnegative weights, increasing $k$ in Equation \ref{eq:jcp-score} leaves its numerator unchanged and cannot decrease its denominator, so the raw score is non-increasing; distinct edges may decrease at different rates, so their filtration ordering and the resulting \ac{ROSA} sum need not be monotone in $k$.

Three noiseless comparisons are evaluated: a $25\%$ increase of one caliber-proxy edge, a $25\%$ increase of one branch-length edge, and both layer-specific increases together. To avoid selecting either terminal branches or ambiguous degree-four crossings, candidate edited edges are restricted to segments joining two degree-three junctions; within this class, each layer selects the largest raw attribute independently for each image. Panels A and B show representative image N10C\_L, whose selected edges are $(v_{27},v_{28})$ for the caliber proxy and $(v_{66},v_{104})$ for branch length. The filtration is bounded to 64 removals and reports raw $\mathrm{ROSA}_{\mathrm{sum}}$ for $\mathrm{JCP}_k$ at $k \in \{1,2,4,8,16,64\}$. Across the ten images, the mean caliber-proxy response is largest at $k=1$ and the mean branch-length response at $k=4$; the combined-edit response is largest at $k=1$ with a second elevated response at $k=4$. Descriptive Student-$t$ intervals report across-image variability, while leave-one-image-out inspection preserves the single-layer peak radii. Consistent with the fixed-edge property described in Section \ref{sec-jacobian}, $k$ can influence the response range for locally disjoint edits by changing how multiplex edge-attribute patterns enter the filtration; this is a construction and radius-sensitivity result, not a comparative-performance or pathology-detection claim.

Domain-specific knowledge about the expected attribute pattern is likely needed to exploit this dependence on graph construction. As expected, editing both layers causes a larger response, but the contribution is not necessarily additive.

\begin{figure}[htbp]
\centering
\includegraphics[width=\textwidth]{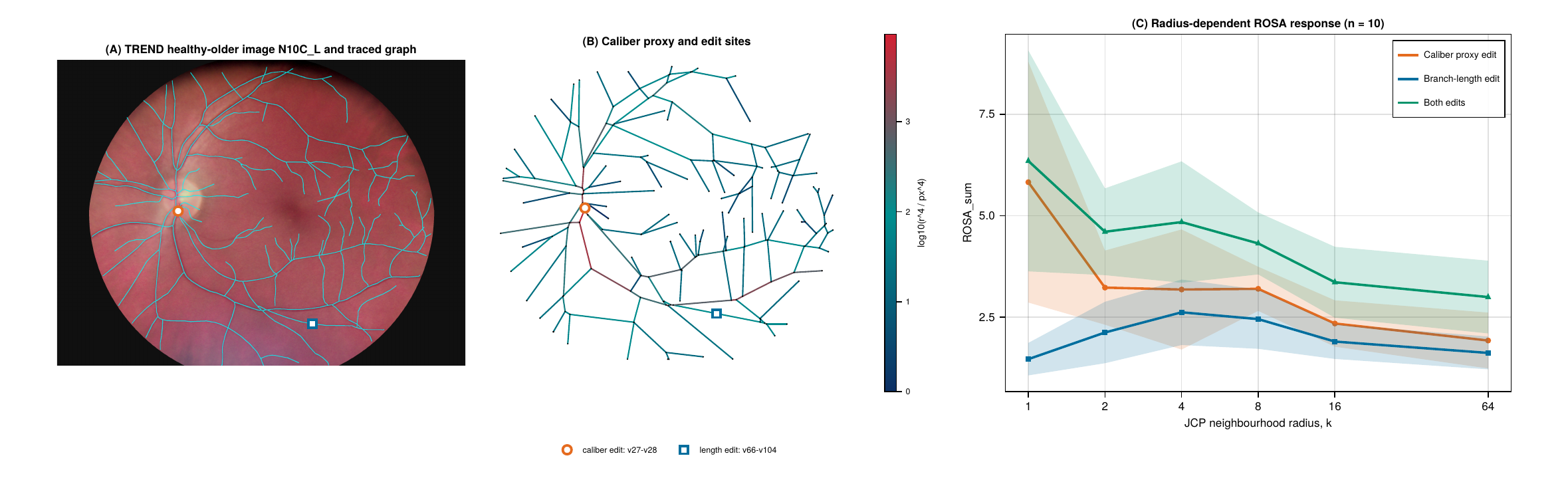}
\caption{Retinal multiplex construction and $\mathrm{JCP}_k$ sensitivity on ten TREND healthy-older fundus images. (A) Representative TREND image N10C\_L with graph traces extracted from the supplied manual vessel annotation. (B) Its extracted topology colored by $\log_{10}(r^4)$ for display of the raw caliber proxy's dynamic range; open circle and square markers identify its independently selected caliber-proxy edge $(v_{27},v_{28})$ and branch-length edge $(v_{66},v_{104})$, respectively. Computation uses the normalized raw layer attributes described in the text, not this display transform. (C) Mean raw $\mathrm{ROSA}_{\mathrm{sum}}$ with descriptive Student-$t$ 95\% intervals over ten images for noiseless $25\%$ layer-specific edits, evaluated across neighbourhood radii $k$; each image uses its own selected edge or edges. The panel evaluates the stored-edge multiplex representation and radius sensitivity; it does not compare \ac{ROSA} with a baseline distance or make a pathology-detection claim.}
\label{fig:retina-multiplex-jcp}
\end{figure}

\section{Figure Provenance Audit}\label{sec-figure-provenance}

Table \ref{tab:figure-provenance} records the paper-facing figure assets, the reproduction entry points, and the committed data artifacts used to regenerate or audit each figure.

{\scriptsize
\setlength{\tabcolsep}{3pt}
\begin{longtable}{@{}>{\raggedright\arraybackslash}p{0.12\textwidth} >{\raggedright\arraybackslash}p{0.22\textwidth} >{\raggedright\arraybackslash}p{0.25\textwidth} >{\raggedright\arraybackslash}p{0.37\textwidth}@{}}
\caption{Figure provenance and regeneration audit for the paper-facing figures. ``CSV plot'' means the plotting script reads committed CSV or TeX artifacts. ``Coupled'' means the current generator recomputes the experiment and writes the CSV during the same run.}
\label{tab:figure-provenance}\\
\hline
Figure & Paper asset(s) & Reproduction entry point & Data/regeneration artifacts \\
\hline
\endfirsthead
\hline
Figure & Paper asset(s) & Reproduction entry point & Data/regeneration artifacts \\
\hline
\endhead
\hline
\endfoot
Figure \ref{fig:heatmaps}, row 1 & \path{figures/figure_1_panel_1.pdf} & \path{reproduction/figure_1/panel_1/generate.jl}; wrapper \path{reproduction/figure_1/generate.jl} & CSV plot with \texttt{--from-csv} from \path{reproduction/figure_1/panel_1/figure_1_panel_1_raw.csv}; full coupled mode also writes that CSV. \\
Figure \ref{fig:heatmaps}, row 2 & \path{figures/figure_1_panel_2.pdf} & \path{reproduction/figure_1/panel_2/generate.jl}; wrapper \path{reproduction/figure_1/generate.jl} & CSV plot with \texttt{--from-csv} from \path{reproduction/figure_1/panel_2/figure_1_panel_2_raw.csv}; full coupled mode also writes that CSV. \\
Figure \ref{fig:graph-diversity} & \path{figures/figure_2.pdf} & \path{reproduction/figure_2/generate.jl} & CSV plot with \texttt{--from-csv} from \path{reproduction/figure_2/figure_2_raw.csv}; full coupled mode also writes that CSV. \\
Figure \ref{fig:operator-comparison} & \path{figures/figure_3.pdf} & \path{reproduction/figure_3/generate.jl} & CSV plot with \texttt{--from-csv} from \path{reproduction/figure_3/figure_3_raw.csv}; full coupled mode also writes that CSV. \\
Figure \ref{fig:scaling}, A--I & \path{figures/figure_5_panel_a.pdf}, \path{figures/figure_5_panel_b.pdf}, \path{figures/figure_5_panel_c.pdf}, \path{figures/figure_5_panel_d.pdf}, \path{figures/figure_5_panel_efg.pdf}, \path{figures/figure_5_panel_h.pdf}, \path{figures/figure_5_panel_i.pdf} & \path{reproduction/figure_5/generate.jl} plus panel scripts in \path{reproduction/figure_5/} & Mixed: CSV plot for scaling, stride, incremental, spectral-localization, and sparse-eigensolver panels using \path{fig6_scaling_raw.csv}, \path{fig6_stride_raw.csv}, \path{fig6_incremental_raw.csv}, \path{fig6_row3_data_N400_ci/*.csv}, and \path{fig6_row4_sparse_eigensolver_data/*.csv}. \\
Figure \ref{fig:fmri-boundary}, A--C & \path{figures/figure_4.pdf} & \path{reproduction/figure_4/generate.jl}; source scripts in \path{reproduction/fMRI/} & CSV plot from \path{reproduction/figure_4/fmri_heuristic_probe.csv} and \path{reproduction/figure_4/fmri_support_sweep_random_edit.csv}; source-generation scripts are \path{fmri_heuristic_probe.jl} and \path{fmri_support_sweep.jl}. \\
Figure \ref{fig:real-bio-dynamics}, rows A--D & \path{figures/figure_6.pdf} & \path{reproduction/figure_6/generate.jl}; row scripts in \path{panel_abc/} and \path{panel_de/} & Mixed: trajectory CSVs \path{trajectory_pairwise.csv}, \path{cell_pairwise.csv}; \acs{PDB} CSVs \path{pdb_single_sigma_aggregate.csv}, \path{pdb_layered_pair_summary.csv}; layered mito CSV \path{mito_k234_sweeps_summary.csv}. \\
Figure \ref{fig:sharp-boundary-setup} & \path{figures/FigS2_sharp_boundary_setup.pdf} & \path{reproduction/theorem_sharp_boundary/generate.jl} & Coupled symbolic/numeric generator; setup figure written as \path{theorem_sharp_boundary_setup.pdf}. \\
Figure \ref{fig:sharp-boundary-heatmap} & \path{figures/FigS3_sharp_boundary_heatmap.pdf} & \path{reproduction/theorem_sharp_boundary/generate.jl} & Coupled generator; writes \path{reproduction/theorem_sharp_boundary/gamma_grid.csv}. \\
Figure \ref{fig:is2-holdout} & \path{figures/FigA1_is2_holdout.pdf} & \path{reproduction/appendix_figure_1/generate.jl} & CSV plot from \path{holdout_summary.csv}, \path{holdout_bootstrap.csv}, and \path{holdout_realizations.csv}. \\
Figure \ref{fig:mst-sites} & \path{figures/FigS1_mst_sites.pdf} & \path{reproduction/figure_S1/generate.jl} & Coupled deterministic MSTA construction from \path{reproduction/common.jl}; no external CSV required. \\
Figure \ref{fig:real-bio-setup} & \path{figures/FigS4_real_bio_setup_mito.pdf}, \path{figures/FigS4_real_bio_setup_cell.pdf} & \path{reproduction/trajectory_experiment/plot_mito_graph_ids.jl} and \path{reproduction/trajectory_experiment/plot_cell_graph_ids.jl} & Real-data geometry setup plots from \path{trajectory_geometry_audit.csv}, \path{cell_geometry_audit.csv}, and graph JSON inputs in \path{reproduction/trajectory_experiment/}. \\
Figure \ref{fig:retina-multiplex-jcp} & \path{figures/FigS5_retina_multiplex_trend.pdf} & \path{reproduction/retina_multiplex/generate_figure.jl} & CSV plot from \path{trend_multiplex_rosa_summary.csv} and \path{trend_multiplex_rosa_raw.csv}; image/graph inspection artifacts in \path{reproduction/retina_multiplex/inspection/}. \\
\end{longtable}
}

\bibliography{references.bib}

\end{document}